\newtheorem{theorem}{Theorem}[section]
\newtheorem{lemma}[theorem]{Lemma}
\newtheorem{proposition}[theorem]{Proposition}
\newtheorem{corollary}[theorem]{Corollary}
\newtheorem{claim}[theorem]{Claim}
\theoremstyle{definition}
\newtheorem{definition}[theorem]{Definition}
\newtheorem{example}[theorem]{Example}
\newtheorem{remark}[theorem]{Remark}
\newenvironment{proofclaim}{\paragraph{\emph{Proof of the Claim}.}}{\hfill$\qed$\\}
\def\bal{\boldsymbol{\mathit{ba}\ell}}
\def\ubal{\boldsymbol{\mathit{uba}\ell}}
\def\dbal{\boldsymbol{\mathit{dba}\ell}}
\def\balg{\boldsymbol{\mathit{balg}}}
\def\basic{\boldsymbol{\mathit{basic}}}
\def\ubasic{\boldsymbol{\mathit{ubasic}}}
\def\mbasic{\boldsymbol{\mathit{mbasic}}}
\newcommand\lcbasic{\boldsymbol{\mathit{lcbasic}}}
\newcommand\nbasic{\boldsymbol{\mathit{nbasic}}}
\newcommand\cbasic{\boldsymbol{\mathit{cbasic}}}
\newcommand\lbasic{\boldsymbol{\mathit{lbasic}}}
\def\Set{{\sf Set}}
\newcommand{\func}[1]{\operatorname{#1}}
\def\int{{\sf int}}
\newcommand\cl{{\sf cl}}
\def\Id{\func{Id}}
\newcommand\coz{\func{coz}}
\newcommand\C{{\sf Comp}}
\newcommand\creg{{\sf CReg}}
\newcommand\norm{{\sf Norm}}
\newcommand\KHaus{{\sf KHaus}}
\newcommand\LKHaus{{\sf LKHaus}}
\newcommand\Lind{{\sf Lind}}
\begin{document}

\title[Gelfand-Naimark-Stone duality for normal spaces and insertion theorems]{Gelfand-Naimark-Stone duality for normal spaces and insertion theorems}
\author{G.~Bezhanishvili, P.~J.~Morandi, B.~Olberding}
\date{}

\subjclass[2010]{54D15; 54D20; 54D45; 54C30; 06F25; 13J25}
\keywords{Normal space; Lindel\"of space, locally compact space; continuous real-valued function; upper and lower semicontinuous functions;
$\ell$-algebra}

\begin{abstract}
Gelfand-Naimark-Stone duality provides an algebraic counterpart of compact Hausdorff spaces in the form of uniformly complete bounded archimedean $\ell$-algebras. In \cite{BMO18d} we extended this duality to completely regular spaces. In this article we use this extension to characterize normal, Lind\"{e}lof, and locally compact Hausdorff spaces. Our approach gives a different perspective on the classical theorems of Kat\v{e}tov-Tong and Stone-Weierstrass.
\end{abstract}

\maketitle

\section{Introduction}

Gelfand-Naimark-Stone duality provides a dual equivalence between the category $\KHaus$ of compact Hausdorff spaces and  the category $\ubal$ of uniformly complete bounded archimedean $\ell$-algebras, thus providing an algebraic interpretation of $\KHaus$.   In the article \cite{BMO18d}, we lifted this duality from $\KHaus$ to the category $\creg$ of completely regular spaces by identifying the category $\ubal$ with a subcategory of the category of basic extensions of $\ell$-algebras (see Definition~\ref{def: basic}) and finding among basic extensions those that are maximal in an appropriate sense (see Definition~\ref{def: maximal}). Roughly, this lift involves working with the traditional Gelfand-Naimark-Stone duality at one end of the basic extension and a ring-theoretic version of Tarski duality at the other, the latter taking the form of a duality between sets and certain Dedekind complete $\ell$-algebras, which we call basic algebras.  Such maximal basic extensions then are the algebraic counterparts of completely regular spaces.   Basic extensions consist of a monomorphism $A \rightarrow B$ in the category $\bal$ of bounded archimedean $\ell$-algebras, where $B$ is a basic algebra and the image of $A$ is join-meet dense in $B$.  The point of view in \cite{BMO18d} suggests that subcategories of $\creg$ other than $\KHaus$ should be identifiable with appropriate subcategories of the category of maximal basic extensions, and that topological features of classes of completely regular spaces should be reflected in algebraic properties of maximal basic extensions $A \rightarrow B$.  
  
In this article we show that this is indeed the case for the categories of normal spaces, Lindel\"of spaces, and locally compact Hausdorff spaces. For each such category, we axiomatize the corresponding basic extensions and thus are able to give algebraic counterparts for these spaces in the spirit of Gelfand-Naimark-Stone duality.  Our approach to normal spaces depends on well-known insertion theorems of continuous real-valued functions. We show these insertion theorems translate in a natural way to purely algebraic setting of basic extensions and can thus be used to characterize the basic extensions that correspond to normal spaces. In the setting of compact Hausdorff spaces, our approach yields a different perspective on  Gelfand-Naimark-Stone duality and provides an alternative proof of a version of the Stone-Weierstrass theorem.

This article is organized as follows. In Section 2 we recall the necessary background from \cite{BMO18d}. In Section 3 we connect normality and various insertion theorems. In Section 4 we provide characterizations of compact and Lindel\"{o}f spaces. Finally, in Section 5 we characterize locally compact Hausdorff spaces and describe one-point compactifications by means of minimal extensions.
  
\section{Preliminaries}

Gelfand-Naimark-Stone duality, which  
 states that the category of compact Hausdorff spaces is dually equivalent to the category of uniformly
complete bounded archimedean $\ell$-algebras, can be viewed as the restriction of a dual adjunction between the category $\creg$ of completely regular spaces and the category $\bal$ of bounded archimedean $\ell$-algebras.  This  adjunction   does not restrict to an equivalence between $\creg$  and any full subcategory of $\bal$. Thus, in seeking a duality for completely regular spaces that extends Gelfand-Naimark-Stone duality, one must look beyond the category $\bal$. One way to do this is by introducing the notion of a basic extension of a bounded archimedean $\ell$-algebra as done in \cite{BMO18d}. In this section we first describe the adjunction between $\creg$ and $\bal$ and then review the  notions of basic algebras and basic extensions from \cite{BMO18d}.

 To make the paper self-contained, we recall several definitions.
\begin{itemize}
\item A ring $A$ with a partial order $\le$ is an \emph{$\ell$-ring} (that is, a \emph{lattice-ordered ring}) if $(A,\le)$ is a lattice, $a\le b$ implies
$a+c \le b+c$ for each $c$, and $0 \leq a, b$ implies $0 \le ab$.
\item An $\ell$-ring $A$ is \emph{bounded} if for each $a \in A$ there is $n \in \mathbb{N}$ such that $a \le n\cdot 1$ (that is, $1$ is
a \emph{strong order unit}).
\item An $\ell$-ring $A$ is \emph{archimedean} if for each $a,b \in A$, whenever $na \le b$ for each $n \in \mathbb{N}$, then $a \le 0$.
\item An $\ell$-ring $A$ is an \emph{$\ell$-algebra} if it is an $\mathbb R$-algebra and for each $0 \le a\in A$ and $0\le r\in\mathbb R$ we
have $0 \le ra$.
\item Let $\bal$ be the category of bounded archimedean $\ell$-algebras and unital $\ell$-algebra homomorphisms.
\item Let $A\in\bal$. For $a\in A$, define the \emph{absolute value} of $a$ by $|a|=a\vee(-a)$ and the \emph{norm} of $a$ by
$||a||=\inf\{r\in\mathbb R : |a|\le r\}$.\footnote{We view $\mathbb{R}$ as an $\ell$-subalgebra of $A$ by identifying $r$ with $r\cdot 1$.} Then $A$ is \emph{uniformly complete} if the norm is complete.
\item Let $\ubal$ be the full subcategory of $\bal$ consisting of uniformly complete $\ell$-algebras.
\end{itemize}

\subsection{The contravariant functor $C^*:\creg\to\bal$}

For a completely regular space $X$, let $C(X)$ be the ring of continuous real-valued functions, and let $C^*(X)$ be the subring of $C(X)$
consisting of bounded functions. We note that if $X$ is compact, then $C^*(X)=C(X)$. There is a natural partial order $\le$ on $C(X)$
lifted from $\mathbb R$, making $C(X)$ an archimedean $\ell$-algebra. Since $C^*(X)$ is bounded, we then have $C^*(X)\in\bal$. Moreover,
there is a natural norm on $C^*(X)$ given by $||f||=\sup\{|f(x)| : x\in X\}$, which is complete. Thus, $C^*(X)\in\ubal$.

For a continuous map $\varphi:X\to Y$ between completely regular spaces, let $\varphi^* : C^*(Y)\to C^*(X)$ be given by
$\varphi^*(f) = f \circ \varphi$. Then $\varphi^*$ is a unital $\ell$-algebra homomorphism. This yields a contravariant functor
$C^* : \creg \to \bal$ which sends each $X\in\creg$ to the uniformly complete $\ell$-algebra $C^*(X)$, and each continuous map
$\varphi:X\to Y$ to the unital $\ell$-algebra homomorphism $\varphi^* : C^*(Y)\to C^*(X)$. We denote the restriction of $C^*$ to
$\KHaus$ by $C$ since for $X\in\KHaus$ we have $C^*(X)=C(X)$.

\subsection{The contravariant functor $Y:\bal\to\creg$}

For $A\in\bal$, we recall that an ideal $I$ of $A$ is an \emph{$\ell$-ideal} if
$|a|\le|b|$ and $b\in I$ imply $a\in I$, and that $\ell$-ideals are exactly the kernels of $\ell$-algebra homomorphisms. Let $Y_A$ be
the space of maximal $\ell$-ideals of $A$, whose closed sets are exactly sets of the form
\[
Z_\ell(I) = \{M\in Y_A\mid I\subseteq M\},
\]
where $I$ is an $\ell$-ideal of $A$. The space $Y_A$ is often referred to as the \emph{Yosida space} of $A$, and it is well known that
$Y_A\in\KHaus$.

For a unital $\ell$-algebra homomorphism $\alpha:A\to B$ let $\alpha_*:Y_B\to Y_A$ be the continuous map given by $\alpha_*(M)=\alpha^{-1}(M)$.
This defines a contravariant functor $Y:\bal\to\creg$ which sends each $A\in\bal$ to the compact Hausdorff space $Y_A$, and each unital
$\ell$-algebra homomorphism $\alpha:A\to B$ to the continuous map $\alpha_*:Y_B\to Y_A$.

\subsection{Dual adjunction and equivalence}

The functors $C^* : \creg \to \bal$ and $Y: \bal \to \creg$ yield a contravariant adjunction between $\bal$ and $\creg$. To see this, for $A \in \bal$ and a maximal $\ell$-ideal $M$ of $A$, we have a unique unital $\mathbb{R}$-algebra isomorphism $A/M\to\mathbb R$. Therefore, with each $a\in A$,
we can associate $f_a\in C(Y_A)$ given by $f_a(M)=a+M$. Then $\zeta_A:A\to C(Y_A)$ given by $\zeta_A(a)=f_a$ is a unital $\ell$-algebra
homomorphism, which is a monomorphism since the intersection of maximal $\ell$-ideals is 0. 
Since the $\ell$-subalgebra $\zeta_A[A]$ of $C(Y_A)$ separates points of $Y_A$ and $C(Y_A)$ is uniformly complete, $C(Y_A)$ is the uniform completion of $\zeta_A[A]$ by the Stone-Weierstrass theorem. From now on we will view $C(Y_A)$ as the uniform completion $\widehat{A}$ of $A$.
It is then clear that $A$ is uniformly complete iff $\zeta_A$ is an isomorphism.

For $X\in\creg$, associate to each $x\in X$ the maximal $\ell$-ideal
\[
M_x := \{ f \in C^*(X) \mid f(x) = 0 \}.
\]
Then $\xi_X:X\to Y_{C^*(X)}$ given by $\xi_X(x)=M_x$ is a topological embedding, and it is a homeomorphism iff $X$ is compact.

This yields a natural bijection
\[
\mathrm{hom}_{\bal}(A,C^*(X))\simeq\mathrm{hom}_{\creg}(X,Y_A)
\]
which can be described as follows. Let $A\in\bal$ and $X\in\creg$. To each morphism $ \alpha : A \to C^*(X)$ in $\bal$, associate the continuous map $\alpha_* \circ \xi_X : X \to Y_A$. Going the other direction, to each continuous map $\varphi : X \to Y_A$ associate the morphism $\varphi^*\circ \zeta_A : A \to C^*(X)$ in $\bal$. 
This gives a contravariant adjunction $(C^*, Y)$ between $\bal$ and $\creg$. 
The image of $C^*$ is $\ubal$ and the image of $Y$ is $\KHaus$. Consequently, the contravariant adjunction $(C^*,Y)$ between $\creg$ and $\bal$ restricts to a dual equivalence $(C,Y)$ between
$\KHaus$ and $\ubal$, and we arrive at the following celebrated result:

\begin{theorem} [Gelfand-Naimark-Stone duality]
The categories $\KHaus$ and $\ubal$ are dually equivalent, and the dual equivalence is established by the functors $C$ and $Y$.
\label{thm:ubal}
\end{theorem}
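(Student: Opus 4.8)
The plan is to obtain the dual equivalence directly from the contravariant adjunction $(C^*, Y)$ already established, by restricting it to the full subcategories on which its unit and counit become isomorphisms. Recall the general categorical fact that any contravariant adjunction between categories $\mathcal{C}$ and $\mathcal{D}$ restricts to a dual equivalence between the full subcategory of $\mathcal{C}$ on those objects where the unit is an isomorphism and the full subcategory of $\mathcal{D}$ on those objects where the counit is an isomorphism. For the adjunction coming from the bijection $\mathrm{hom}_{\bal}(A, C^*(X)) \simeq \mathrm{hom}_{\creg}(X, Y_A)$, the unit is the family $\xi_X : X \to Y_{C^*(X)}$ (obtained by transporting $\mathrm{id}_{C^*(X)}$) and the counit is the family $\zeta_A : A \to C(Y_A) = \widehat{A}$ (obtained by transporting $\mathrm{id}_{Y_A}$, using the identification of $C(Y_A)$ with the uniform completion of $A$).

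First I would confirm that the two functors restrict to the candidate subcategories. On the space side, for every $A \in \bal$ the Yosida space $Y_A$ is compact Hausdorff, so $Y$ already lands in $\KHaus$; on the algebra side, for every $X \in \creg$ the algebra $C^*(X)$ is uniformly complete, so $C^*$ already lands in $\ubal$. In particular, for $X \in \KHaus$ we have $C^*(X) = C(X) \in \ubal$, so the restriction $C$ is well defined, and for $A \in \ubal$ the space $Y_A$ lies in $\KHaus$, so $Y$ restricts as claimed. This matches the already-recorded statement that the image of $C^*$ is $\ubal$ and the image of $Y$ is $\KHaus$.

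Next I would invoke the two facts established in the preliminaries: $\zeta_A$ is an isomorphism precisely when $A$ is uniformly complete, i.e. when $A \in \ubal$, and $\xi_X$ is a homeomorphism precisely when $X$ is compact, i.e. when $X \in \KHaus$. Hence $\ubal$ is exactly the full subcategory of $\bal$ on which the counit is an isomorphism and $\KHaus$ is exactly the full subcategory of $\creg$ on which the unit is an isomorphism. Restricting the adjunction to these two subcategories then yields the dual equivalence $(C, Y)$ between $\KHaus$ and $\ubal$, with the restrictions of $\xi$ and $\zeta$ furnishing the required natural isomorphisms.

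The only genuinely technical points, and where I would expect to spend most of the formal effort, are the naturality of $\xi$ and $\zeta$ and the triangle identities relating them, which guarantee that these families really are the unit and counit of the adjunction rather than merely pointwise isomorphisms; once these are in place the passage from ``unit and counit are isomorphisms on the subcategories'' to ``dual equivalence'' is purely formal. The substantive analytic input — that $\zeta_A$ is an isomorphism exactly in the uniformly complete case and that $\xi_X$ is a homeomorphism exactly in the compact case — rests on the Stone-Weierstrass theorem together with the structure of maximal $\ell$-ideals (giving $A/M \cong \mathbb{R}$), both of which are already available from the discussion preceding the statement.
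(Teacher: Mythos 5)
Your proposal is correct and follows essentially the same route as the paper: the text establishes the contravariant adjunction $(C^*,Y)$ between $\creg$ and $\bal$, records that $\zeta_A$ is an isomorphism iff $A\in\ubal$ and $\xi_X$ is a homeomorphism iff $X\in\KHaus$, and obtains the duality precisely by restricting the adjunction to these full subcategories. Your added remarks on the unit/counit formalism and the triangle identities simply make explicit the categorical bookkeeping the paper leaves implicit.
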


\subsection{Dedekind complete algebras and basic algebras}
Our extension of  Gelfand-Naimark-Stone duality to completely regular spaces in \cite{BMO18d} depends on the notion of a basic extension, which in turn depends on that of  a basic algebra. These basic algebras are special cases of {\it Dedekind complete algebras},   
those  $A\in\bal$ with the property that each subset of $A$ bounded above has a least upper bound, and hence each
subset bounded below has a greatest lower bound. Let $\dbal$ be the full subcategory of $\bal$ consisting of Dedekind complete $\ell$-algebras. It is well known (see, e.g., \cite[Rem.~3.5]{BMO13b}) that if $A \in \bal$ is Dedekind complete, then $A$ is uniformly complete, so $\dbal$ is a full subcategory of $\ubal$.

We also recall that a topological space is \emph{extremally disconnected} if the closure of each open set is open.
Let $\sf ED$ be the full subcategory of $\KHaus$ consisting of extremally disconnected spaces. By the Stone-Nakano theorem, for $X\in\KHaus$ we have $C(X)$ is Dedekind complete iff $X\in\sf ED$. This together with Gelfand-Naimark-Stone duality yields:

\begin{theorem} \label{cor:dbal}
The categories $\sf ED$ and $\dbal$ are dually equivalent, and the dual equivalence is established by restricting the functors
$C$ and $Y$.
\end{theorem}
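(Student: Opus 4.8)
```latex
The plan is to derive Theorem~\ref{cor:dbal} as a direct restriction of the already-established Gelfand-Naimark-Stone duality (Theorem~\ref{thm:ubal}) together with the Stone-Nakano theorem cited just above the statement. Since we already know that $C$ and $Y$ establish a dual equivalence between $\KHaus$ and $\ubal$, and since $\dbal$ sits as a full subcategory of $\ubal$ while $\sf ED$ sits as a full subcategory of $\KHaus$, the entire task reduces to checking that these two functors carry one subcategory onto the other. Concretely, I would show that the functor $Y$ restricts to a functor $\sf ED \to \dbal$ and that $C$ restricts to a functor $\dbal \to \sf ED$, and that the unit and counit natural isomorphisms from Theorem~\ref{thm:ubal} restrict appropriately.

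First I would verify that $C$ sends $\sf ED$ into $\dbal$. Let $X \in \sf ED$. By the Stone-Nakano theorem, $C(X)$ is Dedekind complete, so $C(X) \in \dbal$; this is immediate from the statement quoted before the theorem. Hence the restriction of $C$ takes objects of $\sf ED$ to objects of $\dbal$, and on morphisms it is simply the restriction of the functor $C$, which is already defined on all of $\KHaus$. Next I would verify that $Y$ sends $\dbal$ into $\sf ED$. Let $A \in \dbal$. Since $A$ is Dedekind complete it is in particular uniformly complete (by the remark cited from \cite{BMO13b}), so by Theorem~\ref{thm:ubal} the unit map $\zeta_A : A \to C(Y_A)$ is an isomorphism. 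Thus $C(Y_A) \cong A$ is Dedekind complete, and applying the Stone-Nakano theorem in the reverse direction forces $Y_A \in \sf ED$.

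With both restrictions in place, the natural isomorphisms witnessing the equivalence come for free. For $X \in \sf ED$ the counit $\xi_X : X \to Y_{C(X)}$ is a homeomorphism because $X$ is compact Hausdorff, and for $A \in \dbal$ the unit $\zeta_A : A \to C(Y_A)$ is an isomorphism because $A$ is uniformly complete; these are exactly the isomorphisms furnished by Theorem~\ref{thm:ubal}, now restricted to the relevant subcategories. Since $\sf ED$ and $\dbal$ are full subcategories, no morphisms are lost and naturality is inherited verbatim from the ambient equivalence. I do not anticipate a genuine obstacle here: the only substantive input is the Stone-Nakano characterization of when $C(X)$ is Dedekind complete, which is invoked as a known result, and the mild point that Dedekind completeness implies uniform completeness so that Theorem~\ref{thm:ubal} applies to every object of $\dbal$. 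The proof is therefore a routine verification that the equivalence of Theorem~\ref{thm:ubal} cuts down cleanly to the two full subcategories.
```
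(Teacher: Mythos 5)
Your proposal is correct and follows exactly the paper's (implicit) argument: the paper derives this theorem precisely by combining the Stone-Nakano theorem with Gelfand-Naimark-Stone duality (Theorem~\ref{thm:ubal}), which is what you spell out, including the point that Dedekind completeness implies uniform completeness so that $\zeta_A$ is an isomorphism and both functors restrict to the full subcategories $\sf ED$ and $\dbal$.
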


For $A\in\bal$, let $\Id(A)$ be the boolean algebra of idempotents. A nonzero idempotent $e$ is called \emph{primitive} if for
each $f \in \Id(A)$, from $f \leq e$ it follows that $f = 0$ or $f = e$. Thus, primitive idempotents are exactly the atoms of $\Id(A)$.

\begin{definition}
\begin{enumerate}
\item[]
\item We call $A\in\bal$ a \emph{basic algebra} if $A$ is Dedekind complete and $\Id(A)$ is atomic.
\item We call a unital $\ell$-algebra homomorphism $\alpha:A\to B$ between $A,B\in\dbal$ \emph{normal} if $\alpha$ preserves all
existing joins (and hence all existing meets).
\item Let $\balg$ be the category of basic algebras and normal $\ell$-algebra homomorphisms.
\end{enumerate}
\end{definition}

\begin{remark}
The category $\balg$ is not a full subcategory of $\bal$ since not every unital $\ell$-algebra homomorphism between basic algebras is normal.
\end{remark}

As we recall in Theorem~\ref{Tarski}, the category $\balg$ is dually equivalent to the category $\Set$. This suggests an analogy between Tarski duality and the duality of Theorem~\ref{Tarski}. In fact,
the objects in $\balg$ admit a functional representation that resembles in several ways  the representation of complete and atomic Boolean algebras as powersets.

In more detail, for each set $X$, let $B(X)$ be the $\ell$-algebra of bounded real-valued functions on $X$. Then $B(X)\in\balg$. For a function
$\varphi:X\to Y$ between two sets, let $\varphi^+:B(Y)\to B(X)$ be given by $\varphi^+(f)=f\circ\varphi$. Then $\varphi^+$
is a normal $\ell$-algebra homomorphism. This defines a contravariant functor $B:{\sf Set}\to\balg$ from the category $\sf Set$
of sets to the category $\balg$ of basic algebras.

\begin{remark}
If $X,Y \in \creg$ and $\varphi : X \to Y$ is continuous, then $\varphi^*$ is the restriction of $\varphi^+$ to $C^*(Y)$.
\end{remark}

For $A\in\bal$, let $X_A$ be the set of isolated points of $Y_A$. If $A \in \balg$, then $A$ is Dedekind complete, so $Y_A$ is extremally disconnected; and since $\Id(A)$ is atomic, $X_A$ is dense in $Y_A$. For a normal $\ell$-algebra homomorphism $\alpha:A\to B$
between basic algebras, let $\alpha_+:X_B\to X_A$ be the restriction of $\alpha_*:Y_B\to Y_A$ to $X_B$. Then $\alpha_+$ is
a well-defined function, and we have a contravariant functor $X:\balg\to\sf Set$.

\begin{theorem} \cite[Thm.~3.10]{BMO18d} \label{Tarski}
The categories $\sf Set$ and $\balg$ are dually equivalent, and the dual equivalence is established by the functors $B$ and $X$.
\end{theorem}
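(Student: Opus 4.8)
The plan is to exhibit the contravariant functors $B$ and $X$ as establishing a dual equivalence by constructing natural isomorphisms $X\circ B\cong 1_{\Set}$ and $B\circ X\cong 1_{\balg}$. Since the excerpt already records that $B$ and $X$ are well-defined contravariant functors, the entire content lies in building these two natural isomorphisms and checking naturality. A useful general lemma to isolate first is the correspondence, for any $A\in\balg$, between the atoms of $\Id(A)$ and the isolated points of $Y_A$: idempotents correspond to clopen subsets of the (extremally disconnected) space $Y_A$, an atom corresponds to a minimal nonempty clopen set, and because $X_A$ is dense in $Y_A$ such a minimal clopen set must be a singleton $\{M\}$ with $M$ isolated (density produces an isolated point $p$ in it, $\{p\}$ is clopen, and minimality forces equality); conversely each isolated point gives a minimal clopen set, hence an atom $e_M$ with $f_{e_M}=\chi_{\{M\}}$.

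For a set $S$, the idempotents of $B(S)$ are exactly the characteristic functions of subsets of $S$, so $\Id(B(S))$ is isomorphic to the powerset Boolean algebra of $S$, and under this identification the atoms are the characteristic functions of singletons, corresponding bijectively to the points of $S$. Combining this with the lemma above yields a bijection $\eta_S\colon S\to X_{B(S)}$ sending $s$ to the maximal $\ell$-ideal $M_s=\{f\in B(S):f(s)=0\}$, which is the isolated point of $Y_{B(S)}$ attached to the atom $\chi_{\{s\}}$. Naturality in $S$ is a direct check of $\eta$ against the definitions of $\varphi^+$ and of $\alpha_+$ as the restriction of $\alpha_*$ to isolated points.

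For $A\in\balg$ and an isolated point $M\in X_A$ we have $A/M\cong\mathbb{R}$, so I define $\varepsilon_A\colon A\to B(X_A)$ by $\varepsilon_A(a)(M)=a+M$; equivalently $\varepsilon_A(a)$ is the restriction to $X_A$ of the Gelfand transform $f_a\in C(Y_A)$. Boundedness of $A$ puts this in $B(X_A)$, and $\varepsilon_A$ is a unital $\ell$-algebra homomorphism since each evaluation $a\mapsto a+M$ is. Injectivity follows from density: as $\Id(A)$ is atomic, $X_A$ is dense in $Y_A$, so if $f_a$ vanishes on $X_A$ it vanishes on all of $Y_A$ by continuity, whence $a=0$ because $\zeta_A$ is a monomorphism. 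Normality (preservation of existing joins) I would obtain from the local analysis below, where each value $a+M$ is read off a summand $Ae_M\cong\mathbb{R}$ in which joins are computed coordinatewise.

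The main obstacle is surjectivity of $\varepsilon_A$, and this is exactly where Dedekind completeness is indispensable. Given a bounded $g\colon X_A\to\mathbb{R}$, reduce via positive and negative parts and a constant shift to the case $0\le g\le r$, and set $a=\bigvee_{M\in X_A} g(M)\,e_M$, where $e_M$ is the atom corresponding to $M$; the family is bounded above by $r\cdot 1$, so the join exists in $A$. The delicate point is to prove the equality $f_a(M_0)=g(M_0)$ at each isolated point $M_0$, not merely the inequality $f_a(M_0)\ge g(M_0)$ that is immediate from $a\ge g(M_0)e_{M_0}$. For this I would use that each idempotent $e=e_{M_0}$ induces a decomposition $A\cong Ae\times A(1-e)$ in $\bal$ with $Ae\cong\mathbb{R}$ (since $e$ is an atom, $Ae$ is a Dedekind complete bounded archimedean $\ell$-algebra with only trivial idempotents, hence $\cong\mathbb{R}$), that joins in a product are computed coordinatewise, and that distinct atoms are orthogonal, so $e_M e_{M_0}=0$ for $M\neq M_0$. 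These force the $Ae$-component of $a$ to equal $g(M_0)e$, and reading off the value at $M_0$ gives $f_a(M_0)=g(M_0)$, proving $\varepsilon_A(a)=g$. Naturality of $\varepsilon$ and the verification that $\eta$ and $\varepsilon$ are mutually inverse on objects then complete the proof that $(B,X)$ is a dual equivalence.
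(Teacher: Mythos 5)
Your proposal is correct in outline and in its key steps, but note that this paper does not actually prove Theorem~\ref{Tarski}: the result is quoted from \cite[Thm.~3.10]{BMO18d}, and the paper only records the two natural transformations, namely $\vartheta_A=\kappa_A\circ\zeta_A\colon A\to B(X_A)$ (Yosida representation followed by restriction to the isolated points) and $\eta_X(x)=\{f\in B(X)\mid f(x)=0\}$. Your $\varepsilon_A$ and $\eta_S$ are exactly these maps, so at the level of the structure of the equivalence you agree with the source. Where your argument genuinely diverges is the surjectivity of $\varepsilon_A$: the route suggested by the paper's setup (and the standard one) is topological --- since $Y_A$ is extremally disconnected with dense set of isolated points, $Y_A$ is the Stone--\v{C}ech compactification of the discrete space $X_A$, so the restriction map $\kappa_A\colon C(Y_A)\to B(X_A)$ is an isomorphism because every bounded function on a discrete space extends continuously to its Stone--\v{C}ech compactification; combined with $\zeta_A$ being an isomorphism (Dedekind complete implies uniformly complete), surjectivity follows at once. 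You instead argue internally in $A$: form $a=\bigvee_{M}g(M)e_M$ using Dedekind completeness, then compute the value at each isolated point $M_0$ via the decomposition $A\cong Ae_{M_0}\times A(1-e_{M_0})$, orthogonality of distinct atoms, coordinatewise joins, and $Ae_{M_0}\cong\mathbb{R}$. This is a sound, purely algebraic substitute, arguably closer in spirit to the paper's program of algebraizing topological facts, whereas the topological route is shorter but leans on Stone--\v{C}ech machinery. Two small simplifications are available: first, normality of $\varepsilon_A$ need not come from your local analysis, since a bijective $\ell$-algebra homomorphism is an order isomorphism and hence preserves all existing joins, so normality is automatic once bijectivity is proved; second, in your atoms/isolated-points lemma you can avoid invoking density of $X_A$ (a fact the paper derives from atomicity, typically via this very correspondence): in a zero-dimensional compact Hausdorff space a minimal nonempty clopen set is automatically a singleton, because two distinct points can be separated by a clopen set, and this removes any appearance of circularity.
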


The isomorphism $\vartheta_A:A \rightarrow B(X_A)$ is defined by $\vartheta_A=\kappa_A\circ\zeta_A$, where
$\zeta_A : A \rightarrow C(Y_A)$ is the Yosida representation and $\kappa_A: C(Y_A) \rightarrow B(X_A)$ sends
$f \in C(Y_A)$ to its restriction to $X_A$.
\[
\begin{tikzcd}
A \arrow[rr, "\vartheta_A"] \arrow[dr, "\zeta_A"'] && B(Y_A) \\
& C(Y_A) \arrow[ru, "\kappa_A"']
\end{tikzcd}
\]
Also, the bijection $\eta_X:X\to X_{B(X)}$ is defined by $\eta_X(x)=\{f\in B(X)\mid f(x)=0\}$.

\begin{remark}
If $X \in \creg$, then $\eta_X \cap C^*(X) = M_x$.
\end{remark}

\subsection{Compactifications and basic extensions}
We now describe our extension of Gelfand-Naimark Stone duality to $\creg$. This duality is fundamental for the rest of the paper, which is devoted to working out how various classes of completely regular spaces can be axiomatized in terms of basic extensions. We first formulate our duality more generally in terms of the category of  compactifications.

We recall (see, e.g., \cite[Sec.~3.5]{Eng89}) that a \emph{compactification} of a completely regular space $X$ is a pair $(Y,e)$,
where $Y$ is a compact Hausdorff space and $e : X\to Y$ is a topological embedding such that the image $e[X]$ is dense in $Y$.
Suppose that $e : X \to Y$ and $e' : X \to Y'$ are compactifications. As usual, we write $e \le e'$ provided there is a continuous map
$f : Y' \to Y$ with $f \circ e' = e$.
\[
\begin{tikzcd}
X \arrow[r, "e'"] \arrow[rd, "e"'] & Y' \arrow[d, "f"] \\
& Y
\end{tikzcd}
\]
The relation $\le$ is reflexive and transitive. Two compactifications $e$ and $e'$ are said to be \emph{equivalent} if $e \le e'$ and $e' \le e$.
It is well known that $e$ and $e'$ are equivalent iff there is a homeomorphism $f : Y'\to Y$ with $f \circ e' = e$. The equivalence classes of
compactifications of $X$ form a poset whose largest element is the Stone-\v{C}ech compactification $s : X \to \beta X$.

\begin{definition}
Let $\C$ be the category whose objects are compactifications $e:X\to Y$ and whose morphisms are pairs $(f,g)$ of continuous maps such that
the following diagram commutes.
\[
\begin{tikzcd}[column sep=5pc]
X \arrow[r, "e"] \arrow[d, "f"'] & Y \arrow[d, "g"] \\
X' \arrow[r, "e'"'] & Y'
\end{tikzcd}
\]
The composition of two morphisms $(f_1,g_1)$ and $(f_2,g_2)$ is defined to be $(f_2\circ f_1, g_2\circ g_1)$.
\[
\begin{tikzcd}[column sep=5pc]
X_1 \arrow[r, "e_1"] \arrow[dd, bend right = 35, "f_2 \circ f_1"'] \arrow[d, "f_1"] & Y_1 \arrow[d, "g_1"']
\arrow[dd, bend left = 35, "g_2 \circ g_1"] \\
X_2 \arrow[r, "e_2"] \arrow[d, "f_2"] & Y_2 \arrow[d, "g_2"'] \\
X_3 \arrow[r, "e_3"'] & Y_3
\end{tikzcd}
\]
\end{definition}

\begin{definition} \label{def: basic}
\begin{enumerate}
\item[]
\item Let $\alpha:A\to B$ be a monomorphism in $\bal$. We say that $\alpha[A]$ is \emph{join-meet dense} in $B$
if each element of $B$ is a join of meets of elements of $\alpha[A]$.
\item Let $A\in\bal$, $B\in\balg$, and $\alpha : A \to B$ be a monomorphism in $\bal$. We call $\alpha : A \to B$ a
\emph{basic extension} if $\alpha[A]$ is join-meet dense\footnote{Equivalently, $\alpha[A]$ is meet-join dense in $B$, meaning that each element of $B$ is a meet of joins of elements of $\alpha[A]$. For details see \cite[Rem.~4.6(2)]{BMO18d}.} in $B$.
\item Let $\basic$ be the category whose objects are basic extensions and whose morphisms are pairs $(\rho, \sigma)$
of morphisms in $\bal$ with $\sigma$ normal and $\sigma\circ\alpha = \alpha' \circ \rho$.
\[
\begin{tikzcd}[column sep=5pc]
A \arrow[r, "\alpha"] \arrow[d, "\rho"'] & B \arrow[d, "\sigma"] \\
A' \arrow[r, "\alpha'"'] & B'
\end{tikzcd}
\]
The composition of two morphisms $(\rho_1, \sigma_1)$ and $(\rho_2, \sigma_2)$ is defined to be $(\rho_2\circ \rho_1, \sigma_2\circ \sigma_1)$.
\[
\begin{tikzcd}[column sep=5pc]
A_1 \arrow[dd, bend right = 35, "\rho_2 \circ \rho_1"'] \arrow[r, "\alpha_1"] \arrow[d, "\rho_1"] & B_1 \arrow[d, "\sigma_1"']
\arrow[dd, bend left=35, "\sigma_2 \circ \sigma_1"] \\
A_2 \arrow[r, "\alpha_2"] \arrow[d, "\rho_2"] & B_2 \arrow[d, "\sigma_2"'] \\
A_3 \arrow[r, "\alpha_3"'] & B_3
\end{tikzcd}
\]
\item Let $\ubasic$ be the full subcategory of $\basic$ consisting of the basic extensions $\alpha : A \to B$ where $A \in \ubal$.
\end{enumerate}
\end{definition}

By \cite[Thm.~4.10]{BMO18d}, $\ubasic$ is a reflective subcategory of $\basic$, and the reflector $r : \basic \to \ubasic$ is defined as follows. Let $\alpha : A \to B$ be a basic extension and $\widehat{A} := C(Y_A)$ the uniform completion of $A$. Then  $\zeta_A : A \to \widehat{A}$ is a monomorphism and $\zeta_B : B \to \widehat{B}$ is an isomorphism. Define $\widehat{\alpha} : \widehat{A} \to B$ to be $\widehat{\alpha} = \zeta_B^{-1} \circ (\alpha_*)^*$. The following diagram is commutative, and setting $r(\alpha) = \widehat{\alpha}$ is the desired reflector.
\[
\begin{tikzcd} [column sep=5pc]
A \arrow[r, "\alpha"] \arrow[d, "\zeta_A"'] & B \\
\widehat{A} \arrow[r, "(\alpha_*)^*"'] & \widehat{B} \arrow[u, "\zeta_B^{-1}"']
\end{tikzcd}
\]

Define a contravariant functor ${\sf E} : \C \to \basic$ as follows. If $e : X \to Y$ is a compactification, let $e^\flat : C(Y) \to B(X)$
be given by $e^\flat(f) = f\circ e$. By \cite[Prop.~4.4]{BMO18d}, $e^\flat$ is a basic extension and we define ${\sf E}(e)=e^\flat$.
For a morphism $(f,g)$ in $\C$
\[
\begin{tikzcd}[column sep = 5pc]
X \arrow[r, "e"] \arrow[d, "f"'] & Y \arrow[d, "g"] \\
X' \arrow[r, "e'"'] & Y'
\end{tikzcd}
\]
define ${\sf E}(f,g)$ to be the pair $(g^*, f^+)$
\[
\begin{tikzcd}[column sep = 5pc]
C(Y') \arrow[ r, "(e')^\flat"] \arrow[d, "g^*"'] & B(X') \arrow[d, "f^+"] \\
C(Y) \arrow[r, "e^\flat"'] & B(X)
\end{tikzcd}
\]

For a morphism $\alpha:A\to B$ in $\bal$, let $\alpha_\flat : X_B \to Y_A$ be the restriction of $\alpha_* : Y_B \to Y_A$ to $X_B$.
Let $\alpha : A \to B$ be a monomorphism in $\bal$ with $B$ a basic algebra. Define a topology $\tau_\alpha$ on $X_B$ as the least topology
making $\alpha_\flat:X_B \to Y_A$ continuous.
By \cite[Thm.~5.5]{BMO18d}, if $\alpha : A \to B$ is a basic extension, then $\alpha_\flat : X_B \to Y_A$ is a compactification.

Define a contravariant functor ${\sf C} : \basic \to \C$ as follows. If $\alpha : A \to B$ is a basic extension, set ${\sf C}(\alpha)$ to be
the compactification $\alpha_\flat : X_B \to Y_A$. For a morphism $(\rho, \sigma)$ in $\basic$
\[
\begin{tikzcd}[column sep = 5pc]
A \arrow[r, "\alpha"] \arrow[d, "\rho"'] & B \arrow[d, "\sigma"] \\
A' \arrow[r, "\alpha'"'] & B'
\end{tikzcd}
\]
define ${\sf C}(\rho, \sigma)$ to be $(\sigma_+, \rho_*)$.
\[
\begin{tikzcd}[column sep = 5pc]
X_{B'} \arrow[r, "\alpha'_\flat"] \arrow[d, "\sigma_+"'] & Y_{A'} \arrow[d, "\rho_*"] \\
X_B \arrow[r, "\alpha_\flat"'] & Y_A
\end{tikzcd}
\]

\begin{theorem} \cite[Thm.~6.3]{BMO18d}\label{duality cptf}
The functors $\sf E:\C \rightarrow \basic$ and $\sf C:\basic \rightarrow \C$ define a dual adjunction of categories that restricts to a
dual equivalence between $\C$ and $\ubasic$.
\end{theorem}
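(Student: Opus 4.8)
The plan is to establish the dual adjunction by exhibiting natural transformations between the composites $\sf E \circ \sf C$ and $\sf C \circ \sf E$ and the respective identity functors, and then to identify precisely when the unit and counit are isomorphisms, which pins down $\ubasic$ as the subcategory on which the adjunction restricts to an equivalence. The backbone of the argument is already assembled in the excerpt: we have the dual equivalence $(C,Y)$ between $\KHaus$ and $\ubal$ (Theorem~\ref{thm:ubal}), the dual equivalence $(B,X)$ between $\Set$ and $\balg$ (Theorem~\ref{Tarski}), and the fact (from \cite{BMO18d}) that $\ubasic$ is reflective in $\basic$ with reflector $r(\alpha)=\widehat\alpha$. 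So the real content is to glue the two dualities along the basic-extension structure and to track how the reflector mediates the failure of the adjunction to be an equivalence on all of $\basic$.

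First I would verify that $\sf E$ and $\sf C$ are genuinely well-defined contravariant functors, i.e.\ that they send objects to objects and morphisms to morphisms in a composition-respecting way. For $\sf E$ this means checking that $e^\flat$ is a basic extension (cited as \cite[Prop.~4.4]{BMO18d}) and that $\mathsf{E}(f,g)=(g^*,f^+)$ is a legitimate $\basic$-morphism—here the square commutes because $g\circ e=e'\circ f$ transposes under $(-)^*$ and $(-)^+$, and $f^+$ is normal since any map of sets induces a normal homomorphism by the discussion preceding Theorem~\ref{Tarski}. For $\sf C$ one uses \cite[Thm.~5.5]{BMO18d} to see $\alpha_\flat$ is a compactification and checks that $\mathsf{C}(\rho,\sigma)=(\sigma_+,\rho_*)$ is a $\C$-morphism, the commuting square being the transpose of $\sigma\circ\alpha=\alpha'\circ\rho$. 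Functoriality in both cases reduces to the contravariance of $C^*$, $Y$, $B$, $X$ already recorded.

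Next I would construct the unit and counit. For a compactification $e:X\to Y$, composing yields $\mathsf{C}(\mathsf{E}(e))=\mathsf{C}(e^\flat):X_{B(X)}\to Y_{C(Y)}$; using the bijection $\eta_X:X\to X_{B(X)}$ and the homeomorphism $\xi_Y:Y\to Y_{C(Y)}$ (which is a homeomorphism because $Y$ is compact), I would show the pair $(\eta_X,\xi_Y)$ is a $\C$-morphism $e\to\mathsf{C}\mathsf{E}(e)$, natural in $e$. For a basic extension $\alpha:A\to B$, the composite $\mathsf{E}(\mathsf{C}(\alpha))=(\alpha_\flat)^\flat:C(Y_A)\to B(X_B)$ receives a $\basic$-morphism from $\alpha$ via the pair $(\zeta_A,\vartheta_B)$, where $\zeta_A:A\to C(Y_A)=\widehat A$ is the Yosida map and $\vartheta_B:B\to B(X_B)$ is the isomorphism of Theorem~\ref{Tarski}; naturality follows from the naturality of $\zeta$ and $\vartheta$. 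The triangle identities then follow from those of the two underlying adjunctions $(C^*,Y)$ and $(B,X)$, checked componentwise on the two ends of each basic extension.

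The main obstacle, and the crux of the theorem, is the restriction claim: the counit component $(\zeta_A,\vartheta_B)$ is an isomorphism in $\basic$ exactly when $\zeta_A$ is an isomorphism, since $\vartheta_B$ is always an isomorphism and $\eta_X,\xi_Y$ are always bijective/homeomorphic on the $\C$ side. By the remark following Theorem~\ref{thm:ubal}, $\zeta_A$ is an isomorphism iff $A$ is uniformly complete, i.e.\ iff $\alpha\in\ubasic$. The delicate point is to confirm that on objects of $\ubasic$ the counit is not merely a pointwise isomorphism of algebras but an isomorphism in the category $\basic$—that is, that its inverse pair is again a $\basic$-morphism with normal second component; here I would invoke that $\vartheta_B^{-1}$ is normal (being the inverse of a normal isomorphism between Dedekind complete algebras) and that the square still commutes after inverting. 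On the $\C$ side the unit is always an isomorphism, so no restriction is needed there, which is consistent with the image of $\sf C$ landing in compactifications of completely regular spaces. Assembling these, the adjunction $(\sf E,\sf C)$ restricts to the asserted dual equivalence between $\C$ and $\ubasic$, with the reflector $r$ explaining how a general basic extension is replaced by its uniformly complete representative $\widehat\alpha$ before the equivalence takes hold.
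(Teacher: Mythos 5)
This theorem is not proved in the paper at all; it is imported verbatim from \cite[Thm.~6.3]{BMO18d}, so there is no internal argument to compare against, and your proposal must be judged on its own terms. So judged, it is correct and is the expected argument: the two unit maps $(\eta_X,\xi_Y)\colon e\to \mathsf{C}\mathsf{E}(e)$ and $(\zeta_A,\vartheta_B)\colon \alpha\to \mathsf{E}\mathsf{C}(\alpha)$ are the right ones (the squares commute since, e.g., $(e^\flat)_\flat(\eta_X(x))=\{g\in C(Y)\mid g(e(x))=0\}=\xi_Y(e(x))$), the triangle identities do reduce componentwise to the corresponding identities for the adjunction $(C^*,Y)$ and the duality $(B,X)$, and the restriction claim follows because $(\zeta_A,\vartheta_B)$ is invertible in $\basic$ exactly when $\zeta_A$ is, i.e.\ when $A\in\ubal$, while $\mathsf{E}$ always lands in $\ubasic$. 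The one step you leave implicit is that $\eta_X$ is a homeomorphism onto $X_{B(X)}$ equipped with $\tau_{e^\flat}$, not merely a bijection; this holds because $\tau_{e^\flat}$ is the initial topology for $(e^\flat)_\flat$, the square $(e^\flat)_\flat\circ\eta_X=\xi_Y\circ e$ commutes, and $\xi_Y\circ e$ is an embedding, so $\eta_X$ carries open sets to open sets in both directions.
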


\subsection{Maximal extensions and completely regular spaces}

\begin{definition} \label{def: maximal}
\begin{enumerate}
\item[]
\item We call two basic extensions $\alpha : A \to B$ and $\gamma : C \to B$ \emph{compatible} if the topologies $\tau_\alpha$ and
$\tau_\gamma$ on $X_B$ are equal.
\item A basic extension $\alpha : A \to B$ is \emph{maximal} provided that for every compatible extension $\gamma : C \to B$, there is a
morphism $\delta : C \to A$ in $\bal$ such that $\alpha \circ \delta = \gamma$.
\[
\begin{tikzcd}
A \arrow[rr, "\alpha"] && B \\
& C \arrow[ul, "\delta"] \arrow[ur, "\gamma"']
\end{tikzcd}
\]
\item Let $\mbasic$ be the full subcategory of $\basic$ consisting of maximal basic extensions.
\end{enumerate}
\end{definition}

By \cite[Prop.~7.4]{BMO18d}, a basic extension $\alpha : A \to B$ is maximal iff $A \in \ubal$ and $\alpha_\flat : X_B \to Y_A$ is equivalent to
the Stone-\v{C}ech compactification of $X_B$. Consequently, $\alpha$ is isomorphic to $s^\flat:C(\beta X_B) \to B(X_B)$. The dual equivalence of
Theorem~\ref{duality cptf} then restricts to a dual equivalence between Stone-\v{C}ech compactifications and $\mbasic$. Since the category $\creg$
is equivalent to the subcategory of $\C$ consisting of Stone-\v{C}ech compactifications \cite[Prop.~6.8]{BMO19a}, we obtain the following
generalization of Gelfand-Naimark-Stone duality to completely regular spaces.

\begin{theorem} \cite[Thm.~7.7]{BMO18d} \label{CR duality}
There is a dual equivalence between $\creg$ and $\mbasic$.
\end{theorem}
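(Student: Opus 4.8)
The plan is to assemble the dual equivalence $\creg \simeq \mbasic$ from the pieces already developed, by composing two equivalences and identifying the relevant subcategories. First I would invoke Theorem~\ref{duality cptf}, which gives a dual equivalence between $\C$ and $\ubasic$. The strategy is then to cut this equivalence down to matching full subcategories on each side: on the topological side, the full subcategory of $\C$ consisting of Stone-\v{C}ech compactifications $s : X \to \beta X$, and on the algebraic side, the full subcategory $\mbasic$ of maximal basic extensions. Since a dual equivalence restricts to a dual equivalence between any pair of full subcategories that correspond to one another under the functors, the main work is to verify that the functors $\sf E$ and $\sf C$ carry one of these subcategories into the other.

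The key step is the characterization quoted from \cite[Prop.~7.4]{BMO18d}: a basic extension $\alpha : A \to B$ is maximal if and only if $A \in \ubal$ and $\alpha_\flat : X_B \to Y_A$ is equivalent to the Stone-\v{C}ech compactification of $X_B$. I would use this in both directions. Given a maximal extension $\alpha : A \to B$, the condition $A \in \ubal$ places $\alpha$ in $\ubasic$, and the second clause says precisely that ${\sf C}(\alpha) = \alpha_\flat$ is (equivalent to) a Stone-\v{C}ech compactification; so $\sf C$ sends $\mbasic$ into the subcategory of Stone-\v{C}ech compactifications. Conversely, applying $\sf E$ to a Stone-\v{C}ech compactification $s : X_B \to \beta X_B$ reproduces, up to isomorphism, the extension $s^\flat : C(\beta X_B) \to B(X_B)$, which the proposition identifies as maximal. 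Because $\sf E$ and $\sf C$ are quasi-inverse on $\C$ and $\ubasic$ by Theorem~\ref{duality cptf}, these two observations show the restricted functors form a dual equivalence between Stone-\v{C}ech compactifications and $\mbasic$.

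Finally I would splice this to the topological input \cite[Prop.~6.8]{BMO19a}, which asserts that $\creg$ is equivalent to the subcategory of $\C$ consisting of Stone-\v{C}ech compactifications (the equivalence sending $X$ to $s : X \to \beta X$ and remembering that $X$ is recovered as the domain of the embedding). Composing this equivalence with the dual equivalence from the previous paragraph yields a dual equivalence between $\creg$ and $\mbasic$, as desired.

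The step I expect to demand the most care is the morphism-level bookkeeping rather than the object-level correspondence. On objects the identification is clean via \cite[Prop.~7.4]{BMO18d}, but one must check that the morphisms of $\mbasic$ correspond exactly to the morphisms between Stone-\v{C}ech compactifications, and in turn to arbitrary continuous maps in $\creg$; here the normality condition on $\sigma$ in a $\basic$-morphism and the universal property of $\beta X$ (every continuous map $X \to X'$ extends uniquely to $\beta X \to \beta X'$) must be seen to match up. The main obstacle is thus confirming that restricting a dual equivalence to these full subcategories is legitimate, i.e.\ that no morphisms are lost or spuriously introduced, which reduces to the fullness of the subcategories together with the object correspondence already established.
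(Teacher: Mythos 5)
Your proposal is correct and follows essentially the same route as the paper: it restricts the dual equivalence of Theorem~\ref{duality cptf} between $\C$ and $\ubasic$ to the full subcategories of Stone-\v{C}ech compactifications and maximal basic extensions via the characterization in \cite[Prop.~7.4]{BMO18d}, and then composes with the equivalence of $\creg$ with the Stone-\v{C}ech subcategory of $\C$ from \cite[Prop.~6.8]{BMO19a}. The paper presents exactly this assembly (in compressed form) in the paragraph preceding the theorem statement, so your additional attention to the morphism-level verification is a sensible elaboration rather than a deviation.
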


\section{Normal extensions and insertion theorems}

In this section we develop algebraic counterparts of normal spaces by introducing normal basic extensions. Our results utilize 
well-known insertion theorems from the literature, which we formulate in the language of basic extensions.

\begin{definition}
Let $\alpha : A \to B$ be a basic extension.
\begin{enumerate}
\item $b \in B$ is \emph{closed} (relative to $\alpha$) if $b$ is a meet of elements from $\alpha[A]$.
\item $b \in B$ is \emph{open} (relative to $\alpha$) if $b$ is a join of elements from $\alpha[A]$.
\item $b \in B$ is \emph{clopen} (relative to $\alpha$) if $b$ is both closed and open.
\end{enumerate}
\end{definition}

\begin{remark} \label{rem: closed}
Let $\alpha : A \to B$ be a basic extension. The following facts are easy to see.
\begin{enumerate}
\item The meet of an arbitrary set of closed elements is closed, and the join of an arbitrary set of open elements is open. In addition, the join of finitely many closed elements is closed, and the meet of finitely many open elements is open.
\item The sum of closed elements is closed, and the sum of open elements is open. These follow from the $\ell$-ring identities
\[
\bigwedge S + \bigwedge T  = \bigwedge \{ s+t \mid s \in S, t \in T\} \quad\textrm{and}\quad \bigvee S + \bigvee T = \bigvee \{s + t \mid s \in S, t \in T\}.
\]
\item $b\in B$ is closed (resp.\ open) iff $-b$ is open (resp.\ closed). These follow from the $\ell$-ring identities
\[
-\bigwedge S = \bigvee \{ -s \mid s \in S\} \quad\textrm{and}\quad -\bigvee S = \bigwedge \{ -s \mid s \in S \}.
\]
\item If $a \in A$ and $b\in B$ is open (resp.~closed), then $\alpha(a) \wedge b$ is open (resp.~$\alpha(a) \vee b$ is closed). These follow from the distributive laws
\[
s \wedge \bigvee T = \bigvee \{ s \wedge t \mid t \in T\} \quad\textrm{and}\quad  s \vee \bigwedge T  = \bigwedge \{ s \vee t \mid t \in T\}.
\]
\item Each element of $B$ is a meet of open elements and a join of closed elements. 
\end{enumerate}
\end{remark}

Let $X$ be a completely regular space and let $f \in B(X)$. For $x \in X$ let $\mathcal{N}_x$ be the set of all open neighborhoods of $x$. 
Define $f^*, f_* \in B(X)$ by
\begin{align*}
f^*(x) &= \inf \{ \sup f(U) \mid U \in \mathcal{N}_x \}, \\
f_*(x) &= \sup \{ \inf f(U) \mid U \in \mathcal{N}_x \}.
\end{align*}

Then $f$ is \emph{upper semicontinuous} if $f^* = f$, and \emph{lower semicontinuous} if $f_* = f$. It is pointed out in 
\cite[p.~430]{Dil50} that $f$ is upper semicontinuous iff $f^{-1}(-\infty,r)$ is open, and $f$ is lower semicontinous iff $f^{-1}(r, \infty)$ 
is open, for each $r \in \mathbb{R}$. It follows that $f$ is continuous iff it is both upper and lower semicontinous.

\begin{lemma} \label{lem: semicontinuous}
Let $e : X \to Y$ be a compactification and $e^\flat : C(Y) \to B(X)$ the corresponding basic extension.
\begin{enumerate}
\item $f \in B(X)$ is closed $($relative to $e^\flat)$ iff $f$ is upper semicontinuous.
\item $f \in B(X)$ is open $($relative to $e^\flat)$ iff $f$ is lower semicontinuous.
\item $f \in B(X)$ is clopen $($relative to $e^\flat)$ iff $f$ is continuous.
\end{enumerate}
\end{lemma}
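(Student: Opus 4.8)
\section*{Proof proposal}

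The plan is to prove part (1) in full and then obtain (2) and (3) formally. I would begin by recording that, since $B(X)$ is a basic algebra, it is Dedekind complete and the meet (resp.\ join) of a bounded family is computed pointwise as the infimum (resp.\ supremum); thus ``$f$ is closed'' means $f=\bigwedge S$ for some $S\subseteq e^\flat[C(Y)]$ with the meet taken pointwise, and dually for ``open''. Throughout I would identify $X$ with the dense subspace $e[X]$ of $Y$, using that $e$ is a topological embedding. For the forward implication of (1), suppose $f=\bigwedge_i(g_i\circ e)$ with $g_i\in C(Y)$. Each $g_i\circ e$ is continuous on $X$, hence upper semicontinuous, and the pointwise infimum of upper semicontinuous functions is upper semicontinuous because $\{x : f(x)<r\}=\bigcup_i\{x : g_i(e(x))<r\}$ is open for every $r\in\mathbb{R}$. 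So $f$ is upper semicontinuous.

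The backward implication is the crux. Given a bounded upper semicontinuous $f$ on $X$, I would first transport it to $Y$ via its upper envelope along $e$, setting $\bar f(y)=\inf\{\sup f[e^{-1}(V)] : V \text{ open in } Y,\ y\in V\}$ for $y\in Y$. This is well defined and bounded because $e[X]$ is dense (so each $e^{-1}(V)$ is nonempty) and $f$ is bounded, and $\bar f$ is upper semicontinuous on $Y$ by the standard envelope argument: if $\bar f(y)<r$, a witnessing $V$ satisfies $\bar f(y')<r$ for all $y'\in V$. The key point is that $\bar f\circ e=f$. Indeed, since $e$ is an embedding, the sets $e^{-1}(V)$ with $V$ ranging over open neighborhoods of $e(x)$ are exactly the open neighborhoods of $x$ in $X$; hence $\bar f(e(x))=\inf\{\sup f[U] : U\in\mathcal{N}_x\}=f^*(x)$, which equals $f(x)$ because $f$ is upper semicontinuous.

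Next I would invoke the classical fact that on the compact Hausdorff, hence normal, space $Y$, every bounded upper semicontinuous function is the pointwise infimum of the continuous functions lying above it. Concretely, for $y_0\in Y$ and $\varepsilon>0$ one separates $y_0$ from the closed set $\{y : \bar f(y)\ge \bar f(y_0)+\varepsilon\}$ by a Urysohn function and thereby produces $g\in C(Y)$ with $g\ge\bar f$ and $g(y_0)\le\bar f(y_0)+\varepsilon$; taking the infimum over all such $g$ recovers $\bar f$. Writing $\bar f=\bigwedge\{g\in C(Y) : g\ge\bar f\}$ and composing with $e$ yields $f=\bar f\circ e=\bigwedge\{g\circ e : g\in C(Y),\ g\ge\bar f\}$, which exhibits $f$ as a meet (computed pointwise in $B(X)$) of elements of $e^\flat[C(Y)]$. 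Hence $f$ is closed, completing (1).

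Parts (2) and (3) then follow formally. By Remark~\ref{rem: closed}(3), $f$ is open iff $-f$ is closed, i.e.\ by (1) iff $-f$ is upper semicontinuous, which holds iff $f$ is lower semicontinuous; this gives (2). For (3), $f$ is clopen iff it is both closed and open, i.e.\ both upper and lower semicontinuous, i.e.\ continuous, as recorded just before the lemma. The main obstacle I anticipate is the backward direction of (1): the two delicate points are the verification that $\bar f\circ e=f$, which rests on the embedding property of $e$ to identify the neighborhood filters of $x$ and $e(x)$, and the supply of continuous majorants of $\bar f$ on $Y$, for which the normality of the compact Hausdorff space $Y$ and Urysohn's lemma are precisely what is needed.
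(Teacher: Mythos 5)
Your proof is correct, and its skeleton is the same as the paper's: handle the forward direction of (1) by noting that an infimum of continuous functions is upper semicontinuous, handle the backward direction by lifting $f$ to an upper semicontinuous function on $Y$ and representing that lift as an infimum of continuous functions, and then derive (2) from (1) via negation and (3) from (1) and (2). The difference is that the paper's proof is modular: it cites \cite[Lem.~7.2]{BMO18d} for the existence of an upper semicontinuous $f' \in B(Y)$ with $e^+(f') = f$, cites \cite[Lem.~4.1]{Dil50} for the representation $f' = \bigwedge S$ with $S \subseteq C(Y)$, and then transfers the meet along $e^+$ using that $e^+$ is a complete homomorphism. You instead inline both ingredients: your upper envelope $\bar f$ is precisely the lift the cited lemma provides (your verification that $\bar f \circ e = f$ via the identification of neighborhood filters under the embedding is the essential content there), and your Urysohn-function argument on the compact Hausdorff space $Y$ is a direct proof of Dilworth's representation theorem; you also replace the complete-homomorphism step by working with pointwise infima in $B(X)$ throughout, which is legitimate since meets in $B(X)$ are computed pointwise. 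What the paper's route buys is brevity and reuse of established machinery; what yours buys is a self-contained argument that makes visible exactly where the density of $e[X]$, the embedding property of $e$, and the normality of $Y$ are used.
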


\begin{proof}
(1). Let $\iota : C^*(X) \to B(X)$ be the inclusion. Then $e^\flat$ is the composition $\iota \circ e^* : C(Y) \to C^*(X) \to B(X)$. If $f$ is a meet from $e^\flat[C(Y)]$,
then it is a meet from $\iota[C^*(X)]$, and so $f$ is upper semicontinuous (see, e.g., \cite[Lem.~4.1]{Dil50}). Conversely,
suppose that $f$ is upper semicontinuous.
By \cite[Lem.~7.2]{BMO18d}, there is an upper semicontinuous function $f' \in B(Y)$ with $e^+(f') = f$. By \cite[Lem.~4.1]{Dil50},
$f' = \bigwedge S$ for some $S \subseteq C(Y)$. Because $e^+$ is a complete homomorphism, we have
\[
f = e^+(f') = e^+\left(\bigwedge S\right) = \bigwedge \{ e^+(s) \mid s \in S \} = \bigwedge \{ e^*(s) \mid s \in S \} = \bigwedge \{ e^\flat(s) \mid s \in S\},
\]
which shows that $f$ is closed relative to $e^\flat$.

(2). If $f \in B(X)$, then $f$ is lower semicontinuous iff $-f$ is upper semicontinuous. By Remark~\ref{rem: closed}(3), $f$ is a join from $e^\flat[C(Y)]$ iff $-f$
is a meet from $e^\flat[C(Y)]$. Thus, (2) follows from (1).

(3). A real-valued function is continuous iff it is both upper and lower semicontinuous. Now apply (1) and (2).
\end{proof}

\begin{lemma} \label{lem: closed}
Let $e : X \to Y$ be a compactification, $e^\flat : C(Y) \to B(X)$ the corresponding basic extension, and $S\subseteq X$.
\begin{enumerate}
\item $S$ is a closed subset of $X$ iff $\chi_S$ is closed $($relative to $e^\flat)$.
\item $S$ is an open subset of $X$ iff $\chi_S$ is open $($relative to $e^\flat)$.
\item $S$ is a clopen subset of $X$ iff $\chi_S$ is clopen $($relative to $e^\flat)$.
\end{enumerate}
\end{lemma}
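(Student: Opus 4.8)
The plan is to deduce Lemma~\ref{lem: closed} directly from Lemma~\ref{lem: semicontinuous} by translating the topological notions of closed, open, and clopen subsets into the semicontinuity of the corresponding characteristic functions $\chi_S \in B(X)$. The key observation is that the semicontinuity of an indicator function $\chi_S$ is governed entirely by the topology of $S$: for each $r \in \mathbb{R}$, the sublevel and superlevel sets $\chi_S^{-1}(-\infty, r)$ and $\chi_S^{-1}(r, \infty)$ are always one of $\varnothing$, $S$, $X\setminus S$, or $X$, depending on where $r$ sits relative to $\{0,1\}$. Since Dilworth's criterion (quoted in the excerpt just before Lemma~\ref{lem: semicontinuous}) says $f$ is upper semicontinuous iff $f^{-1}(-\infty,r)$ is open for all $r$, and lower semicontinuous iff $f^{-1}(r,\infty)$ is open for all $r$, I can read off exactly which semicontinuity condition corresponds to which topological condition on $S$.

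First I would prove (1). Computing $\chi_S^{-1}(r,\infty)$: it equals $X$ for $r<0$, equals $S$ for $0 \le r < 1$, and equals $\varnothing$ for $r \ge 1$. Thus $\chi_S$ is lower semicontinuous iff $S$ is open. Dually, $\chi_S^{-1}(-\infty,r)$ equals $\varnothing$ for $r \le 0$, equals $X\setminus S$ for $0 < r \le 1$, and equals $X$ for $r > 1$; hence $\chi_S$ is upper semicontinuous iff $X \setminus S$ is open, i.e.\ iff $S$ is closed. Combining the second computation with Lemma~\ref{lem: semicontinuous}(1) gives that $S$ is closed iff $\chi_S$ is closed relative to $e^\flat$, establishing~(1).

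Next, for (2) I would use the first computation above together with Lemma~\ref{lem: semicontinuous}(2): $S$ is open iff $\chi_S$ is lower semicontinuous iff $\chi_S$ is open relative to $e^\flat$. Alternatively, (2) follows from (1) by passing to complements, since $S$ is open iff $X\setminus S$ is closed and $\chi_{X\setminus S} = 1 - \chi_S$, using Remark~\ref{rem: closed}(3) to relate openness and closedness of $\chi_S$; either route is routine. Finally, (3) is immediate: $S$ is clopen iff it is both open and closed, which by (1) and (2) holds iff $\chi_S$ is both open and closed relative to $e^\flat$, i.e.\ clopen; this matches Lemma~\ref{lem: semicontinuous}(3) since $\chi_S$ is clopen iff it is continuous.

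I do not anticipate a serious obstacle here, as the lemma is essentially a dictionary entry reducing the statement to Lemma~\ref{lem: semicontinuous} via the level-set description of semicontinuity. The only point requiring mild care is bookkeeping the boundary cases in the level sets at $r=0$ and $r=1$ to be sure the correct topological set ($S$ versus $X\setminus S$) is extracted and matched with the correct (upper versus lower) semicontinuity condition; getting these endpoints backwards is the one place a slip could occur.
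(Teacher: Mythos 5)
Your proposal is correct and follows essentially the same route as the paper: the paper's proof also reduces each part to Lemma~\ref{lem: semicontinuous} via the equivalences ``$S$ closed iff $\chi_S$ upper semicontinuous'' and ``$S$ open iff $\chi_S$ lower semicontinuous,'' which the paper dismisses as elementary and you verify explicitly (and correctly) through the level-set computation. Your endpoint bookkeeping at $r=0$ and $r=1$ is accurate, so there is no gap.
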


\begin{proof}
(1). It is elementary to see that $S$ is closed in $X$ iff $\chi_S$ is upper semicontinuous. Thus, by Lemma~\ref{lem: semicontinuous}(1),
$S$ is closed iff $\chi_S$ is closed relative to $e^\flat$.

(2). We have that $S$ is open in $X$ iff $\chi_S$ is lower semicontinuous. Therefore, by Lemma~\ref{lem: semicontinuous}(2), $S$ is open iff
$\chi_S$ is open.

(3). This follows from (1) and (2).
\end{proof}

 Normal spaces can be characterized by properties that involve insertion of continuous real-valued functions between functions from more general classes. As discussed in Remark~\ref{KT remark}, such theorems originate in the early work of Baire and Hahn 
 and take one of their most well-known forms in the work of Kat\v{e}tov \cite{Kat51, Kat53} and Tong \cite{Ton52}: A space $X$ is normal iff for each pair of bounded real-valued functions $f \leq g$ on $X$ with $f$ upper semicontinuous and $g$ lower semicontinuous, there is a continuous real-valued function $h$ on $X$ such that $f \leq h \leq g$. Tong's approach to insertion emphasizes lattice-theoretic conditions and hence lends itself to our algebraic approach.  In particular, the strategy of Tong is to insert first between $f$ and $g$ a pair $f_1 \leq g_1$ where $f_1$ is a countable meet of continuous functions and $g_1$ is a countable join of continuous functions. He then shows that $f_1$ and $g_1$ can be replaced by a single function $h$ that is a countable meet and a countable join of continuous functions  (see Theorem~\ref{Tong} below). Since a meet of upper semicontinuous functions is upper semicontinuous and a join of lower semicontinuous functions is lower semicontinuous, it follows that $h$ is continuous. In property (T) of  the following definition we extract the first step in Tong's strategy. We consider also the closely related condition (BS), which was studied by Blatter and Seever in the function setting in \cite{BS75}. Finally, condition (S) is a common strengthening of both conditions.

\begin{definition}
Let $\alpha : A \to B$ be a basic extension, $f \in B $ closed, and $g \in B$ open.
\begin{enumerate}
\item[(T)] If $f \le g$, then there are $\{a_n\},\{b_n\}\subseteq A$ with $f \le \bigwedge_n \alpha(a_n) \le \bigvee_n \alpha(b_n) \le g$.
\item[(BS)] If $f \le g$, then there are $\{a_n\},\{b_n\}\subseteq A$ with $f \le \bigvee_n \alpha(a_n) \le \bigwedge_n \alpha(b_n) \le g$.
\item[(S)] If $f \le g$, then there are $\{a_n\},\{b_n\}\subseteq A$ with $f \le \bigwedge_n \alpha(a_n) = \bigvee_n \alpha(b_n) \le g$.
\end{enumerate}
\end{definition}

We show that the three conditions are equivalent. For this we utilize Tong's theorem \cite[Thm.~1]{Ton52}, but rephrase it in our setting. This theorem is step two of Tong's strategy described above.

\begin{theorem}[Tong] \label{Tong}
Let $\alpha : A \to B$ be a basic extension, $f$ a countable meet of elements from $\alpha[A]$, and $g$ a countable join of elements from
$\alpha[A]$. If $f \le g$, then there is $u \in B$ such that $f \le u \le g$ and $u$ is both a countable meet and a countable join of elements
from $\alpha[A]$.
\end{theorem}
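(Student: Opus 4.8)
The plan is to recast the conclusion as a squeezing statement and then to invoke Tong's construction to close the gap. First I would normalize the data. Writing $f=\bigwedge_n\alpha(a_n)$ and $g=\bigvee_n\alpha(b_n)$, I replace $a_n$ by $a_1\wedge\cdots\wedge a_n$ and $b_n$ by $b_1\vee\cdots\vee b_n$; since $\alpha$ is an $\ell$-algebra homomorphism it preserves finite meets and joins, so $f,g$ are unchanged while $p_n:=\alpha(a_n)$ decreases to $f$ and $q_n:=\alpha(b_n)$ increases to $g$, with all $p_n,q_n\in\alpha[A]$. I would then reduce the theorem to the following: construct an increasing sequence $r_m\in\alpha[A]$ and a decreasing sequence $s_m\in\alpha[A]$ with $r_m\le s_{m'}$ for all $m,m'$, with $\|s_m-r_m\|\to 0$, and with $f\le\bigvee_m r_m$ and $\bigwedge_m s_m\le g$. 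This suffices: the gap condition and the archimedean property of $B$ force $\bigwedge_m s_m\le\bigvee_m r_m$, hence equality, so the common value $u:=\bigvee_m r_m=\bigwedge_m s_m$ is at once a countable join and a countable meet of elements of $\alpha[A]$, and the two outer bounds give $f\le u\le g$.

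A natural first approximation comes from the diagonal. Set $r_m:=\bigvee_{j\le m}(p_j\wedge q_j)$ and $s_m:=\bigwedge_{j\le m}(p_j\vee q_j)$; these lie in $\alpha[A]$ (a finite sublattice of $\alpha[A]$), are monotone in $m$, and satisfy $r_m\le s_{m'}$ for all $m,m'$. Using the distributive laws of Remark~\ref{rem: closed}(4) one checks that $\bigvee_m r_m=\bigvee_j(p_j\wedge q_j)\ge\bigvee_j(f\wedge q_j)=f\wedge\bigvee_j q_j=f\wedge g=f$, and dually $\bigwedge_m s_m=\bigwedge_j(p_j\vee q_j)\le\bigwedge_j(p_j\vee g)=\bigl(\bigwedge_j p_j\bigr)\vee g=f\vee g=g$. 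Thus the outer bounds already hold. What fails for this naive choice is the gap condition: $\bigvee_m r_m$ is only a join (lower, semicontinuous from below) and $\bigwedge_m s_m$ only a meet (upper), and their difference need not be small, so this pair does not yet single out one element.

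The remaining and decisive step is to refine $r_m,s_m$ so that $\|s_m-r_m\|\to 0$; this is exactly Tong's contribution and is the main obstacle. Here the countability of the two sequences is essential and plays the role that a normality hypothesis plays in the Kat\v{e}tov--Tong theorem: because $f=\bigwedge_n p_n$ and $g=\bigvee_n q_n$ are given with explicit countable witnesses, the functions $p_n,q_n$ themselves serve as approximate separators, in the same way that disjoint zero-sets are separated by their defining functions without any normality assumption. Concretely, I would run a dyadic successive correction: at stage $k$ the error is reduced by at most $2^{-k}\cdot 1$ in norm, each correction being realized by truncating the current approximants against finitely many of the shifted elements $p_n-2^{-k}\cdot 1$ and $q_n+2^{-k}\cdot 1$ via the lattice operations, with the identities $x\vee(f-2^{-k}\cdot 1)=\bigwedge_n\bigl(x\vee(p_n-2^{-k}\cdot 1)\bigr)$ and $x\wedge(g+2^{-k}\cdot 1)=\bigvee_n\bigl(x\wedge(q_n+2^{-k}\cdot 1)\bigr)$ of Remark~\ref{rem: closed}(4) keeping everything expressed through the building blocks $p_n,q_n$. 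The delicate bookkeeping is to choose the finitely many indices at each stage so that a single correction of controlled norm simultaneously tightens one bound, respects the opposite one, preserves monotonicity, and keeps the approximants inside $\alpha[A]$ so that the limits remain a pure countable join and a pure countable meet; it is precisely here that the hypothesis $f\le g$ is used, to guarantee that at every stage there is room to interpolate.
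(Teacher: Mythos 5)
Your setup and your ``first approximation'' are exactly right---your $r_m=\bigvee_{j\le m}(p_j\wedge q_j)$ is precisely the sequence the paper uses---but the step you defer to a ``dyadic successive correction'' is not merely missing: the reduction you propose is impossible in general, so no amount of bookkeeping can complete it. Your plan requires sequences $r_m,s_m\in\alpha[A]$ squeezing $u$ with $\|s_m-r_m\|\to 0$; since $r_m\le u\le s_m$, this would exhibit $u$ as a \emph{uniform} limit of elements of $\alpha[A]$, i.e.\ force $u$ to lie in the uniform closure of $\alpha[A]$ in $B$. But joins and meets in $B$ are order-theoretic (pointwise, in the representation $B\cong B(X)$), not uniform limits, and the theorem must produce elements $u$ that are \emph{not} in that closure. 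Concretely, take the basic extension of Example~\ref{ex: (T) weaker than (N)}: $X=\mathbb{N}$ discrete, $e:X\to Y$ the one-point compactification, $\alpha=e^\flat$. Here $\alpha[C(Y)]$ is (isomorphic to) the algebra of convergent sequences, which is an isometric image of the uniformly complete algebra $C(Y)$ and hence uniformly closed in $B(X)=\ell^\infty$. The example shows every element of $B(X)$ is both a countable meet and a countable join from $\alpha[C(Y)]$; so take $f=g=\chi_E$ with $E$ the set of even numbers. The hypotheses of the theorem hold, the only admissible $u$ is $\chi_E$ itself, and $\chi_E$ is not a convergent sequence. Hence no sequences with $\|s_m-r_m\|\to 0$ can exist, and your archimedean argument has nothing to apply to. (Your analogy with Kat\v{e}tov--Tong is also a warning sign: norm-interpolation of this kind is what condition (N) of the paper encodes, and the paper shows (N) is strictly stronger than the conclusion of Tong's lemma.)

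The repair requires no norms at all, and you were one move away from it. Keep $u=\bigvee_m r_m$ with your $r_m$, but discard your symmetric dual diagonal $s_m=\bigwedge_{j\le m}(p_j\vee q_j)$ (you correctly observed it leaves a gap) and instead set $v_n:=r_n\vee p_n$, an element of $\alpha[A]$. Three purely lattice-theoretic computations finish the proof: first, since the $p_j$ decrease, $r_{n+p}\le r_n\vee p_n=v_n$ for all $p$, whence $u\le v_n$ for every $n$ and so $u\le v:=\bigwedge_n v_n$; second, by distributivity $v=\bigwedge_n(r_n\vee p_n)\le\bigwedge_n(u\vee p_n)=u\vee\bigwedge_n p_n=u\vee f$; third, from $f\le g$ and distributivity, $f=f\wedge g=\bigvee_j(f\wedge q_j)\le\bigvee_j(p_j\wedge q_j)=u$, so $u\vee f=u$ and therefore $v\le u$. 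Thus $u=v$ is simultaneously the countable join of the $r_n$ and the countable meet of the $v_n$, and $f\le u\le g$ (the bound $u\le g$ follows from $r_m\le q_m\le g$, as the $q_j$ increase). The asymmetry of $v_n=r_n\vee p_n$---built from the join-sequence already constructed, padded by $p_n$---is exactly what makes the gap close identically rather than approximately.
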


\begin{proof}
Write $f = \bigwedge_n \alpha(a_n)$ and $g = \bigvee_n \alpha(b_n)$ with the $a_n, b_n \in A$. By replacing $a_n$ by
$a_1 \wedge \cdots \wedge a_n$ and $b_n$ by $b_1 \vee \cdots \vee b_n$, we may assume the $a_n$ are decreasing and the $b_n$ are increasing. Let
\[
u_n = (\alpha(a_1) \wedge \alpha(b_1)) \vee \cdots \vee (\alpha(a_n) \wedge \alpha(b_n)).
\]
Then $u_n \le \alpha(b_n)$ as the $b_i$ are increasing, so $u_n \le g$ for each $n$. Thus, the join $u = \bigvee_n u_n$ exists in $B$,
and $u \le g$. Let $v_n = u_n \vee \alpha(a_n)$. Then $\alpha(a_n) \le v_n$ for each $n$, so $f \le v_n$ for each $n$. The meet
$v = \bigwedge_n v_n$ then exists in $B$. As $u_n, v_n \in \alpha[A]$, it is sufficient to show $u=v$. We do this by showing that
$f \le u \le v$ and $v \le u\vee f$.

We have
\[
v = \bigwedge_n v_n = \bigwedge_n (u_n \vee \alpha(a_n)) \le \bigwedge_n (u \vee \alpha(a_n)) = u \vee \bigwedge \alpha(a_n) = u\vee f.
\]
Next, since the $a_n$ are decreasing, for each $n,p$ we have
\begin{align*}
u_{n+p} &= u_n \vee (\alpha(a_{n+1})\wedge \alpha(b_{n+1})) \vee \cdots \vee (\alpha(a_{n+p}) \wedge \alpha(b_{n+p})) \\
&\le u_n \vee \alpha(a_{n+1}) \vee \cdots \vee \alpha(a_{n+p}) \le u_n \vee \alpha(a_{n}) \vee \cdots \vee \alpha(a_{n+p}) \\
&= u_n \vee \alpha(a_n) = v_n
\end{align*}
This implies that $\bigvee_p u_{n+p} \le v_n$ for each $n$, but $u = \bigvee_p u_{n+p}$ since the $u_n$ are increasing, hence $u \le v_n$
for each $n$. Because $v = \bigwedge_n v_n$, we have $u \le v$. Finally, since $f\le g$,
\[
f = f \wedge g =  f \wedge \bigvee_n \alpha(b_n) = \bigvee_n (f \wedge \alpha(b_n)) \le \bigvee_n (\alpha(a_n) \wedge \alpha(b_n)) = \bigvee_n u_n = u.
\]
This finishes the argument that $u=v$, and so the proof is complete.
\end{proof}

\begin{theorem} \label{thm: BS=T}
For a basic extension $\alpha : A \to B$, \emph{(T)}, \emph{(BS)}, and \emph{(S)} are equivalent.
\end{theorem}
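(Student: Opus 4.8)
The plan is to prove the cycle of implications $\text{(T)} \Rightarrow \text{(S)} \Rightarrow \text{(BS)} \Rightarrow \text{(T)}$, so that the only substantial input beyond Tong's theorem (Theorem~\ref{Tong}) is a short iteration argument for the last implication.

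I would first dispose of $\text{(S)} \Rightarrow \text{(BS)}$, which is purely formal. Given sequences with $f \le \bigwedge_n \alpha(a_n) = \bigvee_n \alpha(b_n) \le g$, swapping the roles of the two sequences yields $f \le \bigvee_n \alpha(b_n) = \bigwedge_n \alpha(a_n) \le g$, and reading the central equality as the (trivial) inequality $\bigvee_n \alpha(b_n) \le \bigwedge_n \alpha(a_n)$ gives exactly the chain required by (BS).

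Next, for $\text{(T)} \Rightarrow \text{(S)}$ I would invoke Tong's theorem directly. Writing $f_1 = \bigwedge_n \alpha(a_n)$ and $g_1 = \bigvee_n \alpha(b_n)$ from the conclusion of (T), we have a countable meet $f_1$ and a countable join $g_1$ of elements of $\alpha[A]$ with $f_1 \le g_1$. Theorem~\ref{Tong} then produces $u \in B$ that is simultaneously a countable meet $\bigwedge_n \alpha(c_n)$ and a countable join $\bigvee_n \alpha(d_n)$ with $f_1 \le u \le g_1$. Since $f \le f_1$ and $g_1 \le g$, the chain $f \le \bigwedge_n \alpha(c_n) = \bigvee_n \alpha(d_n) \le g$ is precisely (S).

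The crux is $\text{(BS)} \Rightarrow \text{(T)}$, and here the key observation is that the intermediate join element produced by (BS) is itself open, so (BS) may be applied a second time. Concretely, applying (BS) to the pair $f \le g$ gives $f \le \bigvee_n \alpha(a_n) \le \bigwedge_n \alpha(b_n) \le g$; set $u = \bigvee_n \alpha(a_n)$, which is open by definition and satisfies $f \le u$. Since $f$ is closed, $u$ is open, and $f \le u$, condition (BS) applies to the pair $(f,u)$ and yields a countable meet $w = \bigwedge_m \alpha(d_m)$ with $f \le w \le u$. Combining, $f \le w \le u \le g$ exhibits a countable meet below a countable join, which is (T). The main point to verify carefully is exactly this reapplicability: that the element $u$ obtained in the first step qualifies as an open element of the basic extension (immediate, being a join of elements of $\alpha[A]$) and that $f \le u$, so that (BS) is legitimately invoked the second time. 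Everything else reduces to routine bookkeeping with the lattice operations, and the cycle then closes.
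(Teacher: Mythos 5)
Your proposal is correct and follows essentially the same route as the paper: (T) $\Rightarrow$ (S) via Tong's theorem, (S) $\Rightarrow$ (BS) trivially, and (BS) $\Rightarrow$ (T) by applying (BS) a second time. The only (immaterial) difference is in that last step: the paper re-applies (BS) to the pair consisting of the closed meet $\bigwedge_n \alpha(b_n)$ from the first application and $g$, whereas you re-apply it to $f$ and the open join $\bigvee_n \alpha(a_n)$ --- a mirror image of the same argument.
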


\begin{proof}
(T) $\Rightarrow$ (S). This is Theorem~\ref{Tong}.

(S) $\Rightarrow$ (BS). This is trivial.

(BS) $\Rightarrow$ (T). Suppose that $f\in B$ is closed, $g\in B$ is open, and $f \le g$. By (BS), there are $\{a_n\},\{b_n\}\subseteq A$ with
$f \le \bigvee_n \alpha(a_n) \le \bigwedge_n \alpha(b_n) \le g$. If $f' = \bigwedge_n \alpha(b_n)$, then $f'$ is closed and
$f' \le g$. Applying (BS) to $f' \le g$ yields $\{c_n\}, \{d_n\} \subseteq A$ with
$f' \le \bigvee_n \alpha(c_n) \le \bigwedge_n \alpha(d_n) \le g$. Thus,
$f \le \bigwedge_n \alpha(b_n) \le \bigvee_n \alpha(c_n) \le g$, yielding (T).
\end{proof}

We next connect normality with the  equivalent conditions (T), (BS), and (S).

\begin{theorem} \label{thm: T --> normal}
Let $e : X \to Y$ be a compactification and $e^\flat : C(Y) \to B(X)$ the corresponding basic extension.
\begin{enumerate}
\item If $e^\flat$ satisfies the equivalent conditions \emph{(T)}, \emph{(BS)}, and \emph{(S)}, then $X$ is normal.
\item If $X$ is normal, then $s^\flat : C(\beta X) \to B(X)$ satisfies \emph{(T)}, \emph{(BS)}, and \emph{(S)}.
\item $X$ is normal iff $s^\flat$ satisfies \emph{(T)}, \emph{(BS)}, \emph{(S)}.
\end{enumerate}
\end{theorem}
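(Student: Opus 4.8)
The plan is to reduce all three parts to the Kat\v{e}tov-Tong insertion criterion recalled above, translating between the algebraic and topological sides via Lemma~\ref{lem: semicontinuous} and freely interchanging \emph{(T)}, \emph{(BS)}, \emph{(S)} by Theorem~\ref{thm: BS=T}. The guiding observation is that each element of $e^\flat[C(Y)]$ is of the form $a\circ e$ with $a$ continuous, hence continuous on $X$, so that countable meets and joins of such elements are respectively upper and lower semicontinuous.

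For part (1), I would assume $e^\flat$ satisfies \emph{(S)} and verify the insertion hypothesis of Kat\v{e}tov-Tong for $X$. Given $f$ upper semicontinuous and $g$ lower semicontinuous with $f \le g$, Lemma~\ref{lem: semicontinuous} tells me $f$ is closed and $g$ is open relative to $e^\flat$, so \emph{(S)} furnishes $\{a_n\},\{b_n\}\subseteq C(Y)$ with $f \le \bigwedge_n e^\flat(a_n) = \bigvee_n e^\flat(b_n) \le g$. Writing $u$ for the common value, I would use that meets and joins in $B(X)$ are computed pointwise and that each $e^\flat(a_n)=a_n\circ e$ is continuous, so $u$ is simultaneously a pointwise infimum of continuous functions (hence upper semicontinuous) and a pointwise supremum of continuous functions (hence lower semicontinuous), and therefore continuous. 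This continuous $u$ with $f \le u \le g$ is the required insertion, so $X$ is normal.

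For part (2), assuming $X$ normal, by Theorem~\ref{thm: BS=T} it suffices to establish \emph{(S)} for $s^\flat$. Given $f$ closed and $g$ open with $f \le g$, Lemma~\ref{lem: semicontinuous} makes $f$ upper semicontinuous and $g$ lower semicontinuous, so the Kat\v{e}tov-Tong criterion produces a continuous $h$ with $f \le h \le g$. Since $s$ is the Stone-\v{C}ech compactification, $s^\flat[C(\beta X)] = C^*(X)$, so I would lift $h$ to $\tilde h \in C(\beta X)$ with $s^\flat(\tilde h)=h$ and take the constant sequences $a_n = b_n = \tilde h$, yielding $\bigwedge_n s^\flat(a_n) = h = \bigvee_n s^\flat(b_n)$ with $f \le h \le g$, which is precisely \emph{(S)}. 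Part (3) is then immediate: apply part (2) for the forward direction and apply part (1) to the Stone-\v{C}ech compactification $s : X \to \beta X$ for the converse.

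I expect no serious obstacle here. Once Lemma~\ref{lem: semicontinuous} and Theorem~\ref{thm: BS=T} are in hand, the only genuinely substantive point is the observation in part (1): the common value $u$ supplied by \emph{(S)}, being at once a countable meet and a countable join of continuous functions, is both upper and lower semicontinuous and hence continuous. This is exactly the bridge from the purely algebraic condition \emph{(S)} to the analytic content of Kat\v{e}tov-Tong, and it is what makes the three insertion conditions an adequate algebraic surrogate for normality.
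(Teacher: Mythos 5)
Parts (1) and (3) of your proposal are sound. In (1), your key observation is correct: meets and joins in $B(X)$ are computed pointwise, so the common value $u$ furnished by (S) is simultaneously a pointwise infimum of continuous functions (hence upper semicontinuous) and a pointwise supremum of continuous functions (hence lower semicontinuous), and is therefore a continuous insertion; concluding normality from this is the standard easy argument. The paper reaches the same conclusion slightly more directly, applying (BS) to $\chi_C \le \chi_U$ for $C$ closed, $U$ open, and extracting the separating sets $V = g^{-1}(1/2,\infty)$ and $D = f^{-1}[1/2,\infty)$, but the two routes are essentially the same.

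The genuine gap is in part (2), where you invoke the hard (left-to-right) direction of the Kat\v{e}tov-Tong theorem as a black box: normality implies the insertion of a continuous function between an upper semicontinuous $f$ and a lower semicontinuous $g$. That direction is precisely the substantive content part (2) is supposed to establish. Indeed, given Lemma~\ref{lem: semicontinuous} and the identification $s^\flat[C(\beta X)] = C^*(X)$, condition (S) for $s^\flat$ is \emph{equivalent} to the Kat\v{e}tov-Tong insertion property, and the paper derives Kat\v{e}tov-Tong (Theorem~\ref{KT}) as a corollary of this very theorem via Corollary~\ref{cor: normal}(2); so within the paper's development your argument is circular, and even as a standalone argument it reduces the theorem to an equivalent classical statement rather than proving it. What is needed, and what the paper supplies, is a direct construction from normality following Tong's strategy: scale so that $0 \le f \le g \le 1$; for each pair of rationals $0 \le r < s \le 1$, observe that $f^{-1}[s,1]$ is closed, $g^{-1}(r,1]$ is open, and $f^{-1}[s,1] \subseteq g^{-1}(r,1]$, so Urysohn's lemma yields $c_{rs} \in C^*(X)$ with $0 \le c_{rs} \le r$, vanishing off $g^{-1}(r,1]$ and equal to $r$ on $f^{-1}[s,1]$; these satisfy $f \le \bigvee s^\flat(d_{rs}) \le g$ where $s^\flat(d_{rs}) = c_{rs}$, and repeating the argument for $-g' \le -f$ (with $g' = \bigvee s^\flat(d_{rs})$) produces the countable meet on the other side, establishing (T). Theorem~\ref{Tong} (equivalently Theorem~\ref{thm: BS=T}) then upgrades (T) to (S). Your constant-sequence trick would be fine only if the continuous insertion were already in hand without citing Kat\v{e}tov-Tong; supplying that insertion is exactly the work your proposal omits.
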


\begin{proof}
(1). Suppose that $e^\flat$ satisfies (BS) and let $C \subseteq U \subseteq X$ with $C$ closed and $U$ open. Then $\chi_C \le \chi_U$, and by
Lemma~\ref{lem: closed}, $\chi_C$ is closed and $\chi_U$ is open. Since $e^\flat$ satisfies (BS), there are $a_n,b_n \in C(Y)$ with
$\chi_C \le \bigvee_n e^\flat(a_n) \le \bigwedge_n e^\flat(b_n) \le \chi_U$. Because each $e^\flat(b_n)$ is a continuous function on
$X$, the meet $f:=\bigwedge_n e^\flat(b_n)$ is upper semicontinuous. Similarly, the join $g:=\bigvee_n e^\flat(a_n)$ is lower
semicontinuous. Since $g \le f$,
\[
C = \chi_C^{-1}(1/2,\infty) \subseteq g^{-1}(1/2,\infty) \subseteq f^{-1}(1/2,\infty) \subseteq f^{-1}[1/2, \infty) \subseteq
\chi_U^{-1}[1/2, \infty) = U.
\]
Let $V = g^{-1}(1/2,\infty)$ and $D = f^{-1}[1/2, \infty)$. Then $C \subseteq V \subseteq D \subseteq U$. Since $g$ is
lower semicontinuous, $V$ is open, and since $f$ is upper semicontinuous, $D$ closed. Thus, $X$ is normal.

(2). Suppose that $X$ is normal. We show that $s^\flat : C(\beta X) \to B(X)$ satisfies (T) by utilizing the proof strategy of \cite[Thm.~2]{Ton52}.  Let $f,g \in B(X)$ with $f$ closed, $g$ open,
and $f \le g$. By Lemma~\ref{lem: semicontinuous},
$f$ is upper semicontinuous and $g$ is lower semicontinuous. We scale $f,g$ to assume $0 \le f \le g \le 1$ as follows. If $a = \inf(f(X))$, then $0 \le f+a \le g+a$. If $b = \sup((g+a)(X))$, then
\[
0 \le \frac{(f+a)}{b} \le \frac{(g+a)}{b} \le 1.
\]
Replacing $f$ and $g$ by $(f+a)/b$ and $(g+a)/b$, we may assume
$0 \le f \le g \le 1$. We next utilize Urysohn's lemma to produce a countable join of continous functions in between $f$ and $g$. Let $r,s$ be rational numbers with $0 \le r < s \le 1$. From $f\le g$ it follows that
$f^{-1}[s,1]  \subseteq g^{-1}(r,1]$. Since $f$ is upper semicontinuous, $f^{-1}[s,1] = f^{-1}[s,\infty)$ is closed, and since $g$ is lower semicontinuous, $g^{-1}(r,1] = g^{-1}(r,\infty)$ is open. Therefore, by Urysohn's lemma, for each $r,s$
there is $c_{rs} \in C^*(X)$ with $0 \le c_{rs} \le r$ such that $c_{rs} = 0$ on $X \setminus g^{-1}(r,1]$ and $c_{rs} = r$ on $f^{-1}[s,1]$.
From this it is easy to see that $c_{rs} \le g$ for each $r,s$. Since for each $c_{rs}$ there is a unique $d_{rs} \in C(\beta X)$ with $s^\flat(d_{rs}) = c_{rs}$, we have $\bigvee  s^\flat(d_{rs}) \le g$. We next show that $f \le \bigvee  s^\flat(d_{rs})$. Let $x \in X$. Without loss of generality, we may assume that $f(x) > 0$. Let $r < s$ be rationals with $0 \le r < s \le f(x)$. Then $x \in f^{-1}[s,1]$, so $c_{rs}(x) = r$. As $f(x)$ can be approximated
from below by rationals, we see that $f \le \bigvee s^\flat(d_{rs})$. Thus, $f \le \bigvee s^\flat(d_{rs}) \le g$.

Set $g' = \bigvee s^\flat(d_{rs})$. Then $f \le g'$, so $-g' \le -f$. Moreover, $-g'$ is upper semicontinuous and $-f$ is lower semicontinuous.
Repeating the previous argument yields $e_{rs} \in C(\beta X)$ such that $-g' \le \bigvee s^\flat(e_{rs}) \le -f$. Then
$f \le \bigwedge s^\flat(-e_{rs}) \le \bigvee s^\flat(d_{rs}) \le g$. This shows $s^\flat : C(\beta X) \to B(X)$ satisfies (T).

(3). This follows from (1) and (2).
\end{proof}

On the other hand, the converse of Theorem~\ref{thm: T --> normal}(1) is false, as we next show.

\begin{example}
Let $X$ be an uncountable discrete space and let $e : X \to Y$ be its one-point compactification, where $Y=X\cup\{\omega\}$. Clearly $X$ is normal. We show that $e^\flat : C(Y) \to B(X)$ does not satisfy (S). Let $A$ be a subset of $X$ such that both $A$ and $X \setminus A$ are uncountable, and let $f = g = \chi_A$. Since $X$ is discrete, $f$ is closed and $g$ is open relative to $e^\flat$ (see Lemma~\ref{lem: closed}). We show that there do not exist countable families $\{a_n\}, \{b_n\} \subseteq C(Y)$ with $f \le \bigwedge_n e^\flat(a_n) \le \bigvee_n e^\flat(b_n) \le g$. If such families existed, then $\chi_A = \bigwedge_n e^\flat(a_n)$. To see that this cannot happen, we first observe that $a_n(\omega) \ge 1$ for each $n$. Otherwise, there is $n$ with $a_n(\omega) < 1$. So $a_n^{-1}(-\infty, 1-\varepsilon)$ is an open neighborhood of $\omega$ for some $\varepsilon > 0$, which implies that $a_n^{-1}(-\infty, 1-\varepsilon)$ contains all but finitely many elements of $X$. This is impossible from the inequality $\chi_A \le e^\flat(a_n)$ since $A$ is infinite. Consequently, $a_n(\omega) \ge 1$, so $a_n^{-1}(1-\varepsilon, \infty)$ contains all but finitely many elements of $X$ for each $\varepsilon > 0$. This implies that $a_n^{-1}(-\infty, 1/m)$ is finite for all positive integers $m$. From $\chi_A = \bigwedge_n e^\flat(a_n)$ it follows that $X\setminus A \subseteq \bigcup \{ a_n^{-1}(-\infty, 1/m) \mid n,m > 0\}$. This union is then countable, which is a contradiction since $X\setminus A$ is uncountable. Thus, $e^\flat$ does not satisfy (S).
\end{example}

\begin{definition}
\begin{enumerate}
\item[]
\item We call a basic extension $\alpha : A \to B$ \emph{normal} provided $\alpha$ is maximal and satisfies the equivalent conditions (T), (BS),
and (S).
\item Let $\nbasic$ be the full subcategory of $\mbasic$ consisting of normal extensions.
\item Let $\norm$ be the full subcategory of $\creg$ consisting of normal spaces.
\end{enumerate}
\end{definition}

Theorems~\ref{CR duality} and~\ref{thm: T --> normal}(3) yield the following duality theorem for normal spaces.

\begin{theorem}\label{thm: norm}
$\norm$ is dually equivalent to $\nbasic$.
\end{theorem}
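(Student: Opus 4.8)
The plan is to assemble the final duality from the machinery already in place rather than to prove anything from scratch. The key observation is that Theorem~\ref{CR duality} provides a dual equivalence between $\creg$ and $\mbasic$, established by the restrictions of the functors $\sf E$ and $\sf C$. Since $\norm$ is a full subcategory of $\creg$ and $\nbasic$ is a full subcategory of $\mbasic$, it suffices to show that this existing dual equivalence restricts to one between these two full subcategories. For that, I need only verify that the equivalence carries objects of $\norm$ to objects of $\nbasic$ and vice versa; the fullness and faithfulness on morphisms is then inherited automatically from Theorem~\ref{CR duality}, because a full subcategory equivalence is determined entirely by matching the object classes.

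First I would recall the precise form of the equivalence on objects. By the discussion following Definition~\ref{def: maximal} (citing \cite[Prop.~7.4]{BMO18d}), every maximal basic extension $\alpha : A \to B$ is isomorphic to $s^\flat : C(\beta X_B) \to B(X_B)$, where $s : X_B \to \beta X_B$ is the Stone-\v{C}ech compactification of the space $X_B$. Thus an object of $\mbasic$ is, up to isomorphism, determined by the completely regular space $X_B$, and the dual equivalence sends $X \in \creg$ to the maximal basic extension $s^\flat : C(\beta X) \to B(X)$.

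The crux is then a clean biconditional: for $X \in \creg$, the space $X$ is normal if and only if the corresponding maximal extension $s^\flat : C(\beta X) \to B(X)$ is normal in the sense of the preceding definition. This is exactly the content of Theorem~\ref{thm: T --> normal}(3), which states that $X$ is normal iff $s^\flat$ satisfies the equivalent conditions (T), (BS), and (S). Since $s^\flat$ is maximal by construction, satisfying these conditions is precisely what it means for $s^\flat$ to be a normal basic extension. Hence $X \in \norm$ iff its dual $s^\flat \in \nbasic$, and conversely every normal extension is isomorphic to $s^\flat$ for some normal $X$ by the same correspondence. Therefore the dual equivalence of Theorem~\ref{CR duality} restricts to a dual equivalence between $\norm$ and $\nbasic$.

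I expect the main (and essentially only) obstacle to be a matter of bookkeeping rather than mathematics: one must confirm that the normality condition on a basic extension is invariant under the isomorphisms in $\mbasic$, so that ``$\alpha$ is a normal extension'' depends only on the isomorphism class and matches up with ``$X$ is a normal space'' across the equivalence. Concretely, if $\alpha : A \to B$ is isomorphic to $s^\flat : C(\beta X_B) \to B(X_B)$ as basic extensions, then $\alpha$ satisfies (T)/(BS)/(S) iff $s^\flat$ does, because the conditions are phrased entirely in terms of the lattice-ordered structure and the maps $\bigwedge, \bigvee$, all of which are preserved by the normal isomorphisms witnessing the isomorphism of extensions. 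Once this invariance is noted, the restriction of the equivalence is immediate and the proof reduces to a one-line appeal to Theorems~\ref{CR duality} and~\ref{thm: T --> normal}(3).
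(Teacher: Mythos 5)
Your proposal is correct and follows exactly the paper's route: the paper derives this theorem in one line from Theorem~\ref{CR duality} and Theorem~\ref{thm: T --> normal}(3), which is precisely your argument. The details you fill in (the restriction of an equivalence to full subcategories via matching object classes, and the isomorphism-invariance of conditions (T)/(BS)/(S)) are the routine bookkeeping the paper leaves implicit, and you handle them correctly.
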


Another consequence of 
Theorems~\ref{CR duality} and~\ref{thm: T --> normal}(3) is the following.

\begin{corollary} \label{cor: normal extension} \label{cor: normal}
\begin{enumerate}
\item[]
\item A basic extension $\alpha:A\to B$ is normal iff $X_B$ is a normal space and $\alpha_\flat:X_B\to Y_A$ is the
Stone-\v{C}ech compactification.
\item A completely regular space $X$ is normal iff $\iota : C^*(X) \to B(X)$ is normal.
\end{enumerate}
\end{corollary}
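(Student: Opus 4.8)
The plan is to read off both statements directly from the definition of a normal extension together with \cite[Prop.~7.4]{BMO18d} and Theorem~\ref{thm: T --> normal}(3), the bridge being the isomorphism $\alpha \cong s^\flat$ that becomes available once $\alpha$ is maximal. For part (1), recall that by definition $\alpha : A \to B$ is normal precisely when $\alpha$ is maximal and satisfies the equivalent conditions (T), (BS), (S). First I would invoke \cite[Prop.~7.4]{BMO18d}: $\alpha$ is maximal iff $A \in \ubal$ and $\alpha_\flat : X_B \to Y_A$ is equivalent to the Stone-\v{C}ech compactification of $X_B$, and in that case $\alpha$ is isomorphic in $\basic$ to $s^\flat : C(\beta X_B) \to B(X_B)$, where $s : X_B \to \beta X_B$. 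This already identifies the topological half of maximality with ``$\alpha_\flat$ is the Stone-\v{C}ech compactification,'' and reduces the problem to showing that, for maximal $\alpha$, condition (T)/(BS)/(S) holds iff $X_B$ is normal.

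For that second step I would transport (T)/(BS)/(S) along the isomorphism $\alpha \cong s^\flat$. The key point is that a morphism $(\rho,\sigma)$ realizing this isomorphism has $\sigma$ normal, hence $\sigma$ is a lattice isomorphism preserving arbitrary joins and meets; it therefore carries closed elements to closed elements, open to open, and countable meets/joins of $\alpha[A]$ to the corresponding countable meets/joins of $s^\flat[C(\beta X_B)]$, while respecting the order. Consequently $\alpha$ satisfies (T) (equivalently (BS), (S), by Theorem~\ref{thm: BS=T}) iff $s^\flat$ does. By Theorem~\ref{thm: T --> normal}(3), $s^\flat$ satisfies these conditions iff $X_B$ is normal. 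Combining the two steps gives: $\alpha$ is normal iff $\alpha$ is maximal and $X_B$ is normal, i.e. iff $X_B$ is normal and $\alpha_\flat$ is the Stone-\v{C}ech compactification (the uniform completeness of $A$ being subsumed under maximality).

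Part (2) is then the special case $\alpha = \iota : C^*(X) \to B(X)$. I would first check that $\iota$ is always a maximal basic extension: $C^*(X) \in \ubal$, and under the identifications $X_{B(X)} \cong X$ (via $\eta_X$) and $Y_{C^*(X)} \cong \beta X$, the map $\iota_\flat$ is the Stone-\v{C}ech compactification $s : X \to \beta X$; equivalently $\iota \cong s^\flat$ through the isomorphism $C^*(X) \cong C(\beta X)$ sending a bounded continuous function to its extension over $\beta X$. Since $\iota_\flat$ is automatically the Stone-\v{C}ech compactification, part (1) collapses to the assertion that $\iota$ is normal iff $X_{B(X)} \cong X$ is normal.

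The step I expect to be the main obstacle is the isomorphism-transfer in the second paragraph: one must verify carefully that the normal isomorphism intertwining $\alpha$ and $s^\flat$ genuinely sends the data witnessing (T)/(BS)/(S) on one side to witnesses on the other, i.e. that preservation of arbitrary joins and meets really does convert the countable meets $\bigwedge_n \alpha(a_n)$ and joins $\bigvee_n \alpha(b_n)$ into the analogous expressions over $s^\flat[C(\beta X_B)]$ with the same intervening inequalities. Once this compatibility is in place, the remainder is a direct assembly of \cite[Prop.~7.4]{BMO18d} and Theorem~\ref{thm: T --> normal}(3).
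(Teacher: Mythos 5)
Your proposal is correct and takes essentially the same route as the paper: the paper's proof of (1) simply cites Theorems~\ref{CR duality} and~\ref{thm: T --> normal}(3), which encapsulate exactly your combination of \cite[Prop.~7.4]{BMO18d} with the transfer of (T)/(BS)/(S) along the isomorphism $\alpha \cong s^\flat$, and its proof of (2) invokes the isomorphism of $\iota$ with $s^\flat$ (via \cite[Example~4.8(2)]{BMO18d}), just as you do. The isomorphism-transfer step you flag as the main obstacle is precisely the step the paper treats as immediate (a normal isomorphism of basic extensions carries closed/open elements and countable meets/joins over $\alpha[A]$ to the corresponding data over $s^\flat[C(\beta X_B)]$), so your write-up is just a more detailed version of the paper's argument.
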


\begin{proof}
(1) is immediate from 
Theorems~\ref{CR duality} and~\ref{thm: T --> normal}(3), and
(2) follows from Theorem~\ref{thm: T --> normal}(3) since $s^\flat$ and $\iota$ are isomorphic basic extensions (see \cite[Example~4.8(2)]{BMO18d}).
\[
\begin{tikzcd}[column sep=5pc]
C(\beta X) \arrow[r, "s^\flat"]  \arrow[d, "s^*"'] & B(X) \arrow[d, equal] \\
 C^*(X) \arrow[r, "\iota"'] & B(X)
\end{tikzcd}
\]
\end{proof}

From this we obtain the following well-known insertion theorem \cite{Kat51, Kat53, Ton52}.

\begin{theorem}[Kat\v{e}tov-Tong] \label{KT}
A $T_1$-space $X$ is normal iff for each $f,g \in B(X)$ with $f$ upper semicontinuous and $g$ lower semicontinuous, $f \le g$ implies that there is
$c \in C^*(X)$ with $f \le c \le g$.
\end{theorem}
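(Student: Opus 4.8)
The plan is to deduce the Kat\v{e}tov--Tong theorem from Corollary~\ref{cor: normal}(2) together with Lemma~\ref{lem: semicontinuous}, handling the two implications separately. The whole point will be that the purely algebraic condition (S) on $\iota : C^*(X) \to B(X)$ is exactly the insertion property, once semicontinuity is translated into the language of closed and open elements.

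For the forward implication, suppose $X$ is a $T_1$ normal space. First I would note that a $T_1$ normal space is completely regular (by Urysohn's lemma, a point, being closed, can be separated from a disjoint closed set), so $X \in \creg$ and the machinery of Section~2 applies. By Corollary~\ref{cor: normal}(2), the inclusion $\iota : C^*(X) \to B(X)$ is a normal basic extension, hence it satisfies condition (S) (equivalently, by Theorem~\ref{thm: BS=T}, (T) and (BS)). Since $\iota$ is isomorphic to $s^\flat : C(\beta X) \to B(X)$, Lemma~\ref{lem: semicontinuous} lets me translate the hypotheses: given $f$ upper semicontinuous and $g$ lower semicontinuous with $f \le g$, the element $f$ is closed and the element $g$ is open relative to $\iota$. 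Applying (S) produces families $\{a_n\},\{b_n\} \subseteq C^*(X)$ with
\[
f \le \bigwedge_n \iota(a_n) = \bigvee_n \iota(b_n) \le g .
\]
Setting $c := \bigwedge_n \iota(a_n) = \bigvee_n \iota(b_n)$, this single element is simultaneously closed and open, hence clopen, and so by Lemma~\ref{lem: semicontinuous}(3) it is continuous. Being an element of $B(X)$ it is bounded, so $c \in C^*(X)$ and $f \le c \le g$, as required.

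For the reverse implication, I would argue exactly as in the proof of Theorem~\ref{thm: T --> normal}(1), but now invoking the insertion hypothesis directly instead of condition (BS). Given a closed set $C$ contained in an open set $U$, the characteristic functions satisfy $\chi_C \le \chi_U$ with $\chi_C$ upper semicontinuous and $\chi_U$ lower semicontinuous. The insertion hypothesis yields $c \in C^*(X)$ with $\chi_C \le c \le \chi_U$; since $C \subseteq U$, this forces $c = 1$ on $C$ and $c = 0$ on $X \setminus U$. Then $V := c^{-1}(1/2,\infty)$ is open, $D := c^{-1}[1/2,\infty)$ is closed, and $C \subseteq V \subseteq D \subseteq U$, which witnesses normality of $X$.

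The only genuine subtlety lies in the forward direction, and it is twofold. First, one must recognize that the $T_1$ hypothesis is precisely what upgrades normality to complete regularity, so that $X \in \creg$ and Corollary~\ref{cor: normal}(2) becomes available; without $T_1$ the functional representation of Section~2 need not apply. Second, the decisive algebraic fact is that condition (S)---as opposed to (T) or (BS)---delivers a \emph{single} element $c$ that is at once a countable meet and a countable join of images from $C^*(X)$, and it is exactly this coincidence that makes $c$ clopen, hence continuous, via Lemma~\ref{lem: semicontinuous}(3). Everything else is a routine passage between the topological notion of semicontinuity and the algebraic notions of closed and open elements.
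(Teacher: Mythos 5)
Your proposal is correct and takes essentially the same approach as the paper: the forward direction deduces condition (S) for $\iota : C^*(X)\to B(X)$ from Corollary~\ref{cor: normal}(2) and uses Lemma~\ref{lem: semicontinuous} to conclude that the common countable meet/join is continuous (the paper phrases this with the isomorphic extension $s^\flat : C(\beta X)\to B(X)$, a purely cosmetic difference), and your reverse direction is the same standard separation argument, differing from the paper's only in the choice of cut levels and in producing a closed/open sandwich $C \subseteq V \subseteq D \subseteq U$ rather than directly separating two disjoint closed sets. Both variants are equivalent and your treatment of the $T_1$ hypothesis (upgrading normality to complete regularity so the machinery applies) is a point the paper leaves implicit.
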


\begin{proof}
The proof of the right-to-left direction is standard. Suppose that $A,B$ are disjoint closed sets. Let $W = X \setminus B$. Then $A \subseteq W$ and $W$ is open. Therefore, $\chi_A$ is upper semicontinuous, $\chi_W$ is lower semicontinuous, and $\chi_A \le \chi_W$. The Kat\v{e}tov-Tong condition yields $c \in C^*(X)$ with $\chi_A \le c \le \chi_W$.  Set $U = c^{-1}(1/3, \infty)$ and $V = X \setminus c^{-1}[2/3, 1]$. Then it is straightforward to see that $A \subseteq U$, $B \subseteq V$, and $U \cap V = \varnothing$. Thus, $X$ is normal.

For the left-to-right direction, if $X$ is normal, by Corollary~\ref{cor: normal}(2),
there are countable families $\{a_n\}, \{b_n\} \subseteq C(\beta X)$ with $f \le \bigwedge s^\flat(a_n) = \bigvee s^\flat(b_n) \le g$.
The function $c = \bigwedge s^\flat(a_n) = \bigvee s^\flat(b_n)$ is both upper and lower semicontinouous on $X$ by
Lemma~\ref{lem: semicontinuous}. Thus, $c \in C^*(X)$ and $f \le c \le g$.
\end{proof}

A basic extension $\alpha : A \to B$ is normal if it is both maximal and satisfies (S).
We show that these conditions can be replaced by a single condition.

\begin{definition} \label{U2}
Let $\alpha : A \to B$ be a basic extension, $f\in B$ closed, and $g\in B$ open.
\begin{enumerate}
\item[(N)] If $f \le g$, then there is $a \in \widehat{A}$ with $f \le \widehat{\alpha}(a) \le g$.
\end{enumerate}
\end{definition}

\begin{remark} \label{rem: 3.15} 
For a basic extension $\alpha : A \to B$, if $A$ is uniformly complete, then $\alpha : A \to B$ satisfies (N) iff $f$ closed, $g$ open, and $f \le g$ imply there is $a \in A$ with
$f \le \alpha(a) \le g$.
\end{remark}

\begin{lemma} \label{lem: alpha versus alpha hat}
Let $\alpha : A \to B$ be a basic extension.
\begin{enumerate}
\item If $a \in \widehat{A}$, then there is an increasing \emph{(}resp.~decreasing\emph{)} sequence $\{a_n\}$ from $A$ such that
$a = \lim_{n\to\infty} \zeta_A(a_n)$. 
\item If $a \in \widehat{A}$, then $\widehat{\alpha}(a)$ is a countable join and a countable meet from $\alpha[A]$. 

\item If $f \in B$, then $f$ is closed with respect to $\alpha$ iff $f$ is closed with respect to $\widehat{\alpha}$. 

\item If $g \in B$, then $g$ is open with respect to $\alpha$ iff $g$ is open with respect to $\widehat{\alpha}$. 
\end{enumerate}
\end{lemma}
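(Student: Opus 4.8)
The plan is to establish (1) and (2) first, since (3) and (4) will follow formally. For (1), I would start from the fact that $\widehat{A}=C(Y_A)$ is the uniform completion of $A$, so each $a\in\widehat A$ is a uniform limit $a=\lim_n\zeta_A(c_n)$ for some sequence $\{c_n\}\subseteq A$. Passing to a subsequence I may assume $\|\zeta_A(c_n)-a\|<2^{-n}$, which gives $\zeta_A(c_n)-2^{-n}\le a$ for each $n$; here I use that $\zeta_A$ is a unital $\ell$-algebra monomorphism and hence an order-embedding, and that $\zeta_A$ fixes the scalars. Setting $a_n:=(c_1-2^{-1})\vee\cdots\vee(c_n-2^{-n})\in A$ then yields an increasing sequence with $\zeta_A(a_n)\le a$ and $\|\zeta_A(a_n)-a\|\le 2^{-n+1}\to 0$, proving the increasing case. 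The decreasing case is symmetric, using $c_n+2^{-n}$ and finite meets.

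For (2), I would apply $\widehat\alpha$ to the monotone sequence from (1). Since $\widehat\alpha$ is a morphism in $\bal$, it is norm-contractive, so from $\zeta_A(a_n)\to a$ we obtain $\alpha(a_n)=\widehat\alpha(\zeta_A(a_n))\to\widehat\alpha(a)$ uniformly in $B$, where $\{\alpha(a_n)\}$ is increasing because $\{a_n\}$ is. The crucial step is a general fact in a bounded archimedean $\ell$-algebra: a uniformly convergent increasing sequence converges to its least upper bound. Indeed, if $b_n\uparrow$ and $b_n\to b$, then $b_n-b\le\|b_m-b\|\cdot 1$ for all $m\ge n$ forces $b_n\le b$ by the archimedean property, while for any upper bound $c$ one has $b-c\le\|b-b_n\|\cdot 1\to 0$, forcing $b\le c$; hence $b=\bigvee_n b_n$. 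Applying this to $\{\alpha(a_n)\}$ gives $\widehat\alpha(a)=\bigvee_n\alpha(a_n)$, a countable join from $\alpha[A]$, and the decreasing sequence gives the corresponding countable meet.

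Parts (3) and (4) are then formal. For (3), the forward implication is immediate since $\alpha=\widehat\alpha\circ\zeta_A$ gives $\alpha[A]\subseteq\widehat\alpha[\widehat A]$, so any meet from $\alpha[A]$ is a meet from $\widehat\alpha[\widehat A]$. For the converse, I would write $f=\bigwedge_i\widehat\alpha(a_i)$ with $a_i\in\widehat A$; by (2) each $\widehat\alpha(a_i)$ is a countable meet from $\alpha[A]$, and because $B$ is Dedekind complete these meets exist and associate into a single meet $f=\bigwedge_{i,n}\alpha(a_{i,n})$ from $\alpha[A]$, so $f$ is closed relative to $\alpha$. Part (4) follows from (3) by negation, using that $g$ is open if and only if $-g$ is closed (Remark~\ref{rem: closed}(3)), together with the fact that $\widehat\alpha(-a)=-\widehat\alpha(a)$.

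The main obstacle is the monotone-convergence fact invoked in (2): that a uniformly convergent monotone sequence has its limit as supremum (respectively infimum). This is the one place where the interplay between the norm and the order, and in particular the archimedean hypothesis, genuinely enters; the remaining arguments are essentially bookkeeping about meets, joins, and the factorization $\alpha=\widehat\alpha\circ\zeta_A$.
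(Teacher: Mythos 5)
Your proposal is correct and follows essentially the same route as the paper: the same shift-and-join construction for part (1), the same use of norm continuity of $\widehat{\alpha}$ applied to the monotone sequence in part (2), and the same formal arguments for parts (3) and (4). The only difference is that where the paper cites the fact that $\le$ is a closed relation on $B$ to identify the uniform limit of the increasing sequence with its least upper bound, you prove this monotone-convergence fact directly from the archimedean property, which is a self-contained justification of the same step.
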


\begin{proof}
(1). Since $\widehat{A} = C(Y_A)$ is the uniform completion of $A$, there is a
sequence $\{c_n\}$ in $A$ with $\lim_{n\to\infty} \zeta_A(c_n) = a$. This means $\| a- \zeta_A(c_n)\| \to 0$. If
$r_n = \|a- \zeta_A(c_n)\|$, then $-r_n \le a- \zeta_A(c_n) \le r_n$, so $\zeta_A(c_n) - r_n \le a$. Setting
$b_n = c_n - r_n$, we have $\zeta_A(b_n) \le a$ and $\lim_{n\to \infty} \zeta_A(b_n) = a$. Let
$a_n = b_1 \vee \cdots \vee b_n$. Then $\{a_n\}$ is an increasing sequence with $\{\zeta_A(a_n) \}$ bounded by $a$.
Because $\zeta_A(b_n) \le \zeta_A(a_n) \le a$ for each $n$, it follows that $\lim_{n\to \infty} \zeta_A(a_n) = a$.
The proof for finding a decreasing sequence converging to $a$ is dual.

(2). 
By (1), there is an increasing sequence $\{a_n\}$ from $A$ with $a = \lim_{n\to\infty} \zeta_A(a_n)$. 
Since morphisms in $\bal$ are continuous with respect to the norm topology \cite[p.~444]{BMO13a}, 
$\widehat{\alpha}(a) = \lim_{n\to\infty}\widehat{\alpha}(\zeta_A(a_n)) = \lim_{n\to\infty} \alpha(a_n)$. As 
$\alpha(a_n) \le \widehat{\alpha}(a)$ for each $n$, we see that $\bigvee_n \alpha(a_n) \le \widehat{\alpha}(a)$. 
If $b \in B$ with $\alpha(a_n) \le b$ for each $n$, then since $\le$ is a closed relation on $B$, we have 
$\lim_{n\to\infty} \alpha(a_n) \le b$. Thus, $\widehat{\alpha}(a) = \bigvee_n \alpha(a_n)$, a countable join from 
$\alpha[A]$. A similar argument shows that $\widehat{\alpha}(a)$ is also a countable meet from $\alpha[A]$.

(3). First suppose that $f$ is closed with with respect to $\alpha$. Then $f$ is a meet from $\alpha[A]$. 
Because $\alpha[A] \subseteq \widehat{\alpha}\left[\widehat{A}\right]$, it follows that $f$ is a meet from 
$\widehat{\alpha}\left[\widehat{A}\right]$. Therefore, $f$ is closed with respect to $\widehat{\alpha}$. Conversely, suppose 
$f$ is closed with respect to $\widehat{\alpha}$. Then $f = \bigwedge \widehat{\alpha}[T]$ for some $T \subseteq \widehat{A}$. 
By (2), each $\widehat{\alpha}(a) \in \widehat{\alpha}[T]$ is a meet from $\alpha[A]$. Thus, $f$ is a meet from $\alpha[A]$, 
and hence $f$ is closed with respect to $\alpha$.

(4). Since $g$ is open 
iff $-g$ is closed by Remark~\ref{rem: closed}(3), this follows from (3) applied to $f = -g$.
\end{proof}

\begin{remark}
For a basic extension $\alpha : A \to B$, we cannot replace $\widehat{A}$ by $A$ in the definition of (N). To see this, suppose that $A$ is not uniformly complete,
so there is $a\in \widehat{A}\setminus \zeta_A[A]$. Let $f = g = \widehat{\alpha}(a)$. 
Then $f,g$ are closed and open relative to $\alpha$ by (3) and (4) of Lemma~\ref{lem: alpha versus alpha hat}. If there is $c \in A$ with 
$f \le \alpha(c) \le g$, then $a=\zeta_A(c)$, a contradiction.
\end{remark}

\begin{proposition}
Let $\alpha : A \to B$ be a basic extension.
\begin{enumerate}
\item $\alpha : A \to B$ satisfies \emph{(N)} iff $\widehat{\alpha} : \widehat{A} \to B$ satisfies \emph{(N)}.

\item $\alpha$ satisfies either of \emph{(T)}, \emph{(BS)}, \emph{(S)} iff so does $\widehat{\alpha}$. 
\end{enumerate}
\end{proposition}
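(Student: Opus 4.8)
The plan is to reduce both parts to Lemma~\ref{lem: alpha versus alpha hat}, which records that $\alpha[A]$ and $\widehat\alpha[\widehat A]$ give rise to the same closed and open elements of $B$, and that each element of $\widehat\alpha[\widehat A]$ is simultaneously a countable meet and a countable join of elements of $\alpha[A]$. Throughout I use that $\widehat\alpha : \widehat A \to B$ is itself a basic extension (it is the reflection $r(\alpha)\in\ubasic$), so that the earlier results apply to it as well.

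For part (1), I would first note that $\widehat A = C(Y_A)$ is uniformly complete, so $\widehat\alpha$ already lies in $\ubasic$ and equals its own reflection, whence $\widehat{\widehat\alpha} = \widehat\alpha$. Thus condition (N) for $\widehat\alpha$ reads: whenever $f$ is closed and $g$ is open relative to $\widehat\alpha$ with $f \le g$, there is $a \in \widehat A$ with $f \le \widehat\alpha(a) \le g$. By Lemma~\ref{lem: alpha versus alpha hat}(3),(4), ``closed (resp.\ open) relative to $\widehat\alpha$'' coincides with ``closed (resp.\ open) relative to $\alpha$'', so this is verbatim condition (N) for $\alpha$. Hence the two conditions are literally the same and part (1) follows.

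For part (2), since (T), (BS), and (S) are equivalent for every basic extension by Theorem~\ref{thm: BS=T}, and both $\alpha$ and $\widehat\alpha$ are basic extensions, it suffices to prove that $\alpha$ satisfies (T) iff $\widehat\alpha$ does. The forward implication is routine: to verify (T) for $\widehat\alpha$, take $f$ closed and $g$ open relative to $\widehat\alpha$ with $f \le g$, which by Lemma~\ref{lem: alpha versus alpha hat}(3),(4) are also closed and open relative to $\alpha$; applying (T) for $\alpha$ gives $\{a_n\},\{b_n\}\subseteq A$ with $f \le \bigwedge_n \alpha(a_n) \le \bigvee_n \alpha(b_n) \le g$, and since $\alpha = \widehat\alpha \circ \zeta_A$ we have $\alpha(a_n) = \widehat\alpha(\zeta_A(a_n))$ and $\alpha(b_n) = \widehat\alpha(\zeta_A(b_n))$, so the same inequalities witness (T) for $\widehat\alpha$.

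The content lies in the reverse implication, where Lemma~\ref{lem: alpha versus alpha hat}(2) does the work. Suppose $\widehat\alpha$ satisfies (T) and let $f$ be closed, $g$ open relative to $\alpha$ with $f \le g$; these are closed and open relative to $\widehat\alpha$ by Lemma~\ref{lem: alpha versus alpha hat}(3),(4), so (T) for $\widehat\alpha$ yields $\{a_n\},\{b_n\}\subseteq \widehat A$ with $f \le \bigwedge_n \widehat\alpha(a_n) \le \bigvee_n \widehat\alpha(b_n) \le g$. By Lemma~\ref{lem: alpha versus alpha hat}(2), each $\widehat\alpha(a_n)$ is a countable meet $\bigwedge_m \alpha(c_{n,m})$ and each $\widehat\alpha(b_n)$ a countable join $\bigvee_m \alpha(d_{n,m})$ of elements of $\alpha[A]$. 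Then $\bigwedge_n \widehat\alpha(a_n) = \bigwedge_{n,m} \alpha(c_{n,m})$ is a single countable meet from $\alpha[A]$ and $\bigvee_n \widehat\alpha(b_n) = \bigvee_{n,m} \alpha(d_{n,m})$ is a single countable join from $\alpha[A]$; re-indexing the two countable double families by a single index gives families $\{c_k\},\{d_k\}\subseteq A$ with $f \le \bigwedge_k \alpha(c_k) \le \bigvee_k \alpha(d_k) \le g$, which is (T) for $\alpha$. I expect no genuine obstacle here: the only thing to check is that a countable family of countable meets (resp.\ joins) collapses to one countable meet (resp.\ join) with the chain of inequalities preserved, which is immediate bookkeeping once Lemma~\ref{lem: alpha versus alpha hat}(2) is applied.
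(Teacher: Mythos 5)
Your proof is correct and follows essentially the same route as the paper: part (1) via Remark~\ref{rem: 3.15} together with parts (3) and (4) of Lemma~\ref{lem: alpha versus alpha hat}, and part (2) by transferring witnesses using $\alpha = \widehat{\alpha}\circ\zeta_A$ in one direction and Lemma~\ref{lem: alpha versus alpha hat}(2) with re-indexing in the other. The only (immaterial) difference is that you verify condition (T) while the paper verifies (S), both relying on Theorem~\ref{thm: BS=T} to cover the remaining conditions.
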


\begin{proof}
(1). This follows from Remark~\ref{rem: 3.15} and (3) and (4) of Lemma~\ref{lem: alpha versus alpha hat}.

(2). Suppose $\alpha$ satisfies (S). Let $f \le g$ with $f \in B$ closed and $g\in B$ open relative to $\widehat{\alpha}$. 
By (3) and (4) of Lemma~\ref{lem: alpha versus alpha hat}, $f$ is closed and $g$ is open relative to $\alpha$. Since $\alpha$ satisfies (S), there are $a_n, b_n \in A$ with 
$f \le \bigwedge_n \alpha(a_n) = \bigvee_n \alpha(b_n) \le g$. Therefore, 
$f \le \bigwedge_n \widehat{\alpha}(\zeta_A(a_n)) = \bigvee_n \widehat{\alpha}(\zeta_A(b_n)) \le g$, so $\widehat{\alpha}$ satisfies (S). 
Conversely, suppose that $\widehat{\alpha}$ satisfies (S). Let $f\le g$ with $f$ closed and $g$ open relative to $\alpha$. Using (3) and (4) of Lemma~\ref{lem: alpha versus alpha hat} again, 
$f$ is closed and $g$ is open relative to $\widehat{\alpha}$, so there are $c_n, d_n \in \widehat{A}$ with 
$f \le \bigwedge_n \widehat{\alpha}(c_n) = \bigvee_n \widehat{\alpha}(d_n) \le g$. By Lemma~\ref{lem: alpha versus alpha hat}(2), there are $a_{nm}, b_{nm} \in A$ with 
$\widehat{\alpha}(c_n) = \bigwedge_m \alpha(a_{nm})$ and $\widehat{\alpha}(d_n) = \bigvee_m \alpha(b_{nm})$.
Thus, $f \le \bigwedge_{n,m} \alpha(a_{nm}) = \bigvee_{n,m} \alpha(b_{nm}) \le g$. Consequently, $\alpha$ satisfies (S).

\end{proof}

\begin{theorem} \label{U2 implies normal}
Let $e : X \to Y$ be a compactification and $e^\flat : C(Y) \to B(X)$ the corresponding basic extension.
Then $e^\flat$ satisfies \emph{(N)} iff $e^\flat$ is normal.
\end{theorem}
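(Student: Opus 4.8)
The plan is to exploit that $A=C(Y)$ is uniformly complete (being the function algebra of a compact Hausdorff space), so $\zeta_A$ is an isomorphism and $\widehat{A}$ may be identified with $A$. Consequently, by Remark~\ref{rem: 3.15}, condition (N) for $e^\flat$ is equivalent to: whenever $f$ is closed, $g$ is open, and $f\le g$, there is $a\in C(Y)$ with $f\le e^\flat(a)\le g$. I will use this reformulation throughout, together with two facts: the clopen elements relative to $e^\flat$ are exactly the continuous functions (Lemma~\ref{lem: semicontinuous}(3)), and maximality of $e^\flat$ is equivalent to $e$ being the Stone-\v{C}ech compactification by \cite[Prop.~7.4]{BMO18d}, which by the universal property of $\beta X$ amounts to $e^\flat[C(Y)]=C^*(X)$.

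For the direction (N) $\Rightarrow$ normal, I would first establish maximality. Given any $c\in C^*(X)$, Lemma~\ref{lem: semicontinuous}(3) shows $c$ is clopen, hence simultaneously closed and open; applying (N) to $f=g=c$ produces $a\in C(Y)$ with $c\le e^\flat(a)\le c$, so $c=e^\flat(a)$. Thus $e^\flat[C(Y)]=C^*(X)$, which says $e$ is the Stone-\v{C}ech compactification, so $e^\flat$ is maximal. Condition (S) is then essentially free: for closed $f\le$ open $g$, (N) gives $a\in C(Y)$ with $f\le e^\flat(a)\le g$, and taking the constant sequences $a_n=b_n=a$ yields $f\le\bigwedge_n e^\flat(a_n)=e^\flat(a)=\bigvee_n e^\flat(b_n)\le g$, which is (S). Hence $e^\flat$ is maximal and satisfies (S), i.e.\ is normal.

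For the converse, suppose $e^\flat$ is normal, so it is maximal and satisfies (S). Given closed $f\le$ open $g$, condition (S) yields $a_n,b_n\in C(Y)$ with $f\le\bigwedge_n e^\flat(a_n)=\bigvee_n e^\flat(b_n)\le g$; call this common value $u$. Being a countable meet of continuous functions, $u$ is upper semicontinuous, and being a countable join of continuous functions it is lower semicontinuous, so $u\in C^*(X)$ by Lemma~\ref{lem: semicontinuous}. Since $e^\flat$ is maximal, $e^\flat[C(Y)]=C^*(X)$, so $u=e^\flat(a)$ for some $a\in C(Y)$, giving $f\le e^\flat(a)\le g$. By Remark~\ref{rem: 3.15} this is exactly (N).

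I expect the only real subtlety to be the bookkeeping identifying maximality of $e^\flat$ with the equality $e^\flat[C(Y)]=C^*(X)$ via \cite[Prop.~7.4]{BMO18d} and the characterization of $\beta X$ among compactifications. Once this dictionary is in place, both directions reduce to a single observation: (S) produces an insertion that is a continuous function, while (N) produces an insertion lying in the image of $e^\flat$, and these two coincide precisely when every bounded continuous function on $X$ extends to $Y$.
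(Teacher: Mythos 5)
Your proof is correct, and while your right-to-left direction coincides with the paper's argument, your left-to-right direction takes a genuinely different route. The paper proves (N) $\Rightarrow$ normal by applying (N) to characteristic functions $\chi_C \le \chi_{X\setminus D}$ for disjoint closed sets $C,D \subseteq X$, producing disjoint closed subsets of $Y$ containing $\cl_Y(e(C))$ and $\cl_Y(e(D))$; it then invokes \cite[Cor.~3.6.4]{Eng89} (the disjoint-closures characterization of the Stone-\v{C}ech compactification, which simultaneously forces $X$ normal and $e \cong s$) and finally appeals to Theorem~\ref{thm: T --> normal}, whose proof runs through Urysohn's lemma and Tong's theorem, to obtain condition (S). You instead observe that (N) applied with $f=g=c$ to an arbitrary $c \in C^*(X)$ --- which is clopen by Lemma~\ref{lem: semicontinuous}(3) --- forces $c \in e^\flat[C(Y)]$, so $e^\flat[C(Y)] = C^*(X)$, and that (S) follows from (N) outright by taking constant sequences. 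This is shorter and avoids re-invoking the Kat\v{e}tov-Tong machinery; what it costs is reliance on a different standard fact from the literature, namely that a compactification over which every bounded continuous real-valued function extends is equivalent to $\beta X$ (the function-extension characterization, e.g.\ \cite[Thm.~3.6.1]{Eng89}), where the paper uses the disjoint-closures characterization instead. Both your argument and the paper's silently use the identification of the compactification $(e^\flat)_\flat$ with $e$ furnished by the dual equivalence of Theorem~\ref{duality cptf} when translating maximality via \cite[Prop.~7.4]{BMO18d}, so you are on equal footing there. Your converse direction --- (S) produces $u$ that is simultaneously a countable meet and a countable join from $e^\flat[C(Y)]$, hence continuous by Lemma~\ref{lem: semicontinuous}, hence in $e^\flat[C(Y)]$ by maximality --- is essentially the paper's proof, phrased via the equality $e^\flat[C(Y)]=C^*(X)$ rather than by reducing to $e=s$.
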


\begin{proof}
Suppose that $e^\flat$ satisfies (N). To show that $e^\flat$ is normal, by Theorem~\ref{thm: T --> normal} it is sufficient to show that $X$ is normal and $e$ is isomorphic to the Stone-\v{C}ech compactification. For this, by
\cite[Cor.~3.6.4]{Eng89} it suffices to show that if $C, D$ are disjoint closed sets in $X$, then
$\cl_Y(e(C)) \cap \cl_Y(e(D)) = \varnothing$. Let $U = X \setminus D$. Then $C \subseteq U$, so $\chi_C \le \chi_U$.
By Lemma~\ref{lem: closed}, $\chi_C$ is closed and $\chi_U$ is open relative to $e^\flat$. As $C(Y)$ is uniformly complete,
by (N), there is $c \in C(Y)$ with $\chi_C \le e^\flat(c) \le \chi_U$. Therefore,
$C \subseteq e^{-1}c^{-1}[1/3, \infty) \subseteq e^{-1}c^{-1}(2/3, \infty) \subseteq U$. We have thus found a closed set
$C':=c^{-1}[1/3, \infty)$ of $Y$ and an open set $U':=c^{-1}(2/3, \infty)$ of $Y$ with $e(C) \subseteq C' \subseteq U'$
and $e^{-1}(U') \subseteq U$. Set $D' = Y \setminus U'$. Then $e(D) \subseteq D'$ and $C' \cap D' = \varnothing$.
Consequently, $\cl_Y(e(C)) \cap \cl_Y(e(D)) = \varnothing$, yielding that $X$ is normal and $e$ is isomorphic to $s$.

For the converse, by Theorem~\ref{thm: T --> normal} it suffices to assume that $e=s$ and $s$ satisfies (S). Let $f\in B(X)$ be closed, $g\in B(X)$ open, and
$f\le g$. By (S), there are $\{a_n\},\{b_n\}\subseteq C(Y)$ such that $f\le \bigwedge s^\flat(a_n)=\bigvee s^\flat(b)\le g$.
Set $c=\bigwedge s^\flat(a_n)=\bigvee s^\flat(b)$. By Lemma~\ref{lem: semicontinuous}, $c$ is both upper and lower semicontinuous
on $X$, hence $c\in C^*(X)$.
But then there is $a \in C(\beta X)$ with $c = s^\flat(a)$. Thus, $s^\flat$ satisfies (N).
\end{proof}

As follows from Theorem~\ref{U2 implies normal}, if $e^\flat : C(Y) \to B(X)$ satisfies (N), then $e^\flat$ is maximal. We next show that 
$e^\flat$ satisfying the equivalent conditions (T), (BS), and (S) does not imply that $e^\flat$ is maximal, so (N) is strictly stronger than 
(T), (BS), and (S). 

\begin{example} \label{ex: (T) weaker than (N)}
Let $X$ be the set of natural numbers with the discrete topology and let $e:X \to Y$ be the one-point compactification. Clearly
$e : X \to Y$ is not isomorphic to the Stone-\v{C}ech compactification, so
$e^\flat$ is not a maximal extension (see Section 2.6). We show that $e^\flat$ satisfies (T). Let $f \in B(X)$. Since $X$ is discrete, $f$ is continuous,
and so by Lemma~\ref{lem: semicontinuous}, $f$ is closed. Thus, there is $S \subseteq C(Y)$ with $f = \bigwedge e^\flat[S]$.
We show that there is a countable subset $S'$ of $S$ with $f = \bigwedge e^\flat[S']$. For each $n,m \in \mathbb{N}$ there is $c_{nm} \in S$
with $c_{nm}(n) \le f(n) + 1/m$. From this it follows that $f = \bigwedge_{n,m} c_{nm}$.
Similarly, if $g \in B(X)$, then $g$ is open, and if $g = \bigvee e^\flat[T]$, then there is a countable subset $T' \subseteq T$ with
$g = \bigvee e^\flat[T']$. Now, if $f,g \in B(X)$, with $f \le g$, then write $f = \bigwedge e^\flat[S]$ and $g  = \bigvee e^\flat[T]$
for some $S,T \subseteq C(Y)$. From the above, there are countable $S', T'$ with $f = \bigwedge e^\flat[S']$ and
$g = \bigvee e^\flat[T']$. Thus, $f \le \bigwedge e^\flat[S'] \le \bigvee e^\flat[T'] \le g$, which shows that $e^\flat$ satisfies (T).
\end{example}

We conclude this section by relating (N) to a condition formulated by  Blatter and Seever based on a technique of  Dieudonn\'{e} (see \cite[Lem.~1.2]{BS75} and \cite[p.~21]{Edw66}). What we call (D) is a coarser version of a condition that appears in \cite[Lem.~1.2]{BS75}.

\begin{definition}
Let $\alpha : A \to B$ be a basic extension, $f\in B$ closed, $g\in B$ open, and $0 < \varepsilon \in \mathbb{R}$.
\begin{enumerate}
\item[(D)]
If $f+\varepsilon \le g$, then there is $a \in \widehat{A}$ with $f \le \widehat{\alpha}(a) \le g$.
\end{enumerate}
\end{definition}

\begin{theorem} \label{thm: N = D}
A basic extension $\alpha : A \to B$ satisfies \emph{(N)} iff it satisfies \emph{(D)}.
\end{theorem}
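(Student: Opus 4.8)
The plan is to prove the two implications separately: the forward direction is immediate, and the reverse is the substance, requiring an iterative approximation in the spirit of Dieudonn\'e. For (N) $\Rightarrow$ (D), suppose $f$ is closed, $g$ is open, and $f + \varepsilon \le g$ for some $0 < \varepsilon \in \mathbb{R}$. Then $f \le f + \varepsilon \le g$, so (N) directly supplies $a \in \widehat{A}$ with $f \le \widehat{\alpha}(a) \le g$, which is exactly the conclusion of (D).

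For (D) $\Rightarrow$ (N), I would fix $f$ closed, $g$ open with $f \le g$ and build $a \in \widehat{A}$ with $f \le \widehat{\alpha}(a) \le g$ as a uniform limit. Throughout I will use that by Lemma~\ref{lem: alpha versus alpha hat}(2) every element $\widehat{\alpha}(c)$ with $c \in \widehat{A}$ is both a countable meet and a countable join from $\alpha[A]$, hence is simultaneously closed and open; call such an element \emph{clopen}. The obstacle is that (D) inserts only when there is a genuine scalar gap between the closed and the open element, whereas here the gap may vanish. The device to create a gap is: to insert strictly below an open element $v$ while staying above a closed element $u \le v$, lower $u$ by $\eta > 0$ and apply (D) to the pair $(u - \eta, v)$ with $\varepsilon = \eta$, valid since $(u-\eta) + \eta = u \le v$; this produces $\widehat{\alpha}(c)$ with $u - \eta \le \widehat{\alpha}(c) \le v$, i.e.\ an \emph{exact} upper bound at the cost of an $\eta$-error in the lower bound.

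Concretely, fix $\delta > 0$ and set $\eta_n = \mu_n = \delta_n = \delta/2^n$. I would construct clopen elements $c_n = \widehat{\alpha}(a_n)$ with $f - \eta_n \le c_n \le g + \delta_n$ and $\|c_{n+1} - c_n\| \le \mu_n + \eta_{n+1}$. The base case $c_0$ comes from one application of the gap device to $(f - \eta_0,\, g + \delta_0)$. Given $c_n$, form the closed element $f_n = f \vee (c_n - \mu_n)$ and the open element $g_n = (g + \delta_{n+1}) \wedge (c_n + \mu_n)$, which are legitimately closed and open by Remark~\ref{rem: closed} precisely because $c_n$ is clopen. A short check using $f \le g$, the bounds on $c_n$, and the (built-in) inequalities $\mu_n \ge \eta_n$ and $\mu_n \ge \delta_n - \delta_{n+1}$ gives $f_n \le g_n$. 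Applying the gap device to $(f_n, g_n)$ with $\varepsilon = \eta_{n+1}$ yields $c_{n+1}$ with $f_n - \eta_{n+1} \le c_{n+1} \le g_n$; reading off the two defining inequalities of $f_n$ and $g_n$ gives $f - \eta_{n+1} \le c_{n+1} \le g + \delta_{n+1}$ together with $-\mu_n - \eta_{n+1} \le c_{n+1} - c_n \le \mu_n$, as required.

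Finally, since $\sum_n (\mu_n + \eta_{n+1}) < \infty$, the sequence $\{c_n\}$ is uniformly Cauchy in $B$, and $B$ is uniformly complete (being Dedekind complete), so $c_n \to c$ for some $c \in B$. Because $\alpha$ is isometric (monomorphisms in $\bal$ preserve the norm) and $\zeta_A[A]$ is dense in $\widehat{A}$, the map $\widehat{\alpha}$ is isometric and its image is uniformly closed in $B$; hence $c = \widehat{\alpha}(a)$ for some $a \in \widehat{A}$. Passing to the limit in $f - \eta_n \le c_n \le g + \delta_n$, using $\eta_n, \delta_n \to 0$ and that $\le$ is a closed relation on $B$, gives $f \le \widehat{\alpha}(a) \le g$, which is (N). I expect the main difficulty to be the bookkeeping in the inductive step, which must keep $c_{n+1}$ within a summable distance of $c_n$ (to force uniform convergence) while simultaneously shrinking both the lower error $\eta_n$ and the upper overshoot $\delta_n$ to zero; once the window constants are chosen as above, the remaining verifications are routine.
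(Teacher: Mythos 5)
Your proof is correct and takes essentially the same route as the paper: both iterate (D) with geometrically shrinking tolerances, forming joins/meets of $f$, $g$ with the previous approximant shifted by constants (legitimate closed/open elements since, by Lemma~\ref{lem: alpha versus alpha hat}(2), elements of $\widehat{\alpha}[\widehat{A}]$ are clopen), to produce a uniformly Cauchy sequence whose limit witnesses (N). The only cosmetic differences are that the paper keeps the upper bound exactly $\le g$ throughout and runs the Cauchy argument on the sequence $\{a_n\}$ inside $\widehat{A}$, whereas you allow a vanishing overshoot $g+\delta_n$ and work with $c_n = \widehat{\alpha}(a_n)$ in $B$, recovering $a\in\widehat{A}$ at the end from the uniformly closed isometric image.
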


\begin{proof}
That (N) implies (D) is clear. To see that (D) implies (N), we translate the argument of \cite[Lem.~1.2]{BS75} to our context. Let
$f$ be closed, $g$ open, and $f \le g$. We construct a sequence $a_n \in \widehat{A}$ such that for each $n$,
\begin{align}
f - 1/2^n &\le \widehat{\alpha}(a_n) \le g \\
a_n - 1/2^n &\le a_{n+1} \le a_n + 1/2^n
\end{align}
By (D), there is $a_1 \in \widehat{A}$ with $f - 1/2 \le \widehat{\alpha}(a_1) \le g$. Suppose that we have
$a_1, \dots, a_m \in \widehat{A}$ satisfying (1) for all $n \le m$ and (2) for all $n < m$. Therefore,
by (1) for $m$ we get
\[
f \le \widehat{\alpha}(a_m)  + 1/2^m
\]
Thus,
\[
f \vee (\widehat{\alpha}(a_{m}) - 1/2^{m+1}) \le \widehat{\alpha}(a_m)  + 1/2^m.
\]
Since $f\le g$, it is also clear that
\[
f \vee (\widehat{\alpha}(a_{m}) - 1/2^{m+1}) \le g.
\]
So
\[
f \vee (\widehat{\alpha}(a_{m}) - 1/2^{m+1}) \le g \wedge (\widehat{\alpha}(a_m)  + 1/2^m).
\]
But
\begin{equation}
f \vee (\widehat{\alpha}(a_{m}) -1/2^{m+1}) = (f - 1/2^{m+1}) \vee (\widehat{\alpha}(a_{m}) - 1/2^m) + 1/2^{m+1}.
\end{equation}
Consequently,
\begin{equation}
(f - 1/2^{m+1}) \vee (\widehat{\alpha}(a_{m}) - 1/2^m) + 1/2^{m-1} \le g \wedge (\widehat{\alpha}(a_m) + 1/2^m).
\end{equation}
By Remark~\ref{rem: closed}, $(f - 1/2^{m+1}) \vee (\widehat{\alpha}(a_{m}) - 1/2^m)$ is closed and $g \wedge (\widehat{\alpha}(a_m) + 1/2^m)$ is open. By (D), there is $a_{m+1} \in \widehat{A}$ satisfying
\[
(f - 1/2^{m+1}) \vee (\widehat{\alpha}(a_{m}) - 1/2^m) \le \widehat{\alpha}(a_{m+1}) \le g \wedge (\widehat{\alpha}(a_m) + 1/2^m).
\]
Therefore, (1) and (2) hold for $n = m+1$.
By induction we have produced the desired sequence. Condition (2) implies that $\{a_n\}$ is a Cauchy sequence, so has a uniform limit
$a \in \widehat{A}$. It follows from (1) that $f \le \widehat{\alpha}(a) \le g$. Thus, (N) holds.
\end{proof}

\begin{remark} \label{KT remark} The literature on insertion theorems is extensive and includes early theorems by Baire for the real line, Hahn for metric spaces, and Dieudonn\'{e} for paracompact spaces. These early theorems were generalized to the setting of normal spaces by Kat\v{e}tov \cite{Kat51, Kat53} and Tong \cite{Ton52}, resulting in what is now known as the Kat\v{e}tov-Tong Theorem. Our  approach mostly relies on the work of Tong \cite{Ton52} and later work of Blatter and Seever \cite{BS75}. A few other  references also have aspects that lend themselves to our algebraic approach. For example,  Kubiak \cite{Kub93} proves several results in a similar spirit of Tong's key lemma  (see Theorem~\ref{Tong}), and along the lines of our Condition (D), while Lane \cite{Lan76} uses algebraic arguments to move from strict insertion to insertion. There has also been a good deal of recent work on pointfree versions of insertions theorems; see for example \cite{Pic06}. While our approach is different than the pointfree one, it shares a similar goal of reformulating topological properties in algebraic settings. 
\end{remark}

\section{Compact and Lindel\"of extensions}

In this section we discuss compact basic extensions, which were first studied in \cite{BMO18c} under the name of canonical extensions. 
We show that compact extensions dually correspond to compact Hausdorff spaces, and utilize the compactness axiom to give an alternate
proof of a version of the Stone-Weierstrass theorem. We conclude the section by
introducing Lindel\"of basic extensions, and proving that they dually correspond to Lindel\"of spaces.
We start by recalling the compactness axiom from \cite[Def.~1.6(2)]{BMO18c}.

\begin{definition}
Let $\alpha : A \to B$ be a basic extension, $S,T \subseteq A$, and $\varepsilon > 0$.
\begin{enumerate}
\item[(C)] If $\bigwedge \alpha[S] + \varepsilon \le \bigvee \alpha[T]$, then there are finite subsets $S' \subseteq S$ and
$T'\subseteq T$ with $\bigwedge S' \le \bigvee T'$.
\end{enumerate}
\end{definition}

 \begin{remark} \label{rem: (C)}
Let $\alpha : A \to B$ be a basic extension.
\begin{enumerate}
\item If $\alpha$ satisfies (C), then since $\alpha$ preserves finite joins and meets and $S',T'$ are finite subsets of $A$,
the inequality $\bigwedge S' \le \bigvee T'$ in $A$ is equivalent to the inequality $\bigwedge \alpha[S'] \le \bigvee \alpha[T']$ in $B$.
\item The presence of $\varepsilon$ in (C) is necessary. For,
if $S = \{r \mid 0 < r \in \mathbb{R} \}$ and $T = \{0\}$, then $\bigwedge \alpha[S] \le \bigvee \alpha[T]$, but there is no finite subset
$S'$ of $S$ with $\bigwedge \alpha[S'] \le \bigvee \alpha[T]$. Therefore, no basic extension satisfies the condition:
If $\bigwedge \alpha[S] \le \bigvee \alpha[T]$, then there are finite subsets $S' \subseteq S$ and $T'\subseteq T$ with $\bigwedge S' \le \bigvee T'$.
\item By \cite[Lem.~2.4]{BMO18c}, (C) is equivalent to the following condition: If $T\subseteq A$, $\varepsilon>0$, and
$\varepsilon \le \bigvee \alpha[T]$,
then there is a finite subset $T' \subseteq T$ with $0 \le \bigvee \alpha[T']$.
\end{enumerate}
\end{remark}

\begin{definition}
\begin{enumerate}
\item[]
\item We call a basic extension $\alpha : A \to B$ \emph{compact} if $A \in \ubal$ and $\alpha$ satisfies (C).
\item  Let $\cbasic$ be the full subcategory of $\ubasic$ consisting of compact extensions.
\end{enumerate}
\end{definition}

It is clear that Condition (C) implies (D). Therefore, by Theorem~\ref{thm: N = D}, (C) implies (N). Thus, by Theorem~\ref{U2 implies normal},
$\cbasic$ is a full subcategory of $\nbasic$. The next theorem follows from \cite[Thm.~2.6(2)]{BMO18c}.

\begin{theorem} \label{thm: compact}
Let $e : X \to Y$ be a compactification. Then the following are equivalent.
\begin{enumerate}
\item $e^\flat : C(Y) \to B(X)$ is a compact extension.
\item $X$ is compact.
\item $e : X \to Y$ is a homeomorphism.
\end{enumerate}
\end{theorem}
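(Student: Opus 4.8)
The plan is to prove the cycle by first disposing of the purely topological equivalence (2) $\Leftrightarrow$ (3) and then linking compactness of $X$ to the compactness axiom (C). For (2) $\Rightarrow$ (3): since $e$ is an embedding with dense image and $X$ is compact, $e[X]$ is a compact, hence closed, subset of the Hausdorff space $Y$; being dense it must equal $Y$, so $e$ is a homeomorphism. For (3) $\Rightarrow$ (2), $X \cong Y$ is compact. Throughout I would use the reformulation of (C) in Remark~\ref{rem: (C)}(3): $e^\flat$ satisfies (C) iff whenever $T \subseteq C(Y)$, $\varepsilon > 0$, and $\varepsilon \le \bigvee e^\flat[T]$ in $B(X)$, there is a finite $T' \subseteq T$ with $0 \le \bigvee e^\flat[T']$. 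I would also use that $C(Y) \in \ubal$ automatically, as $Y$ is compact Hausdorff, so that for $e^\flat$ being \emph{compact} reduces to verifying (C).

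For (2) $\Rightarrow$ (1), suppose $X$ is compact and take $T, \varepsilon$ as in the reformulated axiom. Unwinding the join in $B(X)$ pointwise, the inequality $\varepsilon \le \bigvee e^\flat[T]$ means $\sup_{g \in T} g(e(x)) \ge \varepsilon$ for every $x \in X$; hence the open sets $V_g = \{ y \in Y : g(y) > \varepsilon/2 \}$, for $g \in T$, cover $e[X]$. Since $X$, and therefore $e[X]$, is compact, finitely many $V_{g_1}, \dots, V_{g_n}$ suffice, and then $\bigvee_i g_i(e(x)) > \varepsilon/2 > 0$ for all $x$; thus $T' = \{g_1, \dots, g_n\}$ witnesses (C).

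The main work is (1) $\Rightarrow$ (2), which I would prove contrapositively: if $X$ is not compact, I exhibit a family violating (C). Since $e$ embeds $X$ densely in $Y$ and $X$ is noncompact, $e[X]$ is not closed in $Y$ (a closed subset of the compact $Y$ would be compact), so I may fix a point $y_0 \in Y \setminus e[X]$. Using complete regularity of $Y$, for each $x \in X$ I choose $g_x \in C(Y)$ with $-1 \le g_x \le 1$, $g_x(e(x)) = 1$, and $g_x(y_0) = -1$, and set $T = \{ g_x : x \in X\}$, a bounded family so that $\bigvee e^\flat[T]$ exists in $B(X)$ and satisfies $1 \le \bigvee e^\flat[T]$. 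However, any finite $T' = \{g_{x_1}, \dots, g_{x_n}\} \subseteq T$ fails: the set $U = \bigcap_{i} g_{x_i}^{-1}(-\infty, 0)$ is an open neighborhood of $y_0$, and since $y_0 \in \cl_Y(e[X])$ there is $x$ with $e(x) \in U$, whence $g_{x_i}(e(x)) < 0$ for all $i$ and $\bigvee_i g_{x_i}(e(x)) < 0$. Thus no finite $T'$ gives $0 \le \bigvee e^\flat[T']$, so (C) fails.

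The delicate point throughout is keeping track of where the lattice operations are computed: the joins and meets in (C) are taken in the basic algebra $B(X)$, where they are pointwise suprema and infima over the \emph{set} $X$, not continuous envelopes on $Y$; once this is internalized, both the finite-subcover argument and the separation argument are routine. I expect the construction of the violating family $T$ in (1) $\Rightarrow$ (2) to be the main obstacle, specifically arranging that the single bad point $y_0$ forces every finite subfamily to dip below $0$ somewhere on $e[X]$, which is exactly what density of $e[X]$ together with continuity at $y_0$ delivers.
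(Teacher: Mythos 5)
Your proof is correct, but it takes a genuinely different route from the paper: the paper offers no argument at all for this theorem, deriving it instead as an immediate consequence of \cite[Thm.~2.6(2)]{BMO18c}, where the corresponding fact was established in the earlier setting of canonical extensions. Your argument is a self-contained substitute for that citation: (2) $\Leftrightarrow$ (3) by the compact-dense-image argument; (2) $\Rightarrow$ (1) by extracting a finite subcover of $e[X]$ from the open sets $\{y \in Y : g(y) > \varepsilon/2\}$, $g \in T$; and (1) $\Rightarrow$ (2) contrapositively, by fixing $y_0 \in Y \setminus e[X]$ and Urysohn functions $g_x$ with $g_x(e(x)) = 1$ and $g_x(y_0) = -1$, where density of $e[X]$ forces every finite join from $T$ to dip below $0$ at some point of $e[X]$, which is exactly where the join in $B(X)$ is computed. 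Both halves correctly exploit that arbitrary bounded joins in $B(X)$ are pointwise suprema over $X$. One small caveat: you lean on the reformulation of (C) in Remark~\ref{rem: (C)}(3), which the paper itself justifies only by citing \cite[Lem.~2.4]{BMO18c}; if you want full independence from that reference, either prove that easy equivalence (the argument is the same as (3) $\Rightarrow$ (4) in Theorem~\ref{thm: removing epsilon}), or verify (C) in its original form by running your covering argument on the sets $\{y \in Y : (t-s)(y) > \varepsilon/2\}$ for $s \in S$, $t \in T$, and then using density of $e[X]$ and continuity to upgrade the finite inequality $\bigwedge S' \le \bigvee T'$ from $e[X]$ to all of $Y$. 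What the paper's citation buys is brevity and consistency with the more general vector-lattice framework of \cite{BMO18c}; what your proof buys is a transparent, elementary argument whose only inputs are Urysohn's lemma, density of $e[X]$, and the pointwise description of joins in $B(X)$.
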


Theorems~\ref{CR duality} and~\ref{thm: compact} yield the following duality theorem for compact Hausdorff spaces.

\begin{theorem} \label{thm: cbasic = KHaus}
$\cbasic$ is dually equivalent to $\KHaus$.
\end{theorem}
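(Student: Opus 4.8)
The plan is to exhibit $\cbasic$ as a subcategory of $\mbasic$ and then transport the dual equivalence of Theorem~\ref{CR duality} across it. First I would establish that $\cbasic$ is a full subcategory of $\mbasic$: since every compact extension satisfies (C), and (C) implies (D), which by Theorem~\ref{thm: N = D} implies (N), Theorem~\ref{U2 implies normal} tells us that every compact extension is normal and hence maximal. Thus $\cbasic$ sits inside $\mbasic$ as a full subcategory, and the dual equivalence $\norm \simeq \nbasic$ (or more directly $\creg \simeq \mbasic$ from Theorem~\ref{CR duality}) restricts to $\cbasic$.

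The heart of the argument is then to identify, under the correspondence of Theorem~\ref{CR duality}, exactly which completely regular spaces $X$ give rise to compact extensions. Here Theorem~\ref{thm: compact} is decisive: for a compactification $e : X \to Y$, the extension $e^\flat : C(Y) \to B(X)$ is compact if and only if $X$ is compact (equivalently $e$ is a homeomorphism). Since maximal basic extensions correspond to Stone-\v{C}ech compactifications $s : X \to \beta X$, and the associated extension is $s^\flat : C(\beta X) \to B(X)$ which is isomorphic to $\iota : C^*(X) \to B(X)$, I would conclude that a maximal basic extension is compact precisely when its dual space $X_B$ is compact. Under the equivalence of $\creg$ with the subcategory of Stone-\v{C}ech compactifications, the compact spaces are exactly the objects of $\KHaus$, since a compact completely regular (hence Tychonoff) space coincides with its own Stone-\v{C}ech compactification.

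Assembling these pieces, the dual equivalence $\creg \simeq \mbasic$ of Theorem~\ref{CR duality} restricts to a dual equivalence between the full subcategory of $\creg$ on compact Hausdorff spaces, namely $\KHaus$, and the full subcategory of $\mbasic$ on compact extensions, namely $\cbasic$. Because both $\cbasic$ and $\KHaus$ are \emph{full} subcategories, the restricted functors remain full, faithful, and essentially surjective onto their images, so the restriction is again a dual equivalence; this is the routine part.

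The main obstacle I anticipate is not the categorical bookkeeping but pinning down the object-level correspondence cleanly: I must verify that under Theorem~\ref{CR duality} the space matched to a compact extension $\alpha : A \to B$ is genuinely compact, and conversely that every compact Hausdorff space yields a compact extension. The forward direction rests on Theorem~\ref{thm: compact} applied to the canonical compactification $\alpha_\flat : X_B \to Y_A$, which for a maximal extension is the Stone-\v{C}ech compactification; the converse requires observing that for compact $X$ the map $s : X \to \beta X$ is a homeomorphism, so $e^\flat$ satisfies (C) by Theorem~\ref{thm: compact}. Once this object correspondence is secured, the equivalence follows formally.
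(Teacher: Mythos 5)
Your proposal is correct and follows essentially the same route as the paper: the paper derives this theorem directly by combining Theorem~\ref{CR duality} with Theorem~\ref{thm: compact}, having already noted (just as you do) that (C) implies (D), hence (N) by Theorem~\ref{thm: N = D}, hence normality and maximality by Theorem~\ref{U2 implies normal}, so that $\cbasic$ is a full subcategory of $\nbasic\subseteq\mbasic$. Your fleshing-out of the object-level correspondence (maximal extensions are compact exactly when $X_B$ is compact, and compact Tychonoff spaces coincide with their Stone-\v{C}ech compactifications) is precisely the content the paper leaves implicit.
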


By Gelfand-Naimark-Stone duality, $\KHaus$ is dually equivalent to $\ubal$. Therefore, by Theorem~\ref{thm: cbasic = KHaus}, $\cbasic$ is
equivalent to $\ubal$. We give a direct proof of this result, thus obtaining
a different view of Gelfand-Naimark-Stone duality.

\begin{theorem}
$\cbasic$ is equivalent to $\ubal$.
\end{theorem}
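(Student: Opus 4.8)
The plan is to exhibit an explicit equivalence $\cbasic \simeq \ubal$ by constructing functors in both directions and verifying they are mutually inverse up to natural isomorphism. The natural candidate for the functor $\ubal \to \cbasic$ sends $A \in \ubal$ to the basic extension $\zeta_A : A \to B(X_A)$, where I identify the Yosida representation $A \cong C(Y_A)$ and restrict to the dense set $X_A$ of isolated points; equivalently, this is the extension $s^\flat : C(\beta X_A) \to B(X_A)$ pattern but now built from $A$ itself. In the other direction, the functor $\cbasic \to \ubal$ should send a compact extension $\alpha : A \to B$ simply to its domain $A$ (which lies in $\ubal$ by definition), and send a morphism $(\rho,\sigma)$ to $\rho$.

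First I would verify that $\zeta_A : A \to B(X_A)$ is genuinely a \emph{compact} basic extension for every $A \in \ubal$. That it is a basic extension follows from the setup in Section~2: $B(X_A)$ is a basic algebra, $\zeta_A$ is a monomorphism, and join-meet density holds. The key point is that $\alpha = \zeta_A$ satisfies (C). Since $A$ is uniformly complete, $Y_A$ is compact Hausdorff and $A \cong C(Y_A)$, so I would check (C) using the formulation in Remark~\ref{rem: (C)}(3): given $T \subseteq A$ and $\varepsilon > 0$ with $\varepsilon \le \bigvee \alpha[T]$ in $B(X_A)$, the joins are computed pointwise on the dense set $X_A$, and a compactness argument on $Y_A$ (the sets where the continuous functions in $T$ exceed $0$ form an open cover of the compact space, once the $\varepsilon$-gap is used) extracts a finite $T'$ with $0 \le \bigvee \alpha[T']$. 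This is essentially Theorem~\ref{thm: compact} read through the duality, so I would cite it or reprove the compactness step directly.

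Next I would establish the two natural isomorphisms. On the $\ubal$ side, the composite $A \mapsto (\zeta_A : A \to B(X_A)) \mapsto A$ is already the identity on objects, so that direction is immediate. The substantive direction is showing that for a compact extension $\alpha : A \to B$, the extension reconstructed from $A$, namely $\zeta_A : A \to B(X_A)$, is isomorphic in $\cbasic$ to $\alpha : A \to B$ via an isomorphism that is the identity on $A$. Here I would use that a compact extension is in particular maximal (it satisfies (C), hence (N) by Theorem~\ref{thm: N = D}, hence is normal and maximal by Theorem~\ref{U2 implies normal}), so by the characterization following Definition~\ref{def: maximal} the compactification $\alpha_\flat : X_B \to Y_A$ is the Stone-\v{C}ech compactification of $X_B$; combined with Theorem~\ref{thm: compact}, compactness of $\alpha$ forces $\alpha_\flat$ to be a homeomorphism, identifying $Y_A$ with $\beta X_B = X_B$ and hence $B \cong B(X_A)$ compatibly with $\alpha$ and $\zeta_A$. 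I would package this as a natural isomorphism, checking naturality against morphisms $(\rho,\sigma)$.

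The main obstacle I anticipate is the bookkeeping needed to promote these object-level identifications into honest \emph{functors} with a verified natural isomorphism, rather than a mere bijection of objects. Concretely, the delicate step is showing that a morphism $(\rho,\sigma)$ in $\cbasic$ is completely determined by its $\ubal$-component $\rho$ (so that the functor $\cbasic \to \ubal$ is faithful and full), and conversely that every $\ell$-algebra homomorphism $\rho : A \to A'$ in $\ubal$ lifts uniquely to a morphism of compact extensions. Faithfulness amounts to showing $\sigma$ is determined by $\rho$: since $\sigma$ is normal and $\alpha[A]$ is join-meet dense in $B$, the value of $\sigma$ on any element of $B$ is forced by its values on $\alpha[A]$, which are in turn determined by $\rho$ via $\sigma \circ \alpha = \alpha' \circ \rho$. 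Fullness requires producing the normal homomorphism $\sigma : B(X_A) \to B(X_{A'})$ from $\rho$, which I would obtain as $(\rho_*)^+$ restricted appropriately, using that $\rho_* : Y_{A'} \to Y_A$ carries isolated points to isolated points in the compact setting. Once these are in hand, the equivalence follows formally.
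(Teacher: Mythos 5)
Your overall strategy is the same as the paper's: the forgetful functor $\cbasic \to \ubal$ sending $\alpha : A \to B$ to $A$ and $(\rho,\sigma)$ to $\rho$, shown faithful via normality of $\sigma$ plus join-meet density of $\alpha[A]$, full via a lift of the form $(\rho,(\rho_*)^+)$, and essentially surjective via a standard compact extension built from $A$. However, there is a genuine error in how you build that standard object: you send $A \in \ubal$ to ``$\zeta_A : A \to B(X_A)$,'' where $X_A$ is the set of isolated points of $Y_A$, asserting that $X_A$ is dense in $Y_A$. Density of the isolated points holds only for \emph{basic} algebras --- it is precisely where atomicity of $\Id(A)$ enters --- and fails badly for a general uniformly complete $A$: for $A = C([0,1])$ one has $Y_A \cong [0,1]$, so $X_A = \varnothing$ and $B(X_A)$ is the zero algebra, and the restriction map $A \to B(X_A)$ is not even a monomorphism, let alone a basic extension. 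So the functor $\ubal \to \cbasic$ you propose is not defined. The same confusion resurfaces in your fullness step, where you invoke the claim that $\rho_* : Y_{A'} \to Y_A$ carries isolated points to isolated points; this is false in general (take $\rho : C([0,1]\cup\{2\}) \to \mathbb{R}$ to be evaluation at $1/2$: the unique point of $Y_{A'}$ is isolated, but its image corresponds to $1/2$, which is not isolated), and it is needed only because of the misidentified target.

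The fix is simple and lands you on the paper's proof: the standard compact extension attached to $A \in \ubal$ is $\zeta_A : A \to B(Y_A)$, the Yosida representation followed by the inclusion $C(Y_A) \subseteq B(Y_A)$, i.e.\ the extension $\iota : C(Y_A) \to B(Y_A)$ associated with the \emph{identity} compactification of the compact space $Y_A$; it satisfies (C) by Theorem~\ref{thm: compact}. With this correction, fullness is witnessed by $(\rho, (\rho_*)^+)$ for an arbitrary continuous $\rho_* : Y_{A'} \to Y_A$, with no preservation of isolated points required. Your third paragraph then goes through as intended: for a compact extension $\alpha : A \to B$, Tarski duality (Theorem~\ref{Tarski}) gives $B \cong B(X_B)$, compactness forces $\alpha_\flat : X_B \to Y_A$ to be a homeomorphism (Theorem~\ref{thm: compact}), and transporting along this bijection yields an isomorphism, identical on $A$, between $\alpha : A \to B$ and $\zeta_A : A \to B(Y_A)$ --- note the target is $B(Y_A)$, not $B(X_A)$. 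That object-level identification is actually a worthwhile addition: the paper's own fullness argument is only carried out between the standard objects $\zeta_A$ and $\zeta_{A'}$, and it is exactly this isomorphism that extends fullness to arbitrary compact extensions.
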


\begin{proof}
We define a functor ${\sf F} : \cbasic \to \ubal$ by sending a compact extension $\alpha : A \to B$ to $A$, and a morphism $(\rho, \sigma)$
in $\cbasic$ to $\rho$. It is clear that $\sf F$ is a functor. It follows from \cite[Thm.~1.8(2)]{BMO18c} that
$\zeta_A : A \to B(Y_A)$ is a compact extension. Since ${\sf F}(\zeta_A) = A$, each $A \in \bal$ is in the image of $\sf F$.
To see that $\sf F$ is faithful, it is sufficient to show that if $(\rho, \sigma_1)$ and $(\rho, \sigma_2)$ are morphisms from
$\alpha : A \to B$ to $\alpha' : A' \to B'$, then $\sigma_1 = \sigma_2$.
\[
\begin{tikzcd}[column sep=5pc]
A \arrow[r, "\alpha"] \arrow[d, "\rho"'] & B \arrow[d, shift right = .30pc, "\sigma_1"'] \arrow[d, shift left = .30pc, "\sigma_2"] \\
A' \arrow[r, "\alpha'"'] & B'
\end{tikzcd}
\]
Both $\sigma_1$ and $\sigma_2$ are normal homomorphisms. Therefore, as $\alpha[A]$ is join-meet dense in $B$, the equation
$\sigma_1 \circ \alpha = \sigma_2 \circ \alpha$ yields $\sigma_1 = \sigma_2$. Finally, to see that $\sf F$ is full, if $\rho : A \to A'$
is a morphism in $\ubal$, then $(\rho, (\rho_*)^+)$ is a morphism from $\zeta_ A : A \to B(Y_A)$ to $\zeta_{A'} : A' \to B(Y_{A'})$
with ${\sf F}((\rho, (\rho_*)^+) = \rho$.
\[
\begin{tikzcd} [column sep=5pc]
A \arrow[r, "\zeta_A"] \arrow[d, "\rho"'] & B(Y_A) \arrow[d, "(\rho_*)^+"] \\
A' \arrow[r, "\zeta_{A'}"'] & B(Y_{A'})
\end{tikzcd}
\]
This shows that $\sf F$ is full, which completes the proof that $\sf F$ is an equivalence of categories.
\end{proof}

By the celebrated Stone-Weierstrass theorem, if $X$ is compact Hausdorff and $A$ is an $\mathbb{R}$-subalgebra of $C(X)$ which separates points
of $X$, then $A$ is uniformly dense in $C(X)$. A weaker version, restricting to $\ell$-subalgebras of $C(X)$, plays a central role in proving
Gelfand-Naimark-Stone duality. We show how to derive 
this version of the Stone-Weierstrass theorem by utilizing compact extensions. For this we need the following lemma.

\begin{lemma} \label{clopen}
Suppose that $\alpha : A \to B$ is a basic extension satisfying \emph{(C)}. Then the clopen elements of $B$ are in the uniform closure of
$\alpha[A]$ in $B$.
\end{lemma}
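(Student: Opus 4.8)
The plan is to take an arbitrary clopen element $b \in B$ and show it lies in the uniform closure of $\alpha[A]$, i.e., that for every $\varepsilon > 0$ there is $a \in A$ with $\|b - \alpha(a)\| < \varepsilon$ (where the norm is computed in $B$, or equivalently via the representation $B = B(X_B)$). The idea is to exploit that $b$ is simultaneously a meet and a join of elements of $\alpha[A]$, combine this with the compactness axiom (C) to extract \emph{finite} approximating subfamilies, and then assemble a single element of $A$ that approximates $b$ uniformly.

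First I would write $b = \bigwedge \alpha[S]$ (since $b$ is closed) and $b = \bigvee \alpha[T]$ (since $b$ is open) for suitable $S, T \subseteq A$. Fix $\varepsilon > 0$. The key move is to apply (C) to a suitably shifted pair. From $b = \bigwedge \alpha[S] = \bigvee \alpha[T]$ we get $\bigwedge \alpha[S] \le \bigvee \alpha[T]$, but as Remark~\ref{rem: (C)}(2) warns, we cannot extract finite subfamilies without an $\varepsilon$-gap. To manufacture the gap, I would consider the shifted family $S - \varepsilon := \{ s - \varepsilon \mid s \in S\}$ (or equivalently work with $\bigwedge\alpha[S] - \varepsilon$), so that $\bigwedge \alpha[S] - \varepsilon < b \le \bigvee \alpha[T]$, giving the strict inequality $\bigwedge \alpha[S-\varepsilon] + \varepsilon \le \bigvee \alpha[T]$ needed to invoke (C). This yields finite $S' \subseteq S$ and $T' \subseteq T$ with $\bigwedge S' - \varepsilon \le \bigvee T'$ in $A$; passing through $\alpha$ (using that $\alpha$ preserves finite meets and joins, Remark~\ref{rem: (C)}(1)) gives $\bigwedge \alpha[S'] - \varepsilon \le \bigvee \alpha[T']$ in $B$.

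Now set $a' = \bigwedge S' \in A$ and $a'' = \bigvee T' \in A$ (finite meets and joins in $A$). Since $S' \subseteq S$ we have $b = \bigwedge\alpha[S] \le \bigwedge \alpha[S'] = \alpha(a')$, and since $T' \subseteq T$ we have $\alpha(a'') = \bigvee \alpha[T'] \le \bigvee\alpha[T] = b$. Thus $\alpha(a'') \le b \le \alpha(a')$, while the (C)-inequality gives $\alpha(a') - \varepsilon \le \alpha(a'')$, hence $0 \le \alpha(a') - b \le \alpha(a') - \alpha(a'') \le \varepsilon$. Taking $a = a' \in A$, we conclude $0 \le \alpha(a) - b \le \varepsilon$, so $\|b - \alpha(a)\| \le \varepsilon$. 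Since $\varepsilon$ was arbitrary, $b$ lies in the uniform closure of $\alpha[A]$.

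The step I expect to require the most care is the manufacture of the $\varepsilon$-gap so that (C) applies: one must correctly verify that shifting $S$ down by $\varepsilon$ produces a genuine inequality $\bigwedge\alpha[S-\varepsilon]+\varepsilon \le \bigvee\alpha[T]$ in the hypothesis form of (C), and that the resulting finite-subfamily inequality in $A$ transports back to $B$ sandwiching $b$ on both sides. Everything else is a routine consequence of the meet/join-density of $\alpha[A]$ and the order-theoretic bookkeeping; in particular, one should double-check that the two-sided sandwich $\alpha(a'')\le b\le\alpha(a')$ together with the gap bound indeed controls $\|b-\alpha(a)\|$ in the supremum norm on $B$.
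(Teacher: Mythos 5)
Your proof is correct and takes essentially the same route as the paper's: both manufacture the $\varepsilon$-gap needed for (C) by a constant shift (the paper shifts the clopen element up, using that $b+\varepsilon/2$ is closed and $b+\varepsilon$ is open; you shift the defining meet-family down by $\varepsilon$), then apply (C) and take the resulting finite meet from $S$ as the approximant. Your write-up merely makes explicit the unwinding of (C) into families and finite subfamilies that the paper's proof leaves implicit.
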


\begin{proof}
Let $f$ be clopen in $B$ and let $\varepsilon  >0$. Then $f+\varepsilon/2$ and $f+\varepsilon$ are also clopen. So $f+\varepsilon/2$ is closed,
$f+\varepsilon$ is open, and $(f + \varepsilon/2) + \varepsilon/2 \le f + \varepsilon$.
By (C), there is $a \in A$ such that $f \leq \alpha(a) \leq f +\varepsilon $. Therefore, $\| f - \alpha(a)\| \le \varepsilon$. This shows that
$f$ is in the uniform closure of $\alpha[A]$.
\end{proof}

\begin{theorem}[Stone-Weierstrass for $\ell$-subalgebras]
Let $X \in \KHaus$, $A \in \bal$, and $\alpha : A \to C(X)$ be a monomorphism in $\bal$ such that $\alpha[A]$ separates points of $X$.
Then $\alpha[A]$ is dense in $C(X)$.
\end{theorem}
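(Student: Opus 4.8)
The plan is to factor $\alpha$ through a basic extension into $B(X)$ that satisfies the compactness axiom (C), and then apply Lemma~\ref{clopen}. Since $X\in\KHaus$, the identity $\mathrm{id}_X:X\to X$ is a compactification whose associated basic extension is the inclusion $\iota:C(X)\to B(X)$ (recall $C^*(X)=C(X)$ here). Let $\beta=\iota\circ\alpha:A\to B(X)$; this is a monomorphism in $\bal$ with $B(X)\in\balg$. My first task is to show that $\beta$ is a basic extension satisfying (C), and my second is to identify its clopen elements with $C(X)$.

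The crux is the claim that every $c\in C(X)$ is both a meet and a join of elements of $\beta[A]$, i.e.\ $c$ is clopen relative to $\beta$. I would prove $c=\bigwedge\{h\in\beta[A]\mid h\ge c\}$ by a one-sided lattice argument. Since $\beta[A]=\alpha[A]$ is an $\mathbb{R}$-subalgebra containing the constants and separating the points of $X$, for any two points $x,y$ I can interpolate to obtain $h_{xy}\in\alpha[A]$ with $h_{xy}(x)=c(x)$ and $h_{xy}(y)=c(y)$. Fixing $x$ and $\varepsilon>0$, each $h_{xy}+\varepsilon$ exceeds $c$ on a neighborhood of $y$; compactness of $X$ then yields a finite join $h\in\alpha[A]$ with $h\ge c$ everywhere and $h(x)=c(x)+\varepsilon$. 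As meets in $B(X)$ are computed pointwise, letting $\varepsilon\to 0$ gives $\bigwedge\{h\in\alpha[A]\mid h\ge c\}=c$, so $c$ is closed relative to $\beta$; applying this to $-c$ (and using that $\alpha[A]$ is closed under negation) shows $c$ is also open. Conversely, any element clopen relative to $\beta$ is a meet (resp.\ join) of continuous functions, hence upper (resp.\ lower) semicontinuous by Lemma~\ref{lem: semicontinuous}, hence continuous. Thus the clopen elements of $B(X)$ relative to $\beta$ are exactly $C(X)$.

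From this claim, $\beta$ is a basic extension: every $b\in B(X)$ is a join of meets of elements of $C(X)$ because $\iota$ is a basic extension, and each such meet is in turn a meet of elements of $\alpha[A]$ by the claim, so $\alpha[A]$ is join-meet dense in $B(X)$. Next I would verify (C) directly: if $\bigwedge\beta[S]+\varepsilon\le\bigvee\beta[T]$, then at each $x$ I can choose $s_x\in S$ and $t_x\in T$ with $\alpha(s_x)(x)<\alpha(t_x)(x)$ by a fixed positive margin; continuity makes this strict inequality persist on a neighborhood of $x$, and a finite subcover produces finite sets $S'\subseteq S$, $T'\subseteq T$ with $\bigwedge\beta[S']\le\bigvee\beta[T']$, i.e.\ $\bigwedge S'\le\bigvee T'$ in $A$. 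With $\beta$ now a basic extension satisfying (C), Lemma~\ref{clopen} shows every clopen element of $B(X)$ lies in the uniform closure of $\alpha[A]$. Since those clopen elements are exactly $C(X)$, and $C(X)$ is uniformly closed in $B(X)$, we conclude that $\alpha[A]$ is uniformly dense in $C(X)$.

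I expect the main obstacle to be the one-sided approximation in the crux claim: ordinary lattice-theoretic Stone-Weierstrass gives two-sided uniform approximation, whereas here I need a function in $\alpha[A]$ lying \emph{above} $c$ yet arbitrarily close to $c(x)$ at a prescribed point. This is precisely where point-separation (through two-point interpolation) and compactness of $X$ enter, and it is the step that must be executed with care; the verifications that $\beta$ is a basic extension and satisfies (C) are then routine compactness arguments.
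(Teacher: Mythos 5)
Your proposal is correct and follows the same skeleton as the paper's own proof: factor through the compact extension $\iota : C(X)\to B(X)$, show $\alpha : A \to B(X)$ is a basic extension satisfying (C) in which every element of $C(X)$ is clopen, and finish with Lemma~\ref{clopen}. The only real difference is self-containedness: where the paper imports its three technical ingredients from \cite{BMO18c} (join-meet density of $\alpha[A]$ in $B(X)$, condition (C) via Theorem~\ref{thm: compact}, and the fact that each $f\in C(X)$ is clopen relative to $\alpha$), you prove them directly, and your arguments are sound. In particular, your one-sided Kakutani--Krein-style interpolation --- two-point matching via point separation and affine adjustment, then a finite join over a subcover, using that meets in $B(X)$ are pointwise infima --- correctly shows each $c\in C(X)$ is a meet (hence, by negation, also a join) of elements of $\alpha[A]$; join-meet density of $\alpha[A]$ then follows by composing with the density of $C(X)$ in $B(X)$, and your pointwise subcover verification of (C) is valid (note that the paper gets (C) for $\alpha$ more cheaply: any finite witnessing subsets of $\alpha[S]$, $\alpha[T]$ provided by (C) for $\iota$ pull back along $\alpha$, which is why the paper calls this step ``immediate''). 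What your route buys is independence from the external reference; what the paper's buys is brevity. Your closing observation is also apt: the genuinely nontrivial step is exactly the one-sided approximation hidden in the clopen claim, which is where separation of points and compactness must both be used.
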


\begin{proof}
It follows from \cite[Lem.~2.8(3)]{BMO18c} that $\alpha[A]$ is join-meet dense in $B(X)$, so $\alpha : A \to B(X)$ is a basic extension.
The basic extension $\iota : C(X) \to B(X)$ satisfies (C) by Theorem~\ref{thm: compact}. It is then immediate that $\alpha$ satisfies (C).
Let $\overline{A}$ be the uniform closure of $\alpha[A]$ in $B(X)$.
By \cite[Lem.~2.8(1 and 2)]{BMO18c}, every $f\in C(X)$ is clopen relative to $\alpha$. Therefore,
by Lemma~\ref{clopen}, $C(X) \subseteq \overline{A}$. Thus,
$\alpha[A]$ is uniformly dense in $C(X)$.
\end{proof}

Two important weakenings of the compactness condition are the Lindel\"{o}f and local compactness conditions. We conclude this section by introducing basic extensions corresponding to Lindel\"{o}f spaces. In the next section we introduce basic extensions corresponding to locally compact spaces.

We recall that a completely regular space $X$ is \emph{Lindel\"{o}f} provided each open cover of $X$ has a countable subcover. With this in mind, we introduce the following natural weakening of (C).

\begin{definition}
Let $\alpha : A \to B$ be a basic extension, $S,T\subseteq A$, and $\varepsilon > 0$.
\begin{enumerate}
\item[(L)] If $\bigwedge\alpha[S] + \varepsilon \le \bigvee\alpha[T]$, then there are countable subsets $S' \subseteq S$ and
$T'\subseteq T$ with $\bigwedge \alpha[S'] \le \bigvee \alpha[T']$.
\end{enumerate}
\end{definition}

\begin{theorem} \label{thm: removing epsilon}
For a basic extension $\alpha : A \to B$ the following are equivalent.
\begin{enumerate}
\item $\alpha$ satisfies \emph{(L)}.
\item If $T$ is a subset of $A$ with $\varepsilon \le \bigvee\alpha[T]$ for some $\varepsilon > 0$, then there is a countable subset
$T'\subseteq T$ with $0 \le \bigvee \alpha[T']$.
\item If $T$ is a subset of $A$ with $0 \le \bigvee \alpha[T]$, then there is a countable subset $T'\subseteq T$ with $0 \le \bigvee \alpha[T']$.
\item If $S,T$ are subsets of $A$ with $\bigwedge\alpha[S] \le \bigvee\alpha[T]$, then there are countable subsets $S' \subseteq S$ and
$T'\subseteq T$ with $\bigwedge \alpha[S'] \le \bigvee \alpha[T']$.
\end{enumerate}
\end{theorem}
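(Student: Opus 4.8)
The plan is to prove the cyclic chain of implications $(1) \Rightarrow (2) \Rightarrow (3) \Rightarrow (4) \Rightarrow (1)$, of which two links are purely formal and two are substantive. For $(4) \Rightarrow (1)$, observe that the hypothesis $\bigwedge \alpha[S] + \varepsilon \le \bigvee \alpha[T]$ of (L) already forces $\bigwedge \alpha[S] \le \bigvee \alpha[T]$ since $\varepsilon > 0$, so (4) applies verbatim and produces the required countable $S', T'$. For $(1) \Rightarrow (2)$, specialize (L) to the singleton $S = \{0\}$: then $\bigwedge \alpha[S] = 0$, so the hypothesis $\varepsilon \le \bigvee \alpha[T]$ of (2) is exactly $\bigwedge \alpha[S] + \varepsilon \le \bigvee \alpha[T]$, and the countable $T'$ returned by (L) satisfies $0 = \bigwedge \alpha[S'] \le \bigvee \alpha[T']$.

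The first substantive step, $(3) \Rightarrow (4)$, is a two-sided-to-one-sided reduction modeled on the analogous passage for (C). Given $S,T$ with $\bigwedge \alpha[S] \le \bigvee \alpha[T]$, I would form the difference set $U = \{t - s \mid s \in S,\ t \in T\} \subseteq A$. Using the $\ell$-ring identities recorded in Remark~\ref{rem: closed}(2) and~(3) (which convert $-\bigwedge$ into $\bigvee$ and compute joins of sums) one obtains $\bigvee \alpha[U] = \bigvee \alpha[T] - \bigwedge \alpha[S] \ge 0$. Condition (3) then yields a countable $U' \subseteq U$ with $0 \le \bigvee \alpha[U']$. The countably many elements of $U'$ involve only countably many $s \in S$ and $t \in T$; collecting these into countable sets $S', T'$ and using $\alpha(t) - \alpha(s) \le \bigvee\alpha[T'] - \bigwedge\alpha[S']$ for each contributing pair gives $0 \le \bigvee\alpha[U'] \le \bigvee\alpha[T'] - \bigwedge\alpha[S']$, that is $\bigwedge\alpha[S'] \le \bigvee\alpha[T']$. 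The same device, applied to $U$ when $\bigvee\alpha[U]\ge\varepsilon$, also gives a direct $(2)\Rightarrow(1)$ if a less cyclic presentation is preferred.

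The main obstacle is the $\varepsilon$-removal step $(2) \Rightarrow (3)$, which is precisely the feature separating the Lindel\"of condition from (C), where Remark~\ref{rem: (C)}(2) shows that $\varepsilon$ cannot be dropped. Assuming $0 \le \bigvee \alpha[T]$, I would shift $T$ by $1/n$ for each $n$: setting $T_n = \{t + 1/n \mid t \in T\}$ gives $\bigvee \alpha[T_n] = \bigvee\alpha[T] + 1/n \ge 1/n$, so (2) with $\varepsilon = 1/n$ yields a countable $\tilde T_n \subseteq T$ with $\bigvee\alpha[\tilde T_n] \ge -1/n$. Then I take $T' = \bigcup_n \tilde T_n$. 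This is the decisive use of countability: a countable union of countable sets is countable, whereas the parallel finite-union argument fails, which is exactly why (C) genuinely needs $\varepsilon$. Since $T' \supseteq \tilde T_n$ for every $n$, we get $\bigvee\alpha[T'] \ge -1/n$ for all $n$, and the archimedean property of $B$ forces $\bigvee\alpha[T'] \ge 0$, completing the cycle. Throughout, one should note that the displayed joins and meets exist because $B$ is Dedekind complete and the relevant subsets are bounded by the hypotheses; in particular $\bigvee\alpha[T']$ exists since $\alpha[T'] \subseteq \alpha[T]$ is bounded above.
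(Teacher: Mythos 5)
Your proposal is correct and follows essentially the same route as the paper: the same cycle $(1)\Rightarrow(2)\Rightarrow(3)\Rightarrow(4)\Rightarrow(1)$, with $S=\{0\}$ for $(1)\Rightarrow(2)$, the shift-by-$1/n$ and countable-union trick for $(2)\Rightarrow(3)$, and the difference set $\{t-s\}$ together with the $\ell$-ring identities of Remark~\ref{rem: closed} for $(3)\Rightarrow(4)$. The only cosmetic difference is that you finish $(2)\Rightarrow(3)$ by invoking the archimedean property directly from $\bigvee\alpha[T']\ge -1/n$ for all $n$, whereas the paper computes $\bigwedge_n\bigl(\bigvee\alpha[T']+1/n\bigr)=\bigvee\alpha[T']$; these are the same argument in substance.
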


\begin{proof}
(1) $\Rightarrow$ (2). This follows by setting $S = \{0\}$.

(2) $\Rightarrow$ (3). Let $T$ be a subset of $A$ with $0 \le \bigvee\alpha[T]$. Set $T+1/n = \{ c + 1/n \mid c \in T\}$. Then
$1/n  \le \bigvee \alpha[T+1/n]$. By (2), there are countable subsets $T_n \subseteq T$ such that $0 \le \bigvee \alpha[T_n+1/n]$.
Let $T' = \bigcup_n T_n$. Clearly $T'$ is countable, and
\begin{align*}
0 &\le \bigwedge_n (\bigvee \alpha[T_n+1/n]) = \bigwedge_n (\bigvee\alpha[T_n] + 1/n)\\
&\le \bigwedge_n [(\bigvee \alpha[T']) + 1/n] = \bigvee \alpha[T'] + \bigwedge_n 1/n  = \bigvee\alpha[T'].
\end{align*}
Therefore, (3) holds.

(3) $\Rightarrow$ (4). Let $S,T$ be subsets of $A$ with $\bigwedge \alpha[S]  \le \bigvee \alpha[T]$. Then, by Remark~\ref{rem: closed},
\begin{align*}
0 &\le \bigvee \alpha[T] - \bigwedge \alpha[S] = \bigvee \{ \alpha(t) - \alpha(s) \mid t \in T, s \in S\} \\
&= \bigvee \{ \alpha(t-s) \mid t \in T, s \in S\}.
\end{align*}
By (3), there is a countable collection $\{t_n - s_n\}$ with
\[
0 \le \bigvee_n \alpha(t_n - s_n) = \bigvee_n (\alpha(t_n) - \alpha(s_n)) \le \bigvee_n \left(\alpha(t_n) - \bigwedge_n\alpha(s_n)\right) = \bigvee_n\alpha(t_n) - \bigwedge_n \alpha(s_n).
\]
Thus, $\bigwedge \alpha(s_n) \le \bigvee \alpha(t_n)$, so (4) holds.

(4) $\Rightarrow$ (1). This is clear.
\end{proof}

\begin{theorem} \label{thm: L = Lind}
Let $e : X \to Y$ be a compactification and $e^\flat : C(Y) \to B(X)$ the corresponding basic extension. Then $X$ is Lindel\"{o}f iff
$e^\flat$ satisfies \emph{(L)}.
\end{theorem}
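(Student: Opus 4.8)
The plan is to prove both implications through the concrete description of joins in $B(X)$: since $B(X)$ carries the pointwise order and is Dedekind complete, the join of any family that is bounded above is computed as the pointwise supremum. Throughout I write $U_t = \{x \in X : (e^\flat(t))(x) > 0\}$ for $t \in C(Y)$; each $U_t$ is open in $X$ because $e^\flat(t) = t \circ e$ is continuous. I would also record at the outset that, since $e$ is a topological embedding, every open set of $X$ has the form $e^{-1}(W)$ for some open $W \subseteq Y$, and that for $x \in e^{-1}(W)$ Urysohn's lemma in the compact Hausdorff space $Y$ yields $t \in C(Y)$ with $0 \le t \le 1$, $t(e(x)) = 1$, and $t = 0$ on $Y \setminus W$; this $t$ satisfies $U_t \subseteq e^{-1}(W)$.

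For the direction ``$X$ Lindel\"of $\Rightarrow$ (L)'', I would verify condition (2) of Theorem~\ref{thm: removing epsilon}. Suppose $T \subseteq C(Y)$ with $\varepsilon \le \bigvee e^\flat[T]$ for some $\varepsilon > 0$; the join exists, so $\alpha[T]$ is bounded above and the join is the pointwise supremum, which is $\ge \varepsilon$ everywhere. Then for each $x \in X$ some $t \in T$ has $(t\circ e)(x) > 0$, so $x \in U_t$; hence $\{U_t : t \in T\}$ is an open cover of $X$. Lindel\"ofness gives a countable subcover $\{U_{t_n}\}$, and setting $T' = \{t_n\}$, every $x$ lies in some $U_{t_n}$, so the pointwise supremum $\bigvee e^\flat[T']$ is strictly positive, in particular $0 \le \bigvee e^\flat[T']$. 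Thus (2) holds, and by Theorem~\ref{thm: removing epsilon} so does (L).

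For the converse ``(L) $\Rightarrow$ $X$ Lindel\"of'', let $\{V_i\}_{i \in I}$ be an open cover of $X$. For each $x \in X$ choose $i(x)$ with $x \in V_{i(x)}$, write $V_{i(x)} = e^{-1}(W_{i(x)})$, and use the Urysohn functions above to obtain $t_x \in C(Y)$ with $0 \le t_x \le 1$, $t_x(e(x)) = 1$, and $U_{t_x} \subseteq V_{i(x)}$. Put $T = \{t_x : x \in X\}$; this family is bounded above by $1$, and since $t_x(e(x)) = 1$ for every $x$, the pointwise supremum gives $\tfrac12 \le 1 \le \bigvee e^\flat[T]$. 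Applying condition (4) of Theorem~\ref{thm: removing epsilon} with $S = \{\tfrac12\}$, I obtain a countable $T' \subseteq T$ with $\tfrac12 = \bigwedge e^\flat[S] \le \bigvee e^\flat[T']$. Consequently, for each $x$ the pointwise value $\sup_{t \in T'} (t\circ e)(x) \ge \tfrac12 > 0$, so some $t \in T'$ has $(t \circ e)(x) > 0$, i.e.\ $x \in U_t \subseteq V_{i(t)}$. Therefore the countable collection $\{V_{i(t)} : t \in T'\}$ is a subcover of $\{V_i\}$, and $X$ is Lindel\"of.

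The main obstacle is the converse, and specifically the need for a genuine (not merely nonnegative) lower bound on the join of the chosen subfamily: the naive choice $S = \{0\}$ only yields $0 \le \bigvee e^\flat[T']$, which is vacuous when the $t_x$ are nonnegative. The key device is to insert the constant $\tfrac12$ on the $S$-side so that the output inequality $\tfrac12 \le \bigvee e^\flat[T']$ forces each point to be genuinely covered by some $U_t$; this is precisely where the full strength of (L) with a nonempty $S$ (equivalently formulation (4)) is used, rather than the one-sided forms. A secondary point to handle carefully is the reduction of an arbitrary open cover of $X$ to one by the sets $U_t$ with $t \in C(Y)$, which relies on the embedding $e$ together with Urysohn's lemma on the compact Hausdorff space $Y$.
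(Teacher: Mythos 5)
Your proof is correct, and while your left-to-right direction (Lindel\"of implies (L)) coincides with the paper's --- both verify condition (2) of Theorem~\ref{thm: removing epsilon} by covering $X$ with the open sets on which the functions from $T$ are positive and extracting a countable subcover --- your converse direction takes a genuinely different route. The paper works with characteristic functions: given an open cover $\mathcal{U}$, it uses Lemma~\ref{lem: closed}(2) to write each $\chi_U - 1$ as a join from $e^\flat[C(Y)]$, observes that the join of all these elements is $0$, and applies the one-sided form (3) of Theorem~\ref{thm: removing epsilon}; the shift by $-1$ is exactly what makes the conclusion $0 \le \bigvee e^\flat[S']$ informative, since a point left uncovered by the resulting countable family would force the value $-1$ there. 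You instead build explicit Urysohn functions $t_x$ supported inside cover elements with $t_x(e(x)) = 1$, and --- having correctly diagnosed that the one-sided forms (2)/(3) are vacuous for nonnegative families --- invoke the two-sided form (4) with $S = \{\tfrac12\}$ to obtain the strict threshold $\tfrac12 \le \bigvee e^\flat[T']$. Both arguments ultimately rest on joins in $B(X)$ being pointwise suprema of families bounded above. Your route avoids Lemma~\ref{lem: closed} and the semicontinuity machinery entirely, at the cost of the Urysohn construction and of needing the nonempty-$S$ form of (L); the paper's route is shorter given its earlier lemmas and needs only the one-sided form. One small point to flag: in your application of form (4) you should remark that $S'$ may be taken to be nonempty, hence equal to $\{\tfrac12\}$; this is harmless, since the paper's proof of (3) $\Rightarrow$ (4) in fact produces nonempty countable families, and enlarging $S'$ only lowers the meet, but as stated a countable subset of a singleton could be empty, in which case the meet $\bigwedge e^\flat[S']$ does not exist in $B(X)$.
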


\begin{proof}
Suppose $e^\flat$ satisfies (L). Let $\mathcal{U}$ be an open cover of $X$. If $U \in \mathcal{U}$, then $\chi_U$ is open with respect to
$e^\flat$ by Lemma~\ref{lem: closed}(2). Therefore, $\chi_U - 1$ is open, and so there is $S_U \subseteq C(Y)$ with
$\chi_U - 1 = \bigvee e^\flat[S_U]$. Since $\mathcal{U}$ is a cover of $X$, $\bigvee \{ \chi_U \mid U \in \mathcal{U} \} = 1$.
Set $S = \bigcup \{ S_U \mid U \in \mathcal{U} \}$. Then
\[
\bigvee e^\flat[S] = \bigvee \{ \bigvee e^\flat[S_U] \mid U \in \mathcal{U} \}  = \bigvee \{ \chi_U - 1 \mid U \in \mathcal{U} \} = \bigvee \{\chi_U \mid U \in \mathcal{U} \} - 1  = 0. 
\]
Since (L) holds, by Theorem~\ref{thm: removing epsilon},
there is a countable subset $S'$ of $S$ with $0 \le \bigvee e^\flat[S']$. Let $s \in S'$. Then there is
$U \in \mathcal{U}$ with $e^\flat(s) \le \chi_U - 1$. Therefore, there is a countable subset $\mathcal{V}$ of $\mathcal{U}$ such that
\[
0 \le \bigvee e^\flat[S'] \le \bigvee \{ \chi_U -1 \mid U \in \mathcal{V} \} \le \bigvee \{ \chi_U - 1 \mid U \in \mathcal{U} \} = 0.
\]
Thus, $1 = \bigvee \{ \chi_U \mid U \in \mathcal{V} \}$, which shows that
$\mathcal{V}$ is a countable subcover of $X$. Consequently, $X$ is Lindel\"{o}f.

Conversely, suppose that $X$ is Lindel\"of. Let $0 < \varepsilon \le \bigvee e^\flat[T]$ for some subset $T$ of $C(Y)$. If $x \in X$, then there is $g \in T$ with $g(e(x)) > \varepsilon/2$, which yields $X = \bigcup \{e^{-1}g^{-1}(\varepsilon/2, \infty) \mid g \in T\}$. Since $X$ is Lindel\"of, there is a countable subset $\{g_n\}$ of $T$ with $X = \bigcup_n e^{-1}g_n^{-1}(\varepsilon/2, \infty)$. Therefore, for each $x \in X$ there is $n$ with $g_n(e(x)) > \varepsilon/2$. This means that $\bigvee_n e^\flat(g_n) > \varepsilon/2 > 0$. Thus, $e^\flat$ satisfies (L) by Theorem~\ref{thm: removing epsilon}.
\end{proof}

There exist basic extensions that satisfy (L), but are not maximal, as the following example shows.

\begin{example}
Let $e^\flat : C(Y) \to B(X)$ be the basic extension of
Example~\ref{ex: (T) weaker than (N)}. Then $X$ is Lindel\"{o}f since $X$ is countable. Moreover, the argument of the example shows that
any $f \in B(X)$ that is a meet (resp.~join) from $e^\flat[C(Y)]$ can be written as a countable meet (resp.~join) by picking an
appropriate subset. From this it follows that $e^\flat$ satisfies (L). It is not a maximal extension since $e$ is not isomorphic
to the Stone-\v{C}ech compactification $s : X \to \beta X$.
\end{example}

It follows that (L) does not imply maximality, and so neither does it imply normality. Since every Lindel\"{o}f space is normal
(see, e.g., \cite[Thm.~3.8.2]{Eng89}), this indicates that we need a stronger condition to have an appropriate notion of a Lindel\"{o}f
basic extension.

\begin{definition}
Let $\alpha : A \to B$ be a basic extension and $S,T\subseteq A$.
\begin{enumerate}
\item[(SL)] If $\bigwedge \alpha[S] \le \bigvee \alpha[T]$, then there are countable subsets $S' \subseteq S$, $T' \subseteq T$, and
$a \in A$ with $\bigwedge \alpha[S'] \le \alpha(a) \le \bigvee \alpha[T']$.
\end{enumerate}
\end{definition}

\begin{theorem} \label{thm: Lind}
Let $e : X \to Y$ be a compactification and $e^\flat : C(Y) \to B(X)$ the corresponding basic extension. Then $e^\flat$ satisfies \emph{(SL)}
iff $X$ is Lindel\"{o}f and $e$ is isomorphic to $s : X \to \beta X$.
\end{theorem}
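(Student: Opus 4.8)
The plan is to show that, for the basic extension $e^\flat : C(Y) \to B(X)$, condition \emph{(SL)} is equivalent to the conjunction of \emph{(L)} and \emph{(N)}. Once this is isolated, the theorem follows by assembling results already proved: \emph{(L)} is equivalent to $X$ being Lindel\"of by Theorem~\ref{thm: L = Lind}, while \emph{(N)} is equivalent to $e^\flat$ being normal by Theorem~\ref{U2 implies normal}, which by Corollary~\ref{cor: normal}(1) means exactly that $X$ is normal and $e$ is isomorphic to $s$. Since every Lindel\"of space is normal, the two equivalent conditions together say precisely that $X$ is Lindel\"of and $e \cong s$.

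First I would prove that \emph{(SL)} implies both \emph{(L)} and \emph{(N)}. For \emph{(L)}: the conclusion of \emph{(SL)}, namely $\bigwedge e^\flat[S'] \le e^\flat(a) \le \bigvee e^\flat[T']$, gives in particular $\bigwedge e^\flat[S'] \le \bigvee e^\flat[T']$ for countable $S', T'$ whenever $\bigwedge e^\flat[S] \le \bigvee e^\flat[T]$; this is exactly condition (4) of Theorem~\ref{thm: removing epsilon}, hence \emph{(L)} holds. For \emph{(N)}: given $f$ closed and $g$ open with $f \le g$, write $f = \bigwedge e^\flat[S]$ and $g = \bigvee e^\flat[T]$ with $S, T \subseteq C(Y)$, and apply \emph{(SL)} to obtain countable $S' \subseteq S$, $T' \subseteq T$, and $a \in C(Y)$ with $\bigwedge e^\flat[S'] \le e^\flat(a) \le \bigvee e^\flat[T']$. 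Since $S' \subseteq S$ and $T' \subseteq T$, we have $f = \bigwedge e^\flat[S] \le \bigwedge e^\flat[S']$ and $\bigvee e^\flat[T'] \le \bigvee e^\flat[T] = g$, so $f \le e^\flat(a) \le g$; as $C(Y)$ is uniformly complete, this is \emph{(N)} in the form of Remark~\ref{rem: 3.15}.

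Conversely, I would show that \emph{(L)} and \emph{(N)} together imply \emph{(SL)}. Suppose $\bigwedge e^\flat[S] \le \bigvee e^\flat[T]$. By \emph{(L)}, via condition (4) of Theorem~\ref{thm: removing epsilon}, there are countable $S' \subseteq S$ and $T' \subseteq T$ with $\bigwedge e^\flat[S'] \le \bigvee e^\flat[T']$. Now $\bigwedge e^\flat[S']$ is closed and $\bigvee e^\flat[T']$ is open, so \emph{(N)}, again in the form of Remark~\ref{rem: 3.15} using uniform completeness of $C(Y)$, inserts $a \in C(Y)$ with $\bigwedge e^\flat[S'] \le e^\flat(a) \le \bigvee e^\flat[T']$, which is precisely the conclusion of \emph{(SL)}. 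Combining the two implications gives \emph{(SL)} $\Leftrightarrow$ \emph{(L)} $\wedge$ \emph{(N)}, and the two displayed translations complete the proof.

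The argument is largely bookkeeping; the points requiring care are (i) invoking Remark~\ref{rem: 3.15} so that the inserted element $a$ lies in $A = C(Y)$ rather than merely in $\widehat{A}$, which is legitimate exactly because $C(Y)$ is uniformly complete, and (ii) translating \emph{(N)} into the topological statement through Corollary~\ref{cor: normal}(1) applied to $e^\flat$, for which one uses that ${\sf C}(e^\flat)$ recovers the compactification $e$ up to equivalence, so that normality of the extension corresponds to $X$ being normal together with $e$ being the Stone-\v{C}ech compactification. I expect no single step to be a serious obstacle once the equivalence \emph{(SL)} $\Leftrightarrow$ \emph{(L)} $\wedge$ \emph{(N)} has been recognized.
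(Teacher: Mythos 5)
Your proposal is correct and follows essentially the same route as the paper: the paper's proof consists precisely of the observation that \emph{(SL)} holds iff both \emph{(L)} and \emph{(N)} hold (which the paper calls ``straightforward'' and you spell out carefully, including the correct use of Remark~\ref{rem: 3.15} via uniform completeness of $C(Y)$), followed by an appeal to Theorems~\ref{thm: L = Lind} and~\ref{U2 implies normal}. Your write-up simply makes explicit the bookkeeping the paper leaves to the reader.
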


\begin{proof}
It is straightforward to see
that $e^\flat$ satisfies (SL) iff it satisfies both (L) and (N). The result then follows from Theorems~\ref{thm: L = Lind}
and \ref{U2 implies normal}.
\end{proof}

\begin{definition}
\begin{enumerate}
\item[]
\item We call a basic extension $\alpha : A \to B$ \emph{Lindel\"{o}f} provided $A \in \ubal$ and $\alpha$ satisfies (SL).
\item Let $\lbasic$ be the full subcategory of $\ubasic$ consisting of Lindel\"{o}f extensions.
\end{enumerate}
\end{definition}

\begin{remark}
If $\alpha : A \to B$ is a Lindel\"{o}f extension, then $\alpha$ satisfies (N). Therefore,
$\lbasic$ is a full subcategory of $\nbasic$.
\end{remark}

\begin{definition}
Let $\Lind$ be the full subcategory of $\creg$ consisting of Lindel\"{o}f spaces.
\end{definition}

The following theorem follows immediately from 
Theorems~\ref{thm: norm} and~\ref{thm: Lind}.

\begin{theorem}
$\Lind$ is dually equivalent to $\lbasic$.
\end{theorem}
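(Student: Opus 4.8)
The plan is to show that the dual equivalence $\norm \simeq \nbasic$ provided by Theorem~\ref{thm: norm} restricts to the full subcategories $\Lind \subseteq \norm$ and $\lbasic \subseteq \nbasic$. That $\Lind$ sits inside $\norm$ as a full subcategory uses that every Lindel\"of space is normal, and that $\lbasic$ is a full subcategory of $\nbasic$ was recorded in the Remark preceding the statement. Since both inclusions are full and the natural isomorphisms establishing $\norm \simeq \nbasic$ restrict automatically once the functors preserve the two subcategories, it suffices to verify the object-level correspondence: the functor $\norm \to \nbasic$ carries $\Lind$ into $\lbasic$, and the functor $\nbasic \to \norm$ carries $\lbasic$ into $\Lind$.

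For the first direction I would start with a Lindel\"of space $X$. Under the equivalence its image is the normal extension $\iota : C^*(X) \to B(X)$, which is isomorphic to $s^\flat : C(\beta X) \to B(X)$ by Corollary~\ref{cor: normal}. Applying Theorem~\ref{thm: Lind} to the compactification $e = s : X \to \beta X$, the extension $s^\flat$ satisfies \emph{(SL)} precisely when $X$ is Lindel\"of and $s$ is isomorphic to $s$; the latter clause is automatic, so since $X$ is Lindel\"of, $s^\flat$ satisfies \emph{(SL)}. Hence $\iota$ is a Lindel\"of extension and the image lands in $\lbasic$. For the converse direction I would take a Lindel\"of extension $\alpha : A \to B$. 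By definition $A \in \ubal$ and $\alpha$ satisfies \emph{(SL)}, so $\alpha$ satisfies \emph{(N)} and is therefore normal; in particular $\alpha$ is maximal, so $\alpha_\flat : X_B \to Y_A$ is (isomorphic to) the Stone-\v{C}ech compactification of $X_B$. Because $A \in \ubal$, the dual equivalence between $\C$ and $\ubasic$ of Theorem~\ref{duality cptf} identifies $\alpha$ with $(\alpha_\flat)^\flat : C(Y_A) \to B(X_B)$, and \emph{(SL)} is an intrinsic property preserved by this isomorphism. Applying Theorem~\ref{thm: Lind} to $e = \alpha_\flat$ then gives that $(\alpha_\flat)^\flat$ satisfies \emph{(SL)} iff $X_B$ is Lindel\"of and $\alpha_\flat$ is isomorphic to $s$; since the second condition already holds by maximality, we conclude that $X_B$ is Lindel\"of, so the image $X_B$ of $\alpha$ lies in $\Lind$.

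The argument is essentially bookkeeping once Theorem~\ref{thm: Lind} is in hand, and the only point requiring a little care is the interplay between \emph{(SL)} and maximality. The mild obstacle is recognizing that, because normal extensions are already maximal, the clause ``$e$ is isomorphic to $s$'' appearing in Theorem~\ref{thm: Lind} is automatically satisfied for the extensions under consideration, so that \emph{(SL)} becomes equivalent to Lindel\"ofness of $X_B$ alone. With the two object inclusions verified, the restricted functors together with the restricted natural isomorphisms furnish the desired dual equivalence $\Lind \simeq \lbasic$.
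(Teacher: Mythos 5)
Your proof is correct and takes essentially the same route as the paper: the paper's proof is exactly the observation that the dual equivalence of Theorem~\ref{thm: norm} restricts along the object-level characterization given by Theorem~\ref{thm: Lind}, which is the bookkeeping you carry out. The details you make explicit (fullness of the two inclusions, and transferring \emph{(SL)} across the isomorphism $\alpha \cong (\alpha_\flat)^\flat$ supplied by Theorem~\ref{duality cptf}) are precisely what the paper leaves implicit in its one-line deduction.
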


\section{locally compact extensions}

In this final section we define locally compact basic extensions and prove that they dually correspond to locally compact spaces.
We then conclude by characterizing the one-point compactification of a locally compact Hausdorff space by means of minimal basic extensions.

\begin{definition} \label{def: alpha compact}
Let $\alpha : A \to B$ be a basic extension.
\begin{enumerate}
\item We call $b \in B$ \emph{$\alpha$-compact} if for $\varepsilon > 0$ and a subset $T$ of nonnegative elements of $A$, from
$|b| + \varepsilon \le \bigvee \alpha[T]$ it follows that there is a finite subset $T'$ of $T$ with $|b| \le \bigvee \alpha[T']$.
\item We call $a \in A$ \emph{$\alpha$-compact} if $\alpha(a)$ is $\alpha$-compact.
\end{enumerate}
\end{definition}

\begin{remark}
By Remark~\ref{rem: (C)}(3), a basic extension $\alpha : A \to B$ satisfies (C) if for $\varepsilon > 0$ and $T\subseteq A$,
from $\varepsilon \le \bigvee \alpha[T]$ it follows that there is a finite $T' \subseteq T$ with $0 \le \bigvee\alpha[T']$.
Thus, $0$ being $\alpha$-compact is weaker than $\alpha$ satisfying (C) since in (C) the set $T \subseteq A$ need not consist
of nonnegative elements.
\end{remark}

Definition~\ref{def: alpha compact} is motivated by the following lemma.

\begin{lemma} \label{lem: compact elements}
Let $e : X \to Y$ be a compatification and $\alpha = e^\flat : C(Y) \to B(X)$ the corresponding basic extension. A subset $F$ of $X$ is
compact iff $\chi_F$ is $\alpha$-compact.
\end{lemma}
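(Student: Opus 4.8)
The plan is to prove both implications by translating between open covers of $F$ in $X$ and joins of nonnegative functions from $C(Y)$, using throughout that joins in $B(X)$ are computed pointwise (as $B(X)$ is the Dedekind complete $\ell$-algebra of bounded functions under the pointwise order) and that $e$ is a topological embedding, so that the open sets of $X$ are exactly the sets $e^{-1}(V)$ with $V$ open in $Y$. Since $\chi_F \ge 0$ we have $|\chi_F| = \chi_F$, so $\alpha$-compactness of $\chi_F$ is the assertion that for every $\varepsilon > 0$ and every set $T$ of nonnegative elements of $C(Y)$ with $\chi_F + \varepsilon \le \bigvee e^\flat[T]$ there is a finite $T' \subseteq T$ with $\chi_F \le \bigvee e^\flat[T']$.

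First I would prove that $F$ compact implies $\chi_F$ is $\alpha$-compact. Given such $\varepsilon$ and $T$, for each $x \in F$ the pointwise inequality gives $\sup_{t \in T} t(e(x)) \ge 1 + \varepsilon > 1$, so I can choose $t_x \in T$ with $t_x(e(x)) > 1$. The sets $U_x := e^{-1}(t_x^{-1}(1,\infty))$ are open in $X$ and cover $F$, so compactness of $F$ yields a finite subcover indexed by $x_1, \dots, x_n$. Taking $T' = \{t_{x_1}, \dots, t_{x_n}\}$, for $x \in F$ some $U_{x_i}$ contains $x$, whence $\bigvee e^\flat[T'](x) \ge t_{x_i}(e(x)) > 1 = \chi_F(x)$, while for $x \notin F$ nonnegativity gives $\bigvee e^\flat[T'](x) \ge 0 = \chi_F(x)$; thus $\chi_F \le \bigvee e^\flat[T']$.

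For the converse I would start from an arbitrary open cover $\mathcal{U}$ of $F$ and build a test family $T$. Writing each $U \in \mathcal{U}$ as $U = e^{-1}(V_U)$ with $V_U$ open in $Y$, and using that $Y$ is compact Hausdorff (hence completely regular), for each $x \in F$ and each $U \in \mathcal{U}$ with $x \in U$ I can pick $t_{x,U} \in C(Y)$ with $0 \le t_{x,U} \le 2$, $t_{x,U}(e(x)) = 2$, and $t_{x,U} = 0$ on $Y \setminus V_U$. Adjoining the constant function $1/2$, I let $T$ consist of all these $t_{x,U}$ together with $1/2$; all are nonnegative and bounded by $2$, so $\bigvee e^\flat[T]$ exists, and one checks $\chi_F + 1/2 \le \bigvee e^\flat[T]$ (the join is $\ge 2$ on $F$ from the $t_{x,U}$ and $\ge 1/2$ off $F$ from the constant). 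Applying $\alpha$-compactness of $\chi_F$ with $\varepsilon = 1/2$ produces a finite $T' \subseteq T$ with $\chi_F \le \bigvee e^\flat[T']$, and the finitely many $U$ whose associated $t_{x,U}$ lie in $T'$ are the desired subcover.

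The main obstacle, and the point requiring care in this last step, is that the inequality feeding $\alpha$-compactness must hold at every point of $X$, not only on $F$, which is why the constant $1/2$ is adjoined; I must then ensure this constant cannot spuriously cover points of $F$ when $T'$ is extracted. This is exactly why its value is taken to be $1/2 < 1$: for $x \in F$ we have $\chi_F(x) = 1 \le \bigvee e^\flat[T'](x)$, and since the constant contributes only $1/2$, the maximum must be attained by some $t_{x_i,U_i}(e(x)) \ge 1 > 0$, forcing $e(x) \in V_{U_i}$ and hence $x \in U_i$. Thus only the genuine cover members are needed, and $F$ is compact.
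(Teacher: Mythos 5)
Your proof is correct, and both directions follow the same overall strategy as the paper's; the direction ``$F$ compact $\Rightarrow \chi_F$ $\alpha$-compact'' is essentially identical (pick, for each $x\in F$, a function of $T$ exceeding $1$ at $e(x)$, cover $F$ by preimages of superlevel sets, extract a finite subcover). In the converse direction the skeleton is also the same---turn an open cover of $F$ into a family of nonnegative functions with a $+1/2$ offset, apply $\alpha$-compactness with $\varepsilon = 1/2$, and read off a finite subcover---but your test family is genuinely different, and in a way that buys something. The paper writes the cover as $\{e^{-1}(V) \mid V \in \mathcal{V}\}$ and feeds the functions $\chi_V + 1/2$ into the $\alpha$-compactness condition; these are not elements of $C(Y)$, so that step, read literally, needs the additional observation (via Lemma~\ref{lem: closed}, plus truncation to arrange nonnegativity) that each $\chi_{e^{-1}(V)}$ is a join of nonnegative elements of $e^\flat[C(Y)]$. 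Your construction instead produces honest elements of $C(Y)$: Urysohn-type functions $t_{x,U}$ subordinate to the cover, obtained from complete regularity of the compact Hausdorff space $Y$, together with the adjoined constant $1/2$, so Definition~\ref{def: alpha compact} applies verbatim. The price is a slightly larger family and the extra care---which you correctly supply by taking the constant strictly below $1$---in ruling out that the constant alone witnesses $\chi_F \le \bigvee e^\flat[T']$ on $F$; the gain is an argument that conforms strictly to the requirement $T \subseteq C(Y)$, where the paper's write-up is looser.
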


\begin{proof}
Let $F\subseteq X$. First suppose that $\chi_F$ is $\alpha$-compact. If $\mathcal{U}$ is an open cover of $F$, then we may write
$\mathcal{U} = \{ e^{-1}(V) \mid V \in \mathcal{V} \}$ for some family $\mathcal V$ of open sets of $Y$. We have
\[
\chi_F \le \bigvee \{ \chi_U \mid U \in \mathcal{U} \} = \bigvee \{ \alpha(\chi_V) \mid V \in \mathcal{V} \},
\]
so
\[
(\chi_F + 1/2)  \le \bigvee \{ \chi_U + 1/2 \mid U \in \mathcal{U} \} = \bigvee \{ \alpha(\chi_V) + 1/2 \mid V \in \mathcal{V} \}.
\]
Since $\chi_F$ is $\alpha$-compact, there are $V_1, \dots, V_n \in \mathcal{V}$ with
\[
\chi_F  \le (\alpha(\chi_{V_1}) + 1/2) \vee \cdots \vee (\alpha(\chi_{V_n}) + 1/2) =
(\alpha(\chi_{V_1}) \vee \cdots \vee \alpha(\chi_{V_n})) + 1/2 = \chi_U + 1/2,
\]
where $U = e^{-1}(V_1) \cup \cdots \cup e^{-1}(V_n)$. Thus, $F \subseteq U$, and so $F$ is compact.

Conversely, suppose that $F$ is compact and $\chi_F + \varepsilon \le \bigvee \alpha[T]$ for $\varepsilon > 0$ and a set $T$ of nonnegative
elements of $C(Y)$. Then, for each $x \in F$, there is $g \in T$ with $g(e(x)) > 1 + \varepsilon/2$. Therefore,
$\{ e^{-1}g^{-1}(1 + \varepsilon/2, \infty)\}$ is an open cover of $F$. Since $F$ is compact, there is a finite subcover, say
$F \subseteq \bigcup_{i=1}^n e^{-1}g_i^{-1}(1+\varepsilon/2, \infty)$. Thus, $(\alpha(g_1) \vee \cdots \vee \alpha(g_n))(x) \ge \chi_F(x)$
for each $x \in X$. This inequality also holds for all $x \notin F$ since $\chi_F(x) = 0$ and the $g_i$ are nonnegative. Consequently,
$\chi_F \le \alpha(g_1) \vee \cdots \vee \alpha(g_n)$, and so $\chi_F$ is $\alpha$-compact.
\end{proof}

Further connection between Condition (C) and $\alpha$-compactness is given in Proposition~\ref{(C) versus alpha compact}, which requires
the following lemma.

\begin{lemma} \label{lem: auxiliary}
Let $A\in\bal$ and $a,b\in A$. If $a \le b \vee (a-1)$, then $a \le b$.
\end{lemma}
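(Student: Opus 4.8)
The plan is to rewrite the hypothesis as a statement about a meet with $1$ and then reduce it to a pointwise real inequality via the Yosida representation. First I would use the $\ell$-group identity $x-(y\vee z)=(x-y)\wedge(x-z)$ (translation is a lattice automorphism and $-(y\vee z)=(-y)\wedge(-z)$) to compute
\[
a-\bigl(b\vee(a-1)\bigr)=(a-b)\wedge\bigl(a-(a-1)\bigr)=(a-b)\wedge 1 .
\]
Hence the hypothesis $a\le b\vee(a-1)$ is equivalent to $(a-b)\wedge 1\le 0$. Writing $c=a-b$, the lemma reduces to the single implication: if $c\wedge 1\le 0$, then $c\le 0$.

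To finish, I would pass to the Yosida representation $\zeta_A:A\to C(Y_A)$, which is an injective unital $\ell$-algebra homomorphism and hence an order embedding preserving $\wedge$, $\vee$, and the constant $1$. Applying $\zeta_A$ to $c\wedge 1\le 0$ gives $\zeta_A(c)\wedge 1\le 0$ in $C(Y_A)$, an inequality of continuous functions that may be checked at each point $M\in Y_A$. There it reads $\min\{\zeta_A(c)(M),1\}\le 0$, and the elementary real fact that $\min\{t,1\}\le 0$ forces $t\le 0$ yields $\zeta_A(c)(M)\le 0$ for every $M$. Thus $\zeta_A(c)\le 0$, and since $\zeta_A$ is an order embedding, $c\le 0$, i.e.\ $a\le b$.

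Alternatively, one can conclude purely algebraically: the finite distributive law gives $(c\wedge 1)\vee 0=(c\vee 0)\wedge(1\vee 0)=c^{+}\wedge 1$, so $c\wedge 1\le 0$ forces $c^{+}\wedge 1=0$. Since the objects of $\bal$ are $f$-rings (each embeds as an $\ell$-subalgebra of some $C(Y_A)$), disjoint elements have zero product, whence $c^{+}=c^{+}\cdot 1=0$ and again $c\le 0$.

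The only genuine subtlety—and the step I expect to be the crux—is the implication $c\wedge 1\le 0\Rightarrow c\le 0$. It fails in arbitrary lattice-ordered groups and really uses that $A$ lives inside a ring of continuous functions (equivalently, that $A$ is archimedean with strong order unit, hence an $f$-ring). Everything preceding it is a routine lattice computation, so the proof amounts to the reduction above together with this representation-theoretic input.
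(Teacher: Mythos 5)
Your proof is correct, and up to the crux it is the same as the paper's: both arguments rewrite the hypothesis as $(a-b)\wedge 1\le 0$ (the paper multiplies by $-1$ and adds $a$; you use the translation identity $x-(y\vee z)=(x-y)\wedge(x-z)$). The difference is how the implication $(a-b)\wedge 1\le 0\Rightarrow a-b\le 0$ is settled. You pass through the Yosida representation $\zeta_A:A\to C(Y_A)$ and check the inequality pointwise (or, in your alternative, reduce to $c^{+}\wedge 1=0$ and invoke the $f$-ring property, which you again justify via the embedding into $C(Y_A)$). The paper instead stays entirely inside abstract $\ell$-group theory: from $1\wedge(a-b)\le 0$ it deduces $1\wedge[(a-b)\vee 0]=0$ and then cites Birkhoff's lemma that a positive element disjoint from a strong order unit must be zero (if $x\wedge 1=0$ then $x\wedge n=0$ for all $n$, and $x\le n$ forces $x=x\wedge n=0$). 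This is strictly lighter machinery: it uses only that $1$ is a strong order unit, with no appeal to archimedeanness, to $f$-rings, or to any functional representation. Consequently your closing diagnosis---that the crux ``really uses'' that $A$ lives inside a ring of continuous functions, equivalently that $A$ is archimedean with strong unit---overstates what is needed: the strong order unit alone suffices, and the implication holds in any $\ell$-group with strong unit $1$, archimedean or not. Your argument is nonetheless legitimate in this setting, since the paper has the monomorphism $\zeta_A$ available for every $A\in\bal$; it simply buys the conclusion with a theorem (the representation) where the paper pays with an elementary lemma.
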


\begin{proof}
Suppose that $a \le b \vee (a-1)$. Multiplying by $-1$ gives $(1-a) \wedge -b \le -a$, and adding $a$ yields $1 \wedge (a-b) \le 0$.
Therefore, $1 \wedge [(a-b)\vee 0] = 0$. Since $1$ is a strong order-unit, by \cite[p.~308, Lem.~4]{Bir79}, $(a-b) \vee 0 = 0$, which gives
$a-b \le 0$. Thus, $a \le b$.
\end{proof}

\begin{proposition} \label{(C) versus alpha compact}
For a basic extension $\alpha : A \to B$, the following are equivalent.
\begin{enumerate}
\item $\alpha$ satisfies \emph{(C)}.
\item $r$ is $\alpha$-compact for every $r \in \mathbb{R}$.
\item $1$ is $\alpha$-compact.
\end{enumerate}
\end{proposition}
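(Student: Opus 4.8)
The plan is to prove the cycle of implications (1) $\Rightarrow$ (2) $\Rightarrow$ (3) $\Rightarrow$ (1). The implication (2) $\Rightarrow$ (3) is immediate since $1 \in \mathbb{R}$. For (1) $\Rightarrow$ (2), I would fix $r \in \mathbb{R}$, take $\varepsilon > 0$ and a set $T$ of nonnegative elements of $A$ with $|r| + \varepsilon \le \bigvee \alpha[T]$, and then apply (C) with the singleton $S = \{|r|\}$. Since $\bigwedge \alpha[S] = \alpha(|r|) = |r|$, the hypothesis reads $\bigwedge \alpha[S] + \varepsilon \le \bigvee \alpha[T]$, so (C) produces a finite $T' \subseteq T$, and because $S$ is a singleton this gives $|r| \le \bigvee T'$ in $A$; by Remark~\ref{rem: (C)}(1) the latter is equivalent to $|r| \le \bigvee \alpha[T']$ in $B$, which is exactly what is required for $r$ to be $\alpha$-compact.

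The substance of the proposition lies in (3) $\Rightarrow$ (1). Here I would work with the reformulation of (C) from Remark~\ref{rem: (C)}(3): it suffices to show that whenever $T \subseteq A$, $\varepsilon > 0$, and $\varepsilon \le \bigvee \alpha[T]$, there is a finite $T' \subseteq T$ with $0 \le \bigvee \alpha[T']$. The difficulty is that $\alpha$-compactness of $1$ is only available for sets of \emph{nonnegative} elements, whereas the $T$ in (C) is arbitrary. To bridge this, I would pass to the nonnegative set $U = \{ \tfrac{2}{\varepsilon}(t \vee 0) \mid t \in T\}$. Two points then need checking: first, that $\bigvee \alpha[U]$ exists, which follows because $\alpha[T]$ is bounded above (its join exists) and $\alpha(t \vee 0) = \alpha(t) \vee 0$ is bounded above by $M \vee 0$ for any upper bound $M$ of $\alpha[T]$; second, that $\bigvee \alpha[U] \ge 2$, which comes from $\tfrac{2}{\varepsilon}(\alpha(t) \vee 0) \ge \tfrac{2}{\varepsilon}\alpha(t)$ together with $\bigvee \alpha[T] \ge \varepsilon$. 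Since $2 = 1 + 1$, applying $\alpha$-compactness of $1$ (with margin $\delta = 1$) yields a finite $U' \subseteq U$, corresponding to elements $t_1, \dots, t_n \in T$, with $1 \le \bigvee \alpha[U'] = \tfrac{2}{\varepsilon}\big[(\bigvee_i \alpha(t_i)) \vee 0\big]$.

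The last step, which I expect to be the main obstacle, is to convert the resulting inequality $\big(\bigvee_i \alpha(t_i)\big) \vee 0 \ge \varepsilon/2$ into the desired $0 \le \bigvee_i \alpha(t_i)$. Writing $c = \bigvee_i \alpha(t_i)$, this is the purely lattice-theoretic deduction: from $c \vee 0 \ge \varepsilon/2 > 0$ conclude $c \ge 0$. I would argue through the negative part $c^- = (-c) \vee 0$, using the standard $\ell$-group identity $c^- \wedge (c \vee 0) = 0$ to obtain $c^- \wedge \tfrac{\varepsilon}{2} = 0$, and then invoke the strong-unit lemma \cite[p.~308, Lem.~4]{Bir79} (after scaling by $2/\varepsilon$, exactly as in the proof of Lemma~\ref{lem: auxiliary}) to conclude $c^- = 0$, whence $c = c \vee 0 \ge \varepsilon/2 > 0$. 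Taking $T' = \{t_1, \dots, t_n\}$ then gives $0 \le \bigvee \alpha[T']$, completing (3) $\Rightarrow$ (1) and hence the proof.
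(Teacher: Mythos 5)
Your proof is correct, and its skeleton --- the cycle $(1) \Rightarrow (2) \Rightarrow (3) \Rightarrow (1)$, with $(1) \Rightarrow (2)$ via $S = \{|r|\}$ and $(2) \Rightarrow (3)$ trivial --- matches the paper exactly; the only divergence is in the mechanics of $(3) \Rightarrow (1)$. The paper shifts before truncating: it sets $S = \{(t+1) \vee 0 \mid t \in T\}$, so that $1 + \varepsilon \le \bigvee \alpha[S]$, extracts a finite $S'$ by $\alpha$-compactness of $1$, and then removes the truncation by the dedicated Lemma~\ref{lem: auxiliary} (from $a \le b \vee (a-1)$ conclude $a \le b$). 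You instead truncate and rescale, $U = \{\tfrac{2}{\varepsilon}(t \vee 0) \mid t \in T\}$, which normalizes the compactness margin to $1$, and you undo the truncation directly from the disjointness identity $c^+ \wedge c^- = 0$ together with the strong-unit lemma \cite[p.~308, Lem.~4]{Bir79}. These are variants of the same idea: the paper's Lemma~\ref{lem: auxiliary} is itself proved by deriving a disjointness $1 \wedge [(a-b) \vee 0] = 0$ and invoking the same Birkhoff lemma, so both routes bottom out in the same fact. Your version is slightly more self-contained (it does not need Lemma~\ref{lem: auxiliary} as a separate statement) and you explicitly verify that $\bigvee \alpha[U]$ exists via boundedness and Dedekind completeness of $B$, a point the paper leaves implicit for its set $S$; on the other hand, the paper's affine shift avoids dividing by $\varepsilon$ and keeps the bookkeeping marginally cleaner. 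Neither difference is substantive.
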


\begin{proof}
(1) $\Rightarrow$ (2). Suppose $r\in\mathbb{R}$ and $|r| + \varepsilon \le \bigvee \alpha[T]$ for $\varepsilon > 0$ and a subset $T$ of
nonnegative elements of $A$. Set $S = \{|r|\}$. By (C), there is a finite $T' \subseteq T$ with $|r| \le \bigvee \alpha[T']$. Thus, $r$
is $\alpha$-compact.

(2) $\Rightarrow$ (3). This is clear.

(3) $\Rightarrow$ (1). Suppose that $\varepsilon \le \bigvee \alpha[T]$ for $\varepsilon > 0$ and $T \subseteq A$. Set
$S = \{ (t + 1) \vee 0 \mid t \in T\}$. Then $1 + \varepsilon \le 1 + \bigvee \alpha[T] \le \bigvee \alpha[S]$. Since $S$ consists of nonnegative
elements of $A$ and 1 is $\alpha$-compact, there is a finite $S' \subseteq S$ with $1 \le \bigvee \alpha[S']$. Let
$T' \subseteq T$ be such that $S' = \{ (t+1) \vee 0 \mid t \in T'\}$. Then $1 \le \bigvee \alpha[S'] = (\bigvee \alpha[T'] + 1) \vee 0$.
By Lemma~\ref{lem: auxiliary}, $1 \le \bigvee (\alpha[T'] + 1) = \bigvee \alpha[T'] + 1$. Therefore, $0 \le \bigvee \alpha[T']$. Thus,
$\alpha$ satisfies (C) by Remark~\ref{rem: (C)}(3).
\end{proof}

\begin{remark} \label{rem: alpha compact}
For a basic extension $\alpha : A \to B$, the set of $\alpha$-compact elements of $B$ does not form an $\ell$-ideal in general. To see this,
let $\alpha = e^\flat : C(Y) \to B(X)$ be the basic extension corresponding to a compactification $e : X \to Y$. By
Lemma~\ref{lem: compact elements}, if $F$ is a compact subset of $X$, then $\chi_F$ is $\alpha$-compact. However, if $S$ is a nonempty subset
of $F$ which is not compact, then $\chi_S$ is not $\alpha$-compact, while $0 \le \chi_S \le \chi_C$.
Thus, in general, the set of $\alpha$-compact elements of $B$ is not an $\ell$-ideal of $B$.
\end{remark}

While Remark~\ref{rem: alpha compact} shows that the set of $\alpha$-compact elements of $B$ does not in general form an $\ell$-ideal of $B$, we show that the set of $\alpha$-compact elements of $A$ forms an $\ell$-ideal of $A$. 

\begin{definition}
For a basic extension $\alpha : A \to B$, let $I_\alpha =  \{ a \in A \mid a \textrm{ is } \alpha\textrm{-compact} \}$.
\end{definition}

\begin{lemma} \label{lem: Ialpha}
Let $\alpha : A \to B$ be a basic extension. Then $I_\alpha$ is an $\ell$-ideal of $A$.
\end{lemma}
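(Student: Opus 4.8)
The plan is to verify the three defining features of an $\ell$-ideal in turn: solidity (the implication ``$|a|\le|b|$ and $b\in I_\alpha$ imply $a\in I_\alpha$''), closure under addition, and the absorption property $AI_\alpha\subseteq I_\alpha$. Two preliminary observations streamline the argument. First, since $\alpha$ is a unital $\ell$-homomorphism, $|\alpha(a)|=\alpha(|a|)$, so $\alpha$-compactness of $a$ depends only on $|a|$; in particular $0\in I_\alpha$, and $a\in I_\alpha$ iff $|a|\in I_\alpha$ iff $-a\in I_\alpha$, so $I_\alpha$ is closed under negation. Second, once solidity and additive closure are established, absorption is automatic: given $a\in I_\alpha$ and $c\in A$, boundedness of $A$ provides $n\in\mathbb{N}$ with $|c|\le n$, whence $|ca|=|c|\,|a|\le n|a|=|na|$; additive closure puts $na\in I_\alpha$, and solidity then gives $ca\in I_\alpha$. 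Thus the real content lies in solidity and additive closure.

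For solidity I would first reduce to the case $0\le a\le b$ by replacing $a,b$ with $|a|,|b|$, and then argue by a translation trick. Given $\varepsilon>0$ and a set $T$ of nonnegative elements of $A$ with $\alpha(a)+\varepsilon\le\bigvee\alpha[T]$, I would form the set $\tilde T=\{\,t+(b-a)\mid t\in T\,\}$, which again consists of nonnegative elements since $b-a\ge 0$. Using the translation identity $\bigvee\alpha[\tilde T]=\alpha(b-a)+\bigvee\alpha[T]$ (a special case of Remark~\ref{rem: closed}(2)), the hypothesis upgrades to $\alpha(b)+\varepsilon\le\bigvee\alpha[\tilde T]$. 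Since $b$ is $\alpha$-compact, some finite $T_0\subseteq T$ satisfies $\alpha(b)\le\alpha(b-a)+\bigvee\alpha[T_0]$, and adding $-\alpha(b-a)$ to both sides yields $\alpha(a)\le\bigvee\alpha[T_0]$. Hence $a$ is $\alpha$-compact.

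Additive closure is where I expect the main obstacle. By solidity together with $|a+b|\le|a|+|b|$, it suffices to treat nonnegative $a,b$ and show $a+b\in I_\alpha$. The naive attempt --- covering $\alpha(a)$ and $\alpha(b)$ by finite subsets separately and taking their union --- fails, because two elements dominated by the same join need not have their sum dominated by it. The fix exploits a single lattice identity. Given a nonnegative cover $T$ with $\alpha(a)+\alpha(b)+\varepsilon\le\bigvee\alpha[T]$, I would first extract a finite $T_1\subseteq T$ with $\alpha(a)\le\bigvee\alpha[T_1]$, using $\alpha(a)+\varepsilon\le\bigvee\alpha[T]$ and $\alpha$-compactness of $a$. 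To account for $b$ lying above $a$, pass to the nonnegative set $\hat T=\{\,(t-a)\vee 0\mid t\in T\,\}$; since $\alpha(b)+\varepsilon\le\bigvee\{\alpha(t)-\alpha(a)\mid t\in T\}\le\bigvee\alpha[\hat T]$, $\alpha$-compactness of $b$ gives a finite $T_2\subseteq T$ with $\alpha(b)\le\bigvee\{\alpha((t-a)\vee 0)\mid t\in T_2\}$. The decisive identity is $a+(t-a)\vee 0=a\vee t$, which yields $\alpha(a)+\alpha(b)\le\bigvee\{\alpha(a)\vee\alpha(t)\mid t\in T_2\}$; as both $\alpha(a)$ and each $\alpha(t)$ with $t\in T_2$ are dominated by $\bigvee\alpha[T_1\cup T_2]$, we conclude $\alpha(a)+\alpha(b)\le\bigvee\alpha[T_1\cup T_2]$, so $a+b$ is $\alpha$-compact.

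Combining these steps, $I_\alpha$ is closed under negation and addition (hence an additive subgroup containing $0$), absorbs multiplication by $A$, and is solid, which is exactly what it means to be an $\ell$-ideal of $A$. The one genuinely delicate point is the additive closure, and the identity $a+(t-a)\vee 0=a\vee t$ is the device that converts a sum into a join so that a finite subcover can be assembled.
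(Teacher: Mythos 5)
Your proof is correct, and two of its three steps (solidity via the translation trick $\tilde T=\{t+(b-a)\mid t\in T\}$, and absorption from boundedness of $A$ plus solidity and additive closure) coincide with the paper's argument. Where you genuinely diverge is the additive closure, which both you and the paper identify as the crux. The paper proceeds indirectly: it first shows $I_\alpha$ is closed under finite joins (covering $|a|$ and $|b|$ separately and using $|a\vee b|\le|a|\vee|b|$, where the union-of-subcovers idea \emph{does} work) and under doubling (rescaling the cover by $1/2$), and then invokes the $\ell$-group inequality $|a+b|\le 2(|a|\vee|b|)$ together with solidity. You instead handle the sum in a single pass: extract a finite subcover $T_1$ for $a$, translate-and-truncate the cover to $\hat T=\{(t-a)\vee 0\mid t\in T\}$ to get a legitimate nonnegative cover for $b$, extract $T_2$, and recombine via the identity $a+(t-a)\vee 0=a\vee t$, which converts the sum into a join dominated by $\bigvee\alpha[T_1\cup T_2]$. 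Your route is more self-contained and mirrors the classical proof that a union of two compact sets is compact, corrected for the additive structure (you rightly note the naive union of subcovers fails for sums); the paper's route trades that one clever identity for two structurally simpler closure properties that are each verified by one-line manipulations. One small overstatement: you write $|ca|=|c|\,|a|$, which in a general $\ell$-ring is only the inequality $|ca|\le|c|\,|a|$; equality does hold here because every $A\in\bal$ embeds in $C(Y_A)$, but the inequality (which the paper uses) is all your argument needs.
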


\begin{proof}
It is clear that $0 \in I_\alpha$. Suppose that $b \in I_\alpha$ and $|a| \le |b|$. If $\alpha(|a|) + \varepsilon \le \bigvee \alpha[T]$ for
$\varepsilon > 0$ and a set $T$ of nonnegative elements of $A$, then $\alpha(|b|) + \varepsilon \le \bigvee \{ \alpha(t + |b|-|a|) \mid t \in T\}$.
Since $b \in I_\alpha$ there are $t_1,\dots, t_n \in T$ with $\alpha(|b|) \le \alpha(t_1+|b|-|a|) \vee \cdots \vee \alpha(t_n + |b|-|a|)$. Therefore,
$\alpha(|b|) \le \alpha(t_1)\vee \cdots \vee \alpha(t_n) + \alpha(|b|) - \alpha(|a|)$, so $\alpha(|a|) \le \alpha(t_1)\vee \cdots \vee \alpha(t_n)$. Thus,
$a \in I_\alpha$.

To show that $I_\alpha$ is closed under $+$, we show that it is closed under joins and doubling. Using the $\ell$-group inequality
$|a+b| \le 2(|a| \vee |b|)$ will then show that $I_\alpha$ is closed under $+$ by the previous paragraph. Suppose that $a,b \in I_\alpha$.
If $\alpha(|a \vee b|) + \varepsilon \le \bigvee \alpha[T]$ for $\varepsilon > 0$ and a set $T$ of nonnegative elements of $A$, then as
$|a| + \varepsilon, |b| + \varepsilon \le |a\vee b| + \varepsilon$, there are finite subsets $T_1, T_2$ of $T$ with
$\alpha(|a|) \le \bigvee \alpha[T_1]$ and $\alpha(|b|) \le \bigvee \alpha[T_2]$. Therefore,
$\alpha(|a \vee b|) \le \alpha(|a|) \vee \alpha(|b|) \le \bigvee \alpha[T_1 \cup T_2]$. Thus, $a\vee b \in I_\alpha$. Next, let $a \in I_\alpha$.
If $\alpha(2|a|) + \varepsilon \le \bigvee \alpha[T]$ for $\varepsilon > 0$ and a set $T$ of nonnegative elements of $A$, then
$\alpha(|a|) + \varepsilon/2 \le \bigvee \{ \alpha(t/2) \mid t \in T\}$. Therefore, there is a finite subset $T'$ of $T$ with
$\alpha(|a|) \le \alpha(t_1/2) \vee \cdots \vee \alpha(t_n/2)$. Thus, $\alpha(|a|) \le (\alpha(t_1) \vee \cdots \vee \alpha(t_n))/2$, and so
$\alpha(|2a|) \le \alpha(t_1) \vee \cdots \vee \alpha(t_n)$. Consequently, $2a \in I_\alpha$, and so $I_\alpha$ is closed under addition. Finally, suppose that $a \in A$ and $b \in I_\alpha$. Since $A$ is bounded, there is $n \in \mathbb{N}$ with $|a| \le n$. Therefore, $|ab| \le |a||b| \le n|b| = |nb|$, so $ab \in I_\alpha$. Thus, $I_\alpha$ is an $\ell$-ideal of $A$.
\end{proof}

\begin{proposition}
For a basic extension $\alpha : A \to B$, the three conditions of Proposition~\ref{(C) versus alpha compact} are equivalent to:
\begin{enumerate}
\item[(4)] $I_\alpha = A$. 
\end{enumerate}
\end{proposition}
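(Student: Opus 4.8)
The plan is to observe that conditions (1)--(3) of Proposition~\ref{(C) versus alpha compact} are already mutually equivalent, so it suffices to show that the new condition (4), namely $I_\alpha = A$, is equivalent to condition (3), that $1$ is $\alpha$-compact. I would establish the two implications between (3) and (4) separately, leaning entirely on the structural fact, already proved in Lemma~\ref{lem: Ialpha}, that $I_\alpha$ is an $\ell$-ideal of $A$.

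One direction is immediate. If $I_\alpha = A$, then in particular $1 \in I_\alpha$, which is by definition the assertion that $1$ is $\alpha$-compact. This gives (4) $\Rightarrow$ (3) with no computation.

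For (3) $\Rightarrow$ (4), I would argue as follows. Assume $1$ is $\alpha$-compact, i.e.\ $1 \in I_\alpha$. By Lemma~\ref{lem: Ialpha}, $I_\alpha$ is an $\ell$-ideal, hence closed under addition, so $n \cdot 1 \in I_\alpha$ for every $n \in \mathbb{N}$. Given an arbitrary $a \in A$, the fact that $1$ is a strong order unit of $A$ provides some $n \in \mathbb{N}$ with $|a| \le n \cdot 1$. Since $n \cdot 1 \ge 0$ we have $|n \cdot 1| = n \cdot 1$, so $|a| \le |n \cdot 1|$ with $n \cdot 1 \in I_\alpha$. The absorption property of the $\ell$-ideal $I_\alpha$ then forces $a \in I_\alpha$. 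As $a$ was arbitrary, $I_\alpha = A$, which is (4).

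I do not expect a genuine obstacle here: the only nontrivial ingredient is Lemma~\ref{lem: Ialpha}, which is available, and the remaining reasoning is the standard observation that an $\ell$-ideal containing the strong order unit $1$ must be all of $A$. The one point requiring minor care is to pair the strong order unit inequality $|a| \le n \cdot 1$ correctly with the $\ell$-ideal closure condition ``$|a| \le |b|$ and $b \in I_\alpha$ imply $a \in I_\alpha$,'' using that $|n \cdot 1| = n \cdot 1$.
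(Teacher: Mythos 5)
Your proof is correct and takes essentially the same approach as the paper: both directions match, with the paper's proof merely leaving implicit the routine fact (which you spell out) that an $\ell$-ideal containing the strong order unit $1$ must be all of $A$.
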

\begin{proof}
If
1 is $\alpha$-compact,
since $I_\alpha$ is an $\ell$-ideal of $A$ by Lemma~\ref{lem: Ialpha},
we see that $I_\alpha = A$. Conversely, if $I_\alpha = A$, then $1 \in I_\alpha$, so 1 is $\alpha$-compact.
\end{proof}

Let $e : X \to Y$ be a compactification and $\alpha = e^\flat : C(Y) \to B(X)$ the corresponding basic extension. We next determine when $f \in C(Y)$ belongs to $I_\alpha$. For this we recall (see, e.g.,\cite[Sec.~1.10]{GJ60}) that
\[
Z(f) = \{ y \in Y \mid f(y) = 0\}.
\]
We set
\[
\coz(f) = Y \setminus Z(f),
\]
and for an ideal $I$ of $C(Y)$,
\[
Z(I) = \bigcap \{ Z(f) \mid f \in I\}.
\]

\begin{remark} \label{rem: Z versus Zl}
Since $\xi_Y : Y \to Y_{C(Y)}$ is a homeomorphism, for each $\ell$-ideal $I$ of $C(Y)$, we have $Z_\ell(I) = \xi_Y[Z(I)] $.
\end{remark}

\begin{lemma} \label{lem: characterization of Ialpha}
Let $e : X \to Y$ be a compactification and $\alpha = e^\flat : C(Y) \to B(X)$ the corresponding basic extension. For $f \in C(Y)$,
the following are equivalent.
\begin{enumerate}
\item $f \in I_\alpha$.
\item $\cl_Y(\coz(f)) \subseteq e[X]$.
\item $e^{-1}\cl_Y(\coz(f))$ is a compact subset of $X$.
\end{enumerate}
\end{lemma}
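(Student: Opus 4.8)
The plan is to prove the cycle $(1)\Rightarrow(2)\Rightarrow(3)\Rightarrow(1)$, after two reductions. First, since $\alpha$-compactness of $\alpha(f)$ depends only on $|\alpha(f)|=\alpha(|f|)$ and since $\coz(f)=\coz(|f|)$, I may assume $f\ge 0$; scaling by a positive real (which changes none of the three conditions) lets me further assume $0\le f\le 1$, the case $f=0$ being trivial. Second, I will use throughout that joins in $B(X)$ are computed pointwise, so that $|\alpha(f)|+\varepsilon\le\bigvee\alpha[T]$ reads, for every $x\in X$, as $f(e(x))+\varepsilon\le\sup_{t\in T}t(e(x))$, and $\bigvee\alpha[T']$ dominates $\alpha(f)$ precisely when $\max_{t\in T'}t$ dominates $f$ pointwise on $e[X]$.

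The implication $(2)\Rightarrow(3)$ is purely topological: $C:=\cl_Y(\coz(f))$ is closed in the compact space $Y$, hence compact, and if $C\subseteq e[X]$ then, since $e$ is a homeomorphism onto its image, $e^{-1}(C)=e^{-1}\cl_Y(\coz(f))$ is homeomorphic to $C$ and therefore compact. For $(3)\Rightarrow(1)$, set $K=e^{-1}\cl_Y(\coz(f))$ and suppose $|\alpha(f)|+\varepsilon\le\bigvee\alpha[T]$ for $\varepsilon>0$ and a set $T$ of nonnegative elements of $C(Y)$. For each $x\in K$ choose $t_x\in T$ with $t_x(e(x))>f(e(x))+\varepsilon/2$ and put $G_x=\{x'\in X\mid t_x(e(x'))>f(e(x'))\}$, an open subset of $X$ containing $x$. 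Compactness of $K$ yields a finite subcover $G_{x_1},\dots,G_{x_n}$, and I claim $T'=\{t_{x_1},\dots,t_{x_n}\}$ works: if $x\in K$ then $x\in G_{x_i}$ for some $i$ and $t_{x_i}(e(x))>f(e(x))$, whereas if $x\notin K$ then $e(x)\notin\coz(f)$, so $f(e(x))=0\le\max_i t_{x_i}(e(x))$ because the $t_{x_i}$ are nonnegative. Hence $\alpha(f)\le\bigvee\alpha[T']$, so $f\in I_\alpha$.

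The main obstacle is $(1)\Rightarrow(2)$, which I will establish by contraposition. Suppose $y_0\in\cl_Y(\coz(f))\setminus e[X]$. Since $Y$ is compact Hausdorff, hence normal, for each open neighborhood $U$ of $y_0$ I may pick an open $W_U$ with $y_0\in W_U\subseteq\cl_Y(W_U)\subseteq U$ and, by Urysohn's lemma, a function $t_U\in C(Y)$ with $0\le t_U\le 2$, $t_U=0$ on $\cl_Y(W_U)$, and $t_U=2$ on $Y\setminus U$. Let $T=\{t_U\}$ range over the neighborhood filter of $y_0$. On the one hand $|\alpha(f)|+1\le\bigvee\alpha[T]$: for $x\in X$ we have $e(x)\ne y_0$, so Hausdorffness provides $U$ with $e(x)\notin U$, giving $t_U(e(x))=2\ge f(e(x))+1$. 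On the other hand, no finite $T'=\{t_{U_1},\dots,t_{U_n}\}$ satisfies $\alpha(f)\le\bigvee\alpha[T']$: the set $W'=\bigcap_i W_{U_i}$ is an open neighborhood of $y_0$ on which every $t_{U_i}$ vanishes, and since $y_0\in\cl_Y(\coz(f))$ and $e[X]$ is dense, the open set $W'\cap\coz(f)$ meets $e[X]$; choosing $x$ with $e(x)\in W'\cap\coz(f)\cap e[X]$ gives $f(e(x))>0=\max_i t_{U_i}(e(x))$. Thus $\alpha(f)$ is not $\alpha$-compact, which is the desired contrapositive.

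The delicate points, and the reason I single out $(1)\Rightarrow(2)$ as the hard step, are twofold: arranging each $t_U$ to vanish on an entire neighborhood of $y_0$ (rather than merely at $y_0$), so that any finite subfamily leaves a whole neighborhood of $y_0$ uncovered, and then combining density of $e[X]$ with $y_0\in\cl_Y(\coz(f))$ to locate an honest point of $e[X]$ inside the cozero set where coverage fails. The remaining implications reduce to the pointwise reading of joins in $B(X)$ and to the fact that $e$ is a homeomorphism onto a dense subspace of the compact space $Y$.
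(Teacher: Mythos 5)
Your proposal is correct and follows essentially the same route as the paper: the same cycle $(1)\Rightarrow(2)\Rightarrow(3)\Rightarrow(1)$, with $(3)\Rightarrow(1)$ proved by the same finite-subcover argument and $(1)\Rightarrow(2)$ by the same construction of a family of nonnegative Urysohn functions vanishing on whole neighborhoods of the bad point $y_0$, so that density of $e[X]$ together with $y_0\in\cl_Y(\coz(f))$ defeats any finite subfamily. The only differences are cosmetic (you index the Urysohn family by neighborhoods of $y_0$ rather than by points $z\neq y_0$, and you argue by contraposition rather than contradiction), so no further comparison is needed.
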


\begin{proof}
(1) $\Rightarrow$ (2). Suppose $f \in I_\alpha$. To show $\cl_Y(\coz(f)) \subseteq e[X]$, it suffices to assume $f \ge 0$ since
$\coz(f) = \coz(|f|)$. Suppose there is $y \in \cl_Y(\coz(f)) \setminus e[X]$. For each $z \in Y$ with $z \ne y$ there is an open
set $V_z$ of $Y$ with $y \in V_z$ and $z \notin \cl_Y(V_z)$. Therefore, there is $0 \le g_z \in C(Y)$ with $g_z = 0$ on $\cl_Y(V_z)$
and $g_z(z) = f(z) + 1$. It then follows that $\alpha(f) + 1 \le \bigvee \{ \alpha(g_z) \mid z \ne y\}$. Since $f \in I_\alpha$, there
are $z_1,\dots, z_n \in Y$ with $\alpha(f) \le \alpha(g_{z_1}) \vee \cdots \vee \alpha(g_{z_n}) = \alpha(g_{z_1} \vee \cdots \vee g_{z_n})$.
Now, $g_{z_1} \vee \cdots \vee g_{z_n}$ is zero on a neighborhood $V$ of $y$. Since $y \in \cl_Y(\coz(f))$, we have
$V \cap \coz(f) \ne \varnothing$. Therefore, since $e[X]$ is dense in $Y$, $V \cap \coz(f) \cap e[X] \ne \varnothing$. If $x \in X$
with $e(x) \in V \cap \coz(f)$, then $f(e(x)) > 0$ while $(g_{z_1} \vee \cdots \vee g_{z_n})(e(x)) = 0$. The obtained contradiction
proves that $\cl_Y(\coz(f)) \subseteq e[X]$.

(2) $\Rightarrow$ (3). Since $\cl_Y(\coz(f))$ is a closed subset of $Y$, it is compact. If it is contained in $e[X]$, then
$e^{-1}\cl_Y(\coz(f))$ is a compact subset of $X$ since $e$ is a homeomorphism from $X$ to $e[X]$.

(3) $\Rightarrow$ (1). It is sufficient to show that $|f| \in I_\alpha$, hence we assume that $f \ge 0$. Suppose that $F := e^{-1}\cl_Y(\coz(f))$ is a compact subset of $X$.
Let $\varepsilon > 0$ and $T$ be a subset of nonnegative elements of $C(Y)$ with $\alpha(f) + \varepsilon \le \bigvee \alpha[T]$. Then
\[
X = \bigcup \{ \alpha(g-f)^{-1}(\varepsilon/2, \infty) \mid g \in T\},
\]
so $F$ is covered by these open sets. Since $F$ is a compact subset of $X$, there are $g_1,\dots, g_n \in T$ with
$F \subseteq \alpha(g_1-f)^{-1}(\varepsilon/2, \infty) \cup \cdots \cup \alpha(g_n-f)^{-1}(\varepsilon/2, \infty)$.
As $g_1 \vee \cdots \vee g_n$ is a nonnegative function, if $x \notin F$, then $f(e(x)) = 0$, so
$\alpha(f)(x) \le \alpha(g_1\vee \cdots \vee g_n)(x)$. If $x \in F$, then $x \in \alpha(g_i-f)^{-1}(\varepsilon/2, \infty)$ for some $i$,
so $\alpha(g_i - f)(x) > \varepsilon/2 > 0$. This yields $\alpha(f)(x) \le \alpha(g_i)(x) \le  (\alpha(g_1)\vee \cdots \vee \alpha(g_n))(x)$. Thus,
$\alpha(f) \le \alpha(g_1) \vee \cdots \vee \alpha(g_n)$, and hence $f \in I_\alpha$.
\end{proof}

\begin{lemma} \label{lem: remainder}
Let $e : X \to Y$ be a compactification and $\alpha = e^\flat : C(Y) \to B(X)$ the corresponding basic extension. Then
$Z(I_\alpha) = \cl_Y(Y \setminus e[X])$.
\end{lemma}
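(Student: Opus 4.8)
The plan is to translate the desired set equality into a statement about cozero sets and the interior of $e[X]$, and then feed it through Lemma~\ref{lem: characterization of Ialpha}. Since
\[
Z(I_\alpha) = \bigcap \{ Z(f) \mid f \in I_\alpha \} = Y \setminus \bigcup \{ \coz(f) \mid f \in I_\alpha \},
\]
and since the complement of the interior is the closure of the complement, we have $\cl_Y(Y \setminus e[X]) = Y \setminus \int_Y(e[X])$. Therefore the claim $Z(I_\alpha) = \cl_Y(Y \setminus e[X])$ is equivalent to
\[
\bigcup \{ \coz(f) \mid f \in I_\alpha \} = \int_Y(e[X]),
\]
and I would prove this last equality by establishing its two inclusions.

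For the inclusion $\subseteq$, I would take $f \in I_\alpha$. By Lemma~\ref{lem: characterization of Ialpha}, $\cl_Y(\coz(f)) \subseteq e[X]$. As $\coz(f)$ is open and satisfies $\coz(f) \subseteq \cl_Y(\coz(f)) \subseteq e[X]$, it is contained in the largest open subset of $e[X]$, which is $\int_Y(e[X])$; taking the union over all $f \in I_\alpha$ yields the inclusion. This step is immediate.

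For the reverse inclusion $\supseteq$, I would fix $y \in \int_Y(e[X])$ and produce an $f \in I_\alpha$ with $y \in \coz(f)$. Choose an open $U$ with $y \in U \subseteq e[X]$. Because $Y$ is compact Hausdorff, hence regular, I would first shrink $U$ to an open $V$ with $y \in V \subseteq \cl_Y(V) \subseteq U$. Urysohn's lemma, applied in the normal space $Y$ to the disjoint closed sets $\{y\}$ and $Y \setminus V$, then gives a continuous $f : Y \to [0,1]$ with $f(y) = 1$ and $f = 0$ on $Y \setminus V$. Then $\coz(f) \subseteq V$, so $\cl_Y(\coz(f)) \subseteq \cl_Y(V) \subseteq U \subseteq e[X]$, and Lemma~\ref{lem: characterization of Ialpha} yields $f \in I_\alpha$. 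Since $f(y) = 1 \neq 0$, we have $y \in \coz(f)$, completing this inclusion.

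The only point that needs care—and the closest thing to an obstacle—is ensuring that $\cl_Y(\coz(f))$, and not merely $\coz(f)$, lands inside $e[X]$, since that closed-closure condition is precisely what Lemma~\ref{lem: characterization of Ialpha} requires. This is exactly why I shrink $U$ to a $V$ with $\cl_Y(V) \subseteq U$ before invoking Urysohn. Once this is arranged, both inclusions are routine, and the complement identity converts the resulting cozero equality back into the claimed identity $Z(I_\alpha) = \cl_Y(Y \setminus e[X])$.
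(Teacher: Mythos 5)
Your proof is correct and follows essentially the same route as the paper's: both directions hinge on Lemma~\ref{lem: characterization of Ialpha}, with the easy inclusion coming from $\cl_Y(\coz(f)) \subseteq e[X]$ and the harder one from a regularity-plus-Urysohn construction of a witness $f \in I_\alpha$ not vanishing at the given point. The only difference is cosmetic: you work with the complemented identity $\bigcup\{\coz(f) \mid f \in I_\alpha\} = \int_Y(e[X])$, while the paper argues directly with zero sets and closures.
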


\begin{proof}
Let $y \in Y \setminus e[X]$. If $f \in I_\alpha$, then $\coz(f) \subseteq \cl_Y(\coz(f)) \subseteq e[X]$ by
Lemma~\ref{lem: characterization of Ialpha}. Therefore, $f(y) = 0$, and so $y\in Z(f)$. Since $f\in I_\alpha$ was arbitrary,
we have $y\in Z(I_\alpha)$. Because this is true for every $y\in Y \setminus e[X]$, we see that $Y \setminus e[X] \subseteq Z(I_\alpha)$.
As $Z(I_\alpha)$ is closed in $Y$, we conclude that $\cl_Y(Y \setminus e[X]) \subseteq Z(I_\alpha)$.

For the reverse inclusion, suppose that $y \notin \cl_Y(Y \setminus e[X])$. Then $y \in V := Y \setminus \cl_Y(Y \setminus e[X]) = \int_Y(e[X])$,
an open subset of $Y$ contained in $e[X]$. Since $Y$ is regular, there is an open set $U$ of $Y$ with $y \in U$ and $\cl_Y(U) \subseteq V$.
Set $F = Y \setminus U$, a closed subset of $Y$. Since $y \notin F$, there is $f \in C(Y)$ with $0 \le f \le 1$, $f(y) = 1$, and
$f(F) = \{0\}$. Therefore, $\coz(f) \subseteq Y \setminus F = U$, so $\cl_Y(\coz(f)) \subseteq \cl_Y(U) \subseteq e[X]$. Thus, $f \in I_\alpha$
by Lemma~\ref{lem: characterization of Ialpha}. Because $f(y) = 1$, we have $y\notin Z(f)$, so $y \notin Z(I_\alpha)$.
\end{proof}

In the next theorem we characterize locally compact Hausdorff spaces in terms of basic extensions.

\begin{theorem} \label{thm: local compactness}
Let $e : X \to Y$ be a compactification and $\alpha = e^\flat : C(Y) \to B(X)$ the corresponding basic extension. Then $X$ is locally compact
iff $\alpha(b) = \bigvee \{ \alpha(a) \mid a \in I_\alpha, a \le b \}$ for all $0 \le b \in C(Y)$.
\end{theorem}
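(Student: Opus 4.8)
The plan is to translate the displayed join identity into a pointwise statement in $B(X)$ and then read it off from the geometric description of $I_\alpha$. Recall that joins of bounded-above families in $B(X)$ are computed pointwise as suprema, and that $\alpha(a)\le\alpha(b)$ whenever $a\le b$; moreover the family $\{\alpha(a)\mid a\in I_\alpha,\ a\le b\}$ is nonempty (it contains $0$, since $0\in I_\alpha$ and $0\le b$). Hence $\bigvee\{\alpha(a)\mid a\in I_\alpha,\ a\le b\}$ always exists and lies below $\alpha(b)$, and the identity holds precisely when, for every $x\in X$, $\sup\{a(e(x))\mid a\in I_\alpha,\ a\le b\}=b(e(x))$. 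Throughout I would use Lemma~\ref{lem: characterization of Ialpha}, which says $a\in I_\alpha$ iff $\cl_Y(\coz(a))\subseteq e[X]$, together with the standard fact that a dense subspace of a compact Hausdorff space is locally compact iff it is open in the space; thus the whole theorem reduces to relating the displayed identity to the openness of $e[X]$ in $Y$.

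For the forward direction, assume $X$ is locally compact, so $e[X]$ is open in $Y$. Fix $0\le b\in C(Y)$ and $x\in X$, and set $y=e(x)$. Using regularity of $Y$ (it is compact Hausdorff) and Urysohn's lemma, I would produce $u\in C(Y)$ with $0\le u\le 1$ and $u(y)=1$ that vanishes off an open set whose closure lies inside $e[X]$; then $\cl_Y(\coz(u))\subseteq e[X]$, so $u\in I_\alpha$ by Lemma~\ref{lem: characterization of Ialpha}. Because $I_\alpha$ is an $\ell$-ideal of $C(Y)$ (Lemma~\ref{lem: Ialpha}), the product $a:=bu$ again lies in $I_\alpha$, and since $0\le u\le 1$ and $b\ge 0$ it satisfies $0\le a\le b$ and $a(y)=b(y)$. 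Thus the pointwise supremum at $x$ attains $b(e(x))$, and as $x$ was arbitrary the identity $\alpha(b)=\bigvee\{\alpha(a)\mid a\in I_\alpha,\ a\le b\}$ follows.

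For the reverse direction it suffices to test the hypothesis on $b=1$. The resulting identity reads, pointwise, $\sup\{a(e(x))\mid a\in I_\alpha,\ a\le 1\}=1$ for each $x\in X$, so for every $x$ there is $a\in I_\alpha$ with $a(e(x))>\tfrac12$. Then $W:=a^{-1}(\tfrac12,\infty)$ is an open subset of $Y$ containing $e(x)$, and $W\subseteq\coz(a)\subseteq\cl_Y(\coz(a))\subseteq e[X]$ by Lemma~\ref{lem: characterization of Ialpha}. Hence every point of $e[X]$ is interior in $Y$, so $e[X]$ is open, and $X$ is locally compact as an open subspace of a compact Hausdorff space.

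I expect the only real subtlety to lie in the forward direction: one must convert local compactness into the openness of $e[X]$ and then manufacture an element of $I_\alpha$ lying below $b$ that matches $b$ at a prescribed point. The two ingredients making this work are the ideal property of $I_\alpha$ (which lets me pass from the Urysohn function $u$ to $bu$) and the cozero-closure characterization of $I_\alpha$ in Lemma~\ref{lem: characterization of Ialpha}; alternatively, the openness of $e[X]$ can be routed through Lemma~\ref{lem: remainder}, since $e[X]$ is open iff $Z(I_\alpha)=Y\setminus e[X]$. The reverse direction is comparatively routine once the join is interpreted pointwise.
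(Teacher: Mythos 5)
Your proof is correct, and it shares the paper's basic skeleton --- pointwise computation of the join in $B(X)$, a Urysohn-type construction, and Lemma~\ref{lem: characterization of Ialpha} as the bridge between $I_\alpha$ and the topology --- but the routing differs in both directions, and the lemma dependencies are genuinely different. In the forward direction, the paper never passes through openness of $e[X]$: it argues locally, taking a neighborhood $U$ of $x$ with $\cl_X(U)$ compact, pushing it to an open $V\subseteq Y$ with $\cl_Y(V)\subseteq e[X]$, and inserting a Urysohn function $a$ with $a(e(x))=b(e(x))$ vanishing off $V$; it then caps with $a\wedge b$ rather than multiplying, so it needs only the characterization lemma and not the ideal property of $I_\alpha$. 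You instead invoke the standard fact that a dense locally compact subspace of a Hausdorff space is open, and use Lemma~\ref{lem: Ialpha} to pass from the Urysohn peak $u$ to $bu\in I_\alpha$; this is a clean alternative, at the cost of importing one extra topological fact and one extra lemma (both of which are available, so there is no gap). In the converse direction your argument is more self-contained than the paper's: the paper shows $Z(I_\alpha)\subseteq Y\setminus e[X]$ and then quotes Lemma~\ref{lem: remainder} together with \cite[Cor.~3.3.11]{Eng89}, whereas you read off directly from $a(e(x))>1/2$ that $a^{-1}(1/2,\infty)$ is an open $Y$-neighborhood of $e(x)$ contained in $\cl_Y(\coz(a))\subseteq e[X]$, so that $e[X]$ is open and $X$ is locally compact as an open subspace of a compact Hausdorff space. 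Both converses test the hypothesis only at $b=1$; yours avoids the remainder lemma entirely, while the paper's reuses machinery it has already built and needs again later (e.g.\ in Theorem~\ref{thm: 1-point}).
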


\begin{proof}
Suppose that $X$ is locally compact and $0\le b \in C(Y)$. Let $x \in X$. We show there is $a \in I_\alpha$ with $a \le b$ and
$\alpha(b)(x) = \alpha(a)(x)$. This is clear if $\alpha(b)(x) = 0$, so assume $\alpha(b)(x) > 0$. Since $X$ is locally compact,
there is an open neighborhood $U$ of $x$ with $\cl_X(U)$ compact. Therefore, $\cl_Y(e[U]) \subseteq e[X]$. There is an open set
$V$ of $Y$ with $U = e^{-1}(V)$. Because $\cl_Y(V) = \cl_Y(e[U])$, we see that $\cl_Y(V) \subseteq e[X]$. Since
$e(x) \notin Y \setminus V$, there is $0 \le a \in C(Y)$ with $a = 0$ on $Y \setminus V$ and $a(e(x)) = b(e(x))$. By replacing
$a$ by $a\wedge b$, we may assume $a \le b$. By construction, $\coz(a) \subseteq V$, so $\cl_Y(\coz(a)) \subseteq \cl_Y(V) \subseteq e[X]$.
Therefore, $a \in I_\alpha$ by Lemma~\ref{lem: characterization of Ialpha}. Thus,
$\alpha(b) = \bigvee \{ \alpha(a) \mid a \in I_\alpha, a \le b \}$.

For the converse, to see that $X$ is locally compact, by \cite[Cor.~3.3.11]{Eng89}, it suffices to show that $Y \setminus e[X]$
is closed. By Lemma~\ref{lem: remainder}, it is enough to show that $Z(I_\alpha) \subseteq Y\setminus e[X]$. To see this, we have
$1 = \bigvee \{ \alpha(a) \mid a \in I_\alpha, a \le 1\}$. From this equality it follows that if $x \in X$, then there is
$a \in I_\alpha$ with $\alpha(a)(x) \ne 0$. Consequently, $e(x) \notin Z(a)$, and hence $e(x)\notin Z(I_\alpha)$.
\end{proof}

Theorem~\ref{thm: local compactness} motivates the following definition.

\begin{definition}
We call a basic extension $\alpha : A \to B$ \emph{locally compact} provided
\[
\alpha(b) = \bigvee \{ \alpha(a) \mid a \in I_\alpha, a \le b\} \textrm{ for all }0 \le b \in A.
\]
\end{definition}

\begin{remark}
The assumption that $0 \le b$ is essential. To see this, let $A=C(Y)$ and let $\alpha(-1) = \bigvee \{ \alpha(a) \mid a \in I_\alpha, a \le -1\}$.
Then each such $a$ is bounded away from 0, and hence is a unit, forcing $I_\alpha = A$.
\end{remark}

We next show that a basic extension $\alpha$ is locally compact iff so is $\widehat{\alpha}$. For this we require the following lemma.

\begin{lemma} \label{lem: alpha lc iff alpha hat lc}
Let $\alpha : A \to B$ be a basic extension. Then $\zeta_A^{-1}(I_{\widehat{\alpha}}) = I_\alpha$.
\end{lemma}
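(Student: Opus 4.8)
The plan is to reduce the claim to a comparison of the two $\alpha$-compactness conditions for the single element $\alpha(a)$, and then to exploit that $\widehat{A}$ is the uniform completion of $A$. Recall from the description of the reflector that $\widehat{\alpha} \circ \zeta_A = \alpha$, so for $a \in A$ we have $\alpha(a) = \widehat{\alpha}(\zeta_A(a))$. Consequently $a \in I_\alpha$ means precisely that $\alpha(a)$ is $\alpha$-compact, while $\zeta_A(a) \in I_{\widehat{\alpha}}$ means precisely that $\alpha(a)$ is $\widehat{\alpha}$-compact. The only difference between the two notions is the class of test families: $\alpha$-compactness quantifies over sets $T$ of nonnegative elements of $A$, whereas $\widehat{\alpha}$-compactness quantifies over sets of nonnegative elements of $\widehat{A}$. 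Thus the whole statement amounts to showing that $\alpha(a)$ is $\alpha$-compact iff it is $\widehat{\alpha}$-compact.

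The inclusion $\zeta_A^{-1}(I_{\widehat{\alpha}}) \subseteq I_\alpha$ is the easy specialization direction. Assume $\alpha(a)$ is $\widehat{\alpha}$-compact, and let $T$ be a set of nonnegative elements of $A$ with $|\alpha(a)| + \varepsilon \le \bigvee \alpha[T]$. Then $\zeta_A[T]$ is a set of nonnegative elements of $\widehat{A}$, and since $\alpha(t) = \widehat{\alpha}(\zeta_A(t))$ we have $\bigvee \alpha[T] = \bigvee \widehat{\alpha}[\zeta_A[T]]$. Applying $\widehat{\alpha}$-compactness produces a finite subfamily of $\zeta_A[T]$, which I pull back along $\zeta_A$ to a finite $T' \subseteq T$ with $|\alpha(a)| \le \bigvee \alpha[T']$; hence $a \in I_\alpha$.

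The reverse inclusion $I_\alpha \subseteq \zeta_A^{-1}(I_{\widehat{\alpha}})$ is the substantive direction and the main obstacle. Assume $\alpha(a)$ is $\alpha$-compact and let $T$ be a set of nonnegative elements of $\widehat{A}$ with $|\alpha(a)| + \varepsilon \le \bigvee \widehat{\alpha}[T]$. The key step is to replace each $c \in T$ by nonnegative elements of $A$ without changing the join. For $c \in T$ (so $c \ge 0$), Lemma~\ref{lem: alpha versus alpha hat}(1) supplies an increasing sequence $\{a_n\}$ in $A$ with $\zeta_A(a_n) \le c$ and $\zeta_A(a_n) \to c$; since $c \ge 0$, replacing $a_n$ by $a_n \vee 0$ keeps the sequence increasing, nonnegative, and still convergent to $c$ from below, and by the argument of Lemma~\ref{lem: alpha versus alpha hat}(2) we get $\widehat{\alpha}(c) = \bigvee_n \alpha(a_n \vee 0)$. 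Collecting all of these approximants, over all $c \in T$, into a single set $S$ of nonnegative elements of $A$ yields $\bigvee \alpha[S] = \bigvee \widehat{\alpha}[T]$, so $|\alpha(a)| + \varepsilon \le \bigvee \alpha[S]$. Now $\alpha$-compactness of $\alpha(a)$ gives a finite $S' \subseteq S$ with $|\alpha(a)| \le \bigvee \alpha[S']$. Since $S'$ is finite, it draws from the approximating sets of only finitely many $c \in T$, and as $\alpha(a_n \vee 0) \le \widehat{\alpha}(c)$ for each such $c$, taking $T'$ to be this finite collection gives $|\alpha(a)| \le \bigvee \widehat{\alpha}[T']$. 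Hence $\alpha(a)$ is $\widehat{\alpha}$-compact, i.e.\ $\zeta_A(a) \in I_{\widehat{\alpha}}$. The care required in arranging nonnegativity of the approximants $a_n \vee 0$ and in verifying that their $\alpha$-images still join up to $\widehat{\alpha}(c)$ is where the work lies; everything else is bookkeeping.
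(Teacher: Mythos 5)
Your proof is correct and follows essentially the same route as the paper's: the easy direction pushes a test family $T\subseteq A$ forward along $\zeta_A$, and the substantive direction replaces each nonnegative $c\in\widehat{A}$ by an increasing sequence of nonnegative approximants from $A$ (via Lemma~\ref{lem: alpha versus alpha hat}), pools them into one set $S$, applies $\alpha$-compactness, and traces the finite subfamily back to finitely many elements of $T$ using $\alpha(s)\le\widehat{\alpha}(c)$. Your explicit check that the approximants can be taken nonnegative by passing to $a_n\vee 0$ is a detail the paper glosses over with ``by the proof of Lemma~\ref{lem: alpha versus alpha hat},'' so it is a welcome addition rather than a deviation.
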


\begin{proof}
Let $b \in I_\alpha$. Suppose for $\varepsilon > 0$ and a set $T$ of nonnegative elements of $\widehat{A}$ we have
$|\widehat{\alpha}(\zeta_A(b))| + \varepsilon \le \bigvee \widehat{\alpha}[T]$. By the proof of Lemma~\ref{lem: alpha versus alpha hat}, for each $t \in T$
we may find an increasing sequence $\{a_n\}$ of nonnegative elements of $A$ such that $t = \bigvee_n \zeta_A(a_n)$. If $S$ is the set
of all these elements, as $t$ ranges over $T$, from $\widehat{\alpha} \circ \zeta_A = \alpha$, we have
$|\alpha(b)| + \varepsilon \le \bigvee \alpha[S]$. Since $b\in I_\alpha$, there is a finite $S' \subseteq S$ with
$|\alpha(b)| \le \bigvee \alpha[S']$. For each $s \in S'$ there is $t \in T$ with $\alpha(s) \le \widehat{\alpha}(t)$. Thus, there is a finite subset
$T' \subseteq T$ with $\bigvee\alpha[S'] \le \bigvee \widehat{\alpha}[T']$, and so
$|\widehat{\alpha}(\zeta_A(b))| \le \bigvee \widehat{\alpha}[T']$. This shows $\zeta_A(b) \in I_{\widehat{\alpha}}$.

Conversely, suppose that $b \in A$ with $\zeta_A(b) \in I_{\widehat{\alpha}}$. Let $\varepsilon > 0$ and $T$ be a set of nonnegative
elements of $A$ with $|\alpha(b)| + \varepsilon \le \bigvee \alpha[T]$. Then
$|\widehat{\alpha}(\zeta_A(b))| + \varepsilon \le \bigvee \widehat{\alpha}[\zeta_A[T]]$.
Since $\zeta_A(b) \in I_{\widehat{\alpha}}$, there is a finite subset $T'$ of $T$
with $|\widehat{\alpha}(\zeta_A(b))| \le \bigvee \widehat{\alpha}[\zeta_A[T']]$, and so $|\alpha(b)| \le \bigvee \alpha[T']$.
Thus, $b \in I_\alpha$.
\end{proof}

\begin{lemma} \label{lem: alpha lc iff alpha hat lc}
A basic extension $\alpha:A\to B$ is locally compact iff $\widehat{\alpha}:\widehat{A}\to B$ is locally compact.
\end{lemma}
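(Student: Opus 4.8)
The plan is to prove both inequalities in the defining equation of local compactness directly, using three ingredients: the relation $\widehat{\alpha}\circ\zeta_A=\alpha$ coming from the reflector, the approximation of elements of $\widehat{A}$ by monotone sequences from $A$ (Lemma~\ref{lem: alpha versus alpha hat}(1)--(2)), and the identity $\zeta_A^{-1}(I_{\widehat{\alpha}})=I_\alpha$ from the preceding lemma, i.e.\ $a\in I_\alpha$ iff $\zeta_A(a)\in I_{\widehat{\alpha}}$. In both directions the inequality $\bigvee\{\cdots\}\le$ (the relevant image of $b$, resp.\ $c$) is immediate, since $\alpha$ and $\widehat{\alpha}$ are order preserving so the relevant element is an upper bound of the set in question, and the joins exist because $B$ is Dedekind complete. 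Thus the real work is the reverse inequality in each direction.

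For the forward direction, assume $\alpha$ is locally compact and let $0\le c\in\widehat{A}$. First I would use Lemma~\ref{lem: alpha versus alpha hat}(1) to choose an increasing sequence $\{c_n\}\subseteq A$ with $\zeta_A(c_n)\le c$ and $c=\lim_n\zeta_A(c_n)$, and then replace each $c_n$ by $c_n\vee 0$ so that $c_n\ge 0$; since $c\ge 0$ this preserves monotonicity, the bound $\zeta_A(c_n)\le c$, and the limit. Then $\widehat{\alpha}(c)=\bigvee_n\alpha(c_n)$ by Lemma~\ref{lem: alpha versus alpha hat}(2). Applying local compactness of $\alpha$ to each nonnegative $c_n$ gives $\alpha(c_n)=\bigvee\{\alpha(a)\mid a\in I_\alpha,\ a\le c_n\}$, and for each such $a$ we have $\zeta_A(a)\le\zeta_A(c_n)\le c$, $\zeta_A(a)\in I_{\widehat{\alpha}}$ by the preceding lemma, and $\alpha(a)=\widehat{\alpha}(\zeta_A(a))$. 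Hence every $\alpha(c_n)$, and therefore $\widehat{\alpha}(c)$, is dominated by $\bigvee\{\widehat{\alpha}(a')\mid a'\in I_{\widehat{\alpha}},\ a'\le c\}$, which is the desired inequality.

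For the converse, assume $\widehat{\alpha}$ is locally compact and let $0\le b\in A$. Since $\zeta_A(b)\ge 0$, local compactness of $\widehat{\alpha}$ gives $\alpha(b)=\widehat{\alpha}(\zeta_A(b))=\bigvee\{\widehat{\alpha}(c)\mid c\in I_{\widehat{\alpha}},\ c\le\zeta_A(b)\}$; moreover, as $I_{\widehat{\alpha}}$ is an $\ell$-ideal by Lemma~\ref{lem: Ialpha} applied to $\widehat{\alpha}$, I may restrict to those $c$ with $0\le c\le\zeta_A(b)$, because $c\vee 0$ again lies in $I_{\widehat{\alpha}}$, satisfies $c\vee 0\le\zeta_A(b)$, and $\widehat{\alpha}(c)\le\widehat{\alpha}(c\vee 0)$. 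The main obstacle is that these witnesses $c$ live in $\widehat{A}$ rather than in $\zeta_A[A]$, so I must descend them to $A$ while simultaneously preserving membership in the compactness ideal and the bound by $b$. I would handle this by applying Lemma~\ref{lem: alpha versus alpha hat}(1) to each nonnegative $c$: pick an increasing $\{a_m\}\subseteq A$ with $\zeta_A(a_m)\le c$ and $c=\lim_m\zeta_A(a_m)$, then truncate to assume $a_m\ge 0$. Now $0\le\zeta_A(a_m)\le c\le\zeta_A(b)$ forces $a_m\le b$, while $\zeta_A(a_m)\in I_{\widehat{\alpha}}$ (again using that $I_{\widehat{\alpha}}$ is an $\ell$-ideal), whence $a_m\in I_\alpha$ by the preceding lemma. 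Since $\widehat{\alpha}(c)=\bigvee_m\alpha(a_m)$ by Lemma~\ref{lem: alpha versus alpha hat}(2), each witness $\widehat{\alpha}(c)$ is dominated by $\bigvee\{\alpha(a)\mid a\in I_\alpha,\ a\le b\}$, and taking the join over all such $c$ yields $\alpha(b)\le\bigvee\{\alpha(a)\mid a\in I_\alpha,\ a\le b\}$. Combined with the trivial inequality this gives equality and completes the argument.
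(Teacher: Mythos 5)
Your proof is correct and takes essentially the same route as the paper's: both directions combine the monotone approximation of elements of $\widehat{A}$ by increasing sequences of nonnegative elements of $A$ (Lemma~\ref{lem: alpha versus alpha hat}), the identity $\zeta_A^{-1}(I_{\widehat{\alpha}})=I_\alpha$ from the preceding lemma, and the fact that $\zeta_A$ is an order embedding. If anything, you are more explicit than the paper in justifying the reduction to nonnegative elements (truncating the approximating sequences by $\vee\,0$ and restricting to nonnegative witnesses $c\in I_{\widehat{\alpha}}$ via the $\ell$-ideal property), steps the paper leaves implicit.
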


\begin{proof}
Let $\alpha$ be locally compact and let $0 \le b \in \widehat{A}$.
By the proof of Lemma~\ref{lem: alpha versus alpha hat}, we may find an increasing sequence $\{a_n\}$ of nonnegative elements of $A$ such that
$\widehat{\alpha}(b) = \bigvee_n \alpha(a_n)$.
Since $\alpha$ is locally compact, $\alpha(a_n) = \bigvee \{ \alpha(a) \mid a \in I_\alpha, a \le a_n\}$. Therefore,
\begin{align*}
\widehat{\alpha}(b) &= \bigvee \{ \alpha(a) \mid a \in I_\alpha, a \le a_n \textrm{ for some } n \} \\
&= \bigvee \{ \widehat{\alpha}(\zeta_A(a)) \mid a \in I_\alpha, a \le a_n \textrm{ for some } n \}.
\end{align*}
By Lemma~\ref{lem: alpha lc iff alpha hat lc}, $\widehat{\alpha}(b) = \bigvee\{ \widehat{\alpha}(c) \mid c \in I_{\widehat{\alpha}}, c \le b \}$.
Thus, $\widehat{\alpha}$ is locally compact.

Conversely, suppose that $\widehat{\alpha}$ is locally compact. Let $0 \le b \in A$. Then
\[
\widehat{\alpha}(\zeta_A(b)) = \bigvee \{ \widehat{\alpha}(c) \mid c \in I_{\widehat{\alpha}}, c \le \zeta_A(b) \}.
\]
By Lemma~\ref{lem: alpha versus alpha hat}(2), for each $c \in I_{\widehat{\alpha}}$ with $c \le \zeta_A(b)$, we may write
$\widehat{\alpha}(c) = \bigvee_n \alpha(a_n)$ for some $0 \le a_n \in A$ with $\zeta_A(a_n) \le c$.
By Lemma~\ref{lem: alpha lc iff alpha hat lc}, each $a_n$ belongs to $I_\alpha$. If $\zeta_A(a) \le c \le \zeta_A(b)$,
then $a \le b$ since $\zeta_A$ is an order embedding. Therefore,
\[
\alpha(b) = \widehat{\alpha}(\zeta_A(b)) = \bigvee \{ \alpha(a) \mid a \in I_\alpha, a \le b \}.
\]
Thus, $\alpha$ is locally compact.
\end{proof}

As a consequence, we obtain the following theorem.

\begin{theorem}
Let $\alpha : A \to B$ be a basic extension. Then $X_B$ is locally compact iff $\alpha$ is locally compact.
\end{theorem}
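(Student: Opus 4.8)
The plan is to reduce the statement to the uniformly complete case and then read it off from Theorem~\ref{thm: local compactness} through the dual equivalence of Theorem~\ref{duality cptf}. By Lemma~\ref{lem: alpha lc iff alpha hat lc}, $\alpha$ is locally compact iff its reflection $\widehat\alpha : \widehat A \to B$ is locally compact, and $\widehat A \in \ubal$; so it suffices to prove that $X_B$ is locally compact iff $\widehat\alpha$ is locally compact, i.e. we may work with a basic extension whose domain is uniformly complete.

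Before invoking the duality I would verify that the topology on $X_B$ does not depend on whether we compute it from $\alpha$ or from $\widehat\alpha$. The reflector square shows that $(\zeta_A, \mathrm{id}_B)$ is a morphism $\alpha \to \widehat\alpha$ in $\basic$ (it commutes, and $\mathrm{id}_B$ is normal); applying $\sf C$ produces a commuting square of compactifications whose horizontal arrows are $\widehat\alpha_\flat$ and $\alpha_\flat$, whose left arrow is $\mathrm{id}_{X_B}$, and whose right arrow is $(\zeta_A)_* : Y_{\widehat A} \to Y_A$. Since $\zeta_A$ exhibits $\widehat A = C(Y_A)$ as the uniform completion of $A$, the map $(\zeta_A)_*$ is a homeomorphism, and from $\alpha_\flat = (\zeta_A)_* \circ \widehat\alpha_\flat$ it follows that $\alpha_\flat$ and $\widehat\alpha_\flat$ induce the same least topology on $X_B$, so $\tau_\alpha = \tau_{\widehat\alpha}$. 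Hence local compactness of $X_B$ is unambiguous.

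Now, since $\widehat A \in \ubal$, Theorem~\ref{duality cptf} gives that $\widehat\alpha$ is isomorphic in $\ubasic$ to $e^\flat : C(Y_{\widehat A}) \to B(X_B)$, where $e = \widehat\alpha_\flat : X_B \to Y_{\widehat A}$ is the associated compactification. I would then observe that being locally compact is an isomorphism invariant of basic extensions: a normal isomorphism carries $I_{\widehat\alpha}$ onto the ideal of compact elements of $e^\flat$ (because $\alpha$-compactness is phrased entirely in terms of $\mathbb R$, the order, and joins, all preserved by a normal $\ell$-algebra isomorphism) and carries the defining join equation of local compactness to the corresponding one. Thus $\widehat\alpha$ is locally compact iff $e^\flat$ is.

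Finally I would apply Theorem~\ref{thm: local compactness} to the compactification $e : X_B \to Y_{\widehat A}$, which states precisely that $e^\flat$ is locally compact iff $X_B$ is locally compact. Chaining the equivalences — $X_B$ locally compact iff $e^\flat$ locally compact iff $\widehat\alpha$ locally compact iff $\alpha$ locally compact — completes the proof. The step I expect to need the most care is the topological bookkeeping of the second paragraph: one must make sure that the space $X_B$ named in the statement carries exactly the topology $\tau_\alpha = \tau_{\widehat\alpha}$ that the compactification $e = \widehat\alpha_\flat$ induces, so that Theorem~\ref{thm: local compactness} applies verbatim. Once that identification is secured, the remainder is a formal assembly of Lemma~\ref{lem: alpha lc iff alpha hat lc}, Theorem~\ref{duality cptf}, and Theorem~\ref{thm: local compactness}.
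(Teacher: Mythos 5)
Your proposal is correct and takes essentially the same route as the paper: reduce to the reflection $\widehat\alpha$ via Lemma~\ref{lem: alpha lc iff alpha hat lc}, then use the dual equivalence to identify $\widehat\alpha$ with $e^\flat$ for $e=\widehat\alpha_\flat : X_B \to Y_{\widehat A}$ and apply Theorem~\ref{thm: local compactness}. The additional bookkeeping you carry out --- that $\tau_\alpha = \tau_{\widehat\alpha}$ (i.e.\ $\alpha$ and $\widehat\alpha$ are compatible) and that local compactness is invariant under isomorphism of basic extensions --- only makes explicit what the paper's terse proof leaves implicit.
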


\begin{proof}
By 
Theorems~\ref{CR duality} and~\ref{thm: local compactness}, $X_B$ is locally compact iff $\widehat{\alpha}$ is locally compact.
By Lemma~\ref{lem: alpha lc iff alpha hat lc}, $\alpha$ is locally compact iff $\widehat{\alpha}$ is locally compact. The result
follows.
\end{proof}

It is worth pointing out that not every locally compact basic extension is maximal, as the next example shows.

\begin{example}
Consider the compactification $e : X \to Y$ of Example~\ref{ex: (T) weaker than (N)}. Clearly $X$ is locally compact, hence by
Theorem~\ref{thm: local compactness}, $e^\flat : C(Y ) \to B(X)$ is locally compact. Since $e$ is not isomorphic to the
Stone-\v{C}ech compactification of $X$, the basic extension $e^\flat$ is not maximal.
\end{example}

\begin{definition}
\begin{enumerate}
\item[]
\item Let $\lcbasic$ be the full subcategory of $\mbasic$ consisting of locally compact basic extensions.
\item Let $\LKHaus$ be the full subcategory of $\creg$ consisting of locally compact spaces.
\end{enumerate}
\end{definition}

The following is an immediate consequence of 
Theorems~\ref{CR duality} and~\ref{thm: local compactness}.

\begin{theorem}
There is a dual equivalence between $\LKHaus$ and $\lcbasic$.
\end{theorem}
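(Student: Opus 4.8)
The plan is to assemble the final duality from the pieces already established, treating it as a restriction of the completely regular duality of Theorem~\ref{CR duality}. First I would recall that Theorem~\ref{CR duality} gives a dual equivalence between $\creg$ and $\mbasic$, realized by the functors inherited from $\sf E$ and $\sf C$ (concretely, sending $X$ to $s^\flat : C(\beta X) \to B(X)$ and a maximal extension $\alpha : A \to B$ to the space $X_B$). To obtain the desired equivalence between $\LKHaus$ and $\lcbasic$, it suffices to show that this equivalence matches up the full subcategory $\LKHaus$ of $\creg$ with the full subcategory $\lcbasic$ of $\mbasic$ objectwise; since both subcategories are \emph{full}, no separate verification on morphisms is needed once the objects correspond.

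The crux is therefore the objectwise correspondence, and this is exactly what Theorem~\ref{thm: local compactness} provides. Given $X \in \LKHaus$, its image under the equivalence is the maximal extension $s^\flat : C(\beta X) \to B(X)$ coming from the Stone-\v{C}ech compactification $s : X \to \beta X$. By Theorem~\ref{thm: local compactness} applied to the compactification $e = s$, the space $X$ is locally compact iff $s^\flat$ satisfies the defining equation of a locally compact basic extension, namely $s^\flat(b) = \bigvee\{ s^\flat(a) \mid a \in I_{s^\flat},\ a \le b\}$ for all $0 \le b \in C(\beta X)$. Hence $X$ locally compact forces $s^\flat \in \lcbasic$. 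Conversely, if $\alpha : A \to B$ is a maximal locally compact extension, then by maximality $\alpha$ is isomorphic to $s^\flat : C(\beta X_B) \to B(X_B)$ (Proposition~7.4 of \cite{BMO18d}, as recorded just before Theorem~\ref{CR duality}), and applying Theorem~\ref{thm: local compactness} once more shows $X_B$ is locally compact, so the space assigned to $\alpha$ lies in $\LKHaus$.

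The only mild subtlety is to confirm that the locally compactness condition is genuinely preserved under the isomorphism of basic extensions that maximality supplies, i.e.\ that the definition is invariant under isomorphism in $\mbasic$. This is routine: an isomorphism of basic extensions carries $I_\alpha$ onto the ideal of compact elements of the target and preserves all joins and order, so the defining equation transfers verbatim. With this in hand, the restriction of the dual equivalence of Theorem~\ref{CR duality} to $\LKHaus$ lands exactly in $\lcbasic$ and conversely, giving the stated dual equivalence. I expect the main (and essentially the only) real content to have been discharged already in Theorem~\ref{thm: local compactness}; the present theorem is the bookkeeping step that packages that equivalence of conditions as an equivalence of categories, so the proof should be short.
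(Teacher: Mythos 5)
Your proposal is correct and follows essentially the same route as the paper, which derives this theorem as an immediate consequence of Theorem~\ref{CR duality} and Theorem~\ref{thm: local compactness}; you have simply made explicit the bookkeeping (fullness of both subcategories, maximal extensions being isomorphic to $s^\flat : C(\beta X_B) \to B(X_B)$, and invariance of the locally compact condition under isomorphism of basic extensions) that the paper leaves as "immediate."
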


We conclude the paper by characterizing one-point compactifications of locally compact spaces by means of minimal extensions.
For this we recall from the preliminaries that two basic extensions $\alpha : A \to B$ and $\gamma : C \to B$
are compatible if the topologies $\tau_\alpha$ and $\tau_\gamma$ on $X_B$ are equal. It is easy to see that
a basic extension $\alpha : A \to B$ is compatible with $\widehat{\alpha} : \widehat{A} \to B$.

\begin{definition}
We say that a basic extension $\alpha : A \to B$ is \emph{minimal} provided for every compatible basic extension $\gamma : C \to B$ with
$C \in \ubal$, there is a morphism $\delta : A \to C$ in $\bal$ such that $\gamma \circ \delta = \alpha$.
\[
\begin{tikzcd}
A \ar[rd, "\delta"'] \ar[rr, "\alpha"] && B  \\
& C   \ar[ru, "\gamma"'] &
\end{tikzcd}
\]
\end{definition}

Let $A \in \bal$ and $I$ an $\ell$-ideal of $A$. In analogy with the familiar notion of the Jacobson radical, we define the \emph{Jacobson $\ell$-radical} $J_\ell(I)$ of $I$ as the intersection of the maximal
$\ell$-ideals containing $I$; that is,
\[
J_\ell(I) = \bigcap \{ M \in Y_A \mid I \subseteq M\}.
\]
It is easy to see that $J_\ell(I)$ is an $\ell$-ideal, that $I \subseteq J_\ell(I)$, and that $Z_\ell(I) = Z_\ell(J_\ell(I))$.
Consequently, $J_\ell(I)$ is a maximal $\ell$-ideal iff $Z_\ell(I)$ is a singleton. If $A \in \ubal$, then it is known (see, e.g., \cite[Prop.~4.1]{BMO13a}) that $Y_A$ is the set of maximal ideals of $A$, and hence $J_\ell(I)$ is the Jacobson radical of $I$.

\begin{remark}
In general it is not true that $I_\alpha = J_\ell(I_\alpha)$. To see this, consider the compactification $e:X\to Y$ of
Example~\ref{ex: (T) weaker than (N)}. Define $f \in C(Y)$ by $f(n) = 1/n$ and $f(\infty) = 0$. Let $\alpha = e^\flat : C(Y) \to B(X)$.
By Lemma~\ref{lem: remainder}, $Z(I_\alpha) = \{\infty\}$ . Since $\xi_Y : Y \to Y_{C(Y)}$ is a homeomorphism, we have $Z_\ell(I_\alpha) = \{M_\infty\}$. Therefore, $J_\ell(I_\alpha) = M_\infty$, and so $f\in J_\ell(I_\alpha)$.
However, since $\cl_Y(\coz(f)) = Y\not\subseteq e[X]$,
we have that $f \notin I_\alpha$ by Lemma~\ref{lem: characterization of Ialpha}.
\end{remark}

\begin{theorem} \label{thm: 1-point}
Let $X$ be a non-compact completely regular space, $e : X \to Y$ a compactification, and $\alpha = e^\flat : C(Y ) \to B(X)$ the corresponding
basic extension. The following are equivalent.
\begin{enumerate}
\item $\alpha$ is minimal.
\item $X$ is locally compact and $e : X \to Y$ is equivalent to the one-point compactification of $X$.
\item $\alpha$ is locally compact and $J_\ell(I_\alpha)$ is a maximal $\ell$-ideal.
\end{enumerate}
\end{theorem}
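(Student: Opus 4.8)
My plan is to establish the two equivalences (1) $\Leftrightarrow$ (2) and (2) $\Leftrightarrow$ (3) separately. Minimality is a universal property, so I would handle (1) $\Leftrightarrow$ (2) through the dual equivalence of Theorem~\ref{duality cptf}, whereas (2) $\Leftrightarrow$ (3) is a direct translation using the ideal $I_\alpha$ and the remainder $Y \setminus e[X]$.

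For (1) $\Leftrightarrow$ (2), note first that $A = C(Y) \in \ubal$, so $\alpha = e^\flat$ lies in $\ubasic$ and ${\sf C}(\alpha) = \alpha_\flat$ is equivalent to $e : X \to Y$. The heart of the argument is to recast the factorization in the definition of minimality as the order on compactifications. Any compatible extension $\gamma : C \to B$ with $C \in \ubal$ is, by Theorem~\ref{duality cptf}, isomorphic to $(\gamma_\flat)^\flat$, and compatibility ($\tau_\gamma = \tau_\alpha$) forces $(X_B,\tau_\gamma)$ to be homeomorphic to $X$, so $\gamma_\flat$ is a compactification $e' : X \to Y'$ of $X$, where $Y' = Y_C$ and $C \cong C(Y')$; conversely every compactification $e'$ of $X$ gives such a $\gamma = (e')^\flat$ with $C(Y') \in \ubal$. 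Since $A$ and $C$ are uniformly complete, Gelfand--Naimark--Stone duality identifies morphisms $\delta : A \to C$ with continuous maps $g : Y' \to Y$ via $\delta = g^\ast$, and a short computation shows that $(e')^\flat \circ g^\ast = e^\flat$ holds exactly when $g \circ e' = e$. Thus a $\delta$ with $\gamma \circ \delta = \alpha$ exists iff $e \le e'$, whence $\alpha$ is minimal iff $e \le e'$ for every compactification $e'$ of $X$, i.e.\ iff $e$ is the smallest compactification of $X$. Because $X$ embeds densely in the compact Hausdorff space $Y$, it is Tychonoff; and a non-compact Tychonoff space has a smallest compactification iff it is locally compact, in which case that smallest compactification is the one-point compactification (see, e.g., \cite[Sec.~3.5]{Eng89}). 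This yields (1) $\Leftrightarrow$ (2).

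For (2) $\Leftrightarrow$ (3) I would match the clauses of the two conditions. The local-compactness clauses coincide by Theorem~\ref{thm: local compactness}, which gives that $X$ is locally compact iff $\alpha$ is locally compact. Assuming local compactness, it remains to see that $e$ is equivalent to the one-point compactification iff $J_\ell(I_\alpha)$ is a maximal $\ell$-ideal. Here $J_\ell(I_\alpha)$ is maximal iff $Z_\ell(I_\alpha)$ is a singleton; by Remark~\ref{rem: Z versus Zl} the homeomorphism $\xi_Y$ sends $Z(I_\alpha)$ onto $Z_\ell(I_\alpha)$, so this happens iff $Z(I_\alpha)$ is a singleton; and by Lemma~\ref{lem: remainder}, $Z(I_\alpha) = \cl_Y(Y \setminus e[X])$. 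Since $X$ is non-compact, $Y \setminus e[X]$ is nonempty, and as $Y$ is Hausdorff its closure is a singleton iff $Y \setminus e[X]$ is itself a single point, which is exactly the condition that $e$ be the one-point compactification. Chaining these equivalences gives (2) $\Leftrightarrow$ (3).

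The step I expect to be the main obstacle is the order translation in (1) $\Leftrightarrow$ (2): verifying carefully that compatible extensions with uniformly complete domain correspond, under ${\sf E}$ and ${\sf C}$, precisely to compactifications of $X$, and that the factorization defining minimality matches the relation $e \le e'$. This is the order-dual of the maximal case in \cite[Prop.~7.4]{BMO18d}, and the care lies in tracking the identifications $X_B \cong X$ and $C \cong C(Y')$ through the functors and in confirming that every $\bal$-morphism between the uniformly complete algebras $C(Y)$ and $C$ arises from a continuous map of their Yosida spaces.
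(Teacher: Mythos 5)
Your proposal is correct and takes essentially the same route as the paper: (1)$\Leftrightarrow$(2) is reduced to ``$\alpha$ is minimal iff $e$ is the least compactification of $X$'' and then to Engelking's characterization of least/one-point compactifications of locally compact spaces, while (2)$\Leftrightarrow$(3) combines Theorem~\ref{thm: local compactness} with Lemma~\ref{lem: remainder} and Remark~\ref{rem: Z versus Zl} exactly as in the paper. The identification you flag as the main obstacle---that a compatible extension $\gamma : C \to B(X)$ with $C \in \ubal$ coincides, after composing with $\zeta_C^{-1}$, with $(e')^\flat$ for the compactification $e' = \gamma_\flat \circ \eta_X$---is precisely what the paper isolates and proves as Claim~\ref{claim}, via a pointwise computation and density of $X_{B(X)}$ in $Y_{B(X)}$, so your plan fills in to a complete proof.
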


\begin{proof}
(1) $\Rightarrow$ (2). We show that $e$ is the least compactification of $X$. Let $e' : X \to Y'$ be an arbitrary compactification
of $X$. Then
$\alpha'=(e')^\flat : C(Y') \to B(X)$ is compatible with $\alpha$.
Since $\alpha$ is minimal, there is $\delta : C(Y) \to C(Y')$ with $\alpha' \circ \delta = \alpha$. By Gelfand-Naimark-Stone duality,
there is a continuous map $\sigma : Y' \to Y$ with $\delta = \sigma^*$, and $\sigma \circ e' = e$ since $\alpha' \circ \delta = \alpha$.
Thus, $e$ is the least compactification of $X$. This, by \cite[Thm.~3.5.12]{Eng89}, yields that $X$ is locally compact and $e$ is equivalent
to the one-point compactification of $X$.

(2) $\Rightarrow$ (1). Suppose that $X$ is locally compact and $e$ is the one-point compactification of $X$. Let $C \in \ubal$ and
$\gamma : C \to B(X)$ be a basic extension compatible with $\alpha$. Then $\gamma_\flat\circ \eta_X : X \to Y_C$ is a compactification.
By \cite[Thm.~3.5.11]{Eng89}, $e$ is the least compactification of $X$, so there is a continuous map $\sigma : Y_C \to Y$ with
$\sigma \circ \gamma_b \circ \eta_X = e$.
\[
\begin{tikzcd}[column sep=5pc]
X_{B(X)} \arrow[r, "\gamma_\flat"] & Y_C \arrow[d, "\sigma"] \\
X \arrow[r, "e"'] \arrow[u, "\eta_X"] & Y
\end{tikzcd}
\]
Therefore, $\sigma^* : C(Y) \to C(Y_C)$ is a morphism in $\bal$. 
\begin{claim} \label{claim}
$\alpha = \widehat{\gamma} \circ \sigma^*$.
\end{claim}

\begin{proofclaim}
By definition, $\widehat{\gamma} = \zeta_{B(X)}^{-1} \circ (\gamma_*)^*$, so
\[
\widehat{\gamma} \circ \sigma^* = \zeta_{B(X)}^{-1} \circ (\gamma_*)^* \circ \sigma^* = \zeta_{B(X)}^{-1} \circ (\sigma \circ \gamma_*)^*.
\]
Thus, it is sufficient to show that $\zeta_{B(X)} \circ e^\flat = (\sigma\circ \gamma_*)^*$. 
\[
\begin{tikzcd}[column sep=5pc]
C(Y) \arrow[r, "e^\flat"] \arrow[d, "\sigma^*"'] \arrow[dr, "(\sigma\circ \gamma_*)^*"] & B(X) \arrow[d, "\zeta_{B(X)}"] \\
C(Y_C) \arrow[r, "(\gamma_*)^*"'] & C(Y_{B(X)})
\end{tikzcd}
\]
Let $f \in C(Y)$. For $x \in X$ we have $\eta_X(x) \in X_{B(X)} \subseteq Y_{B(X)}$. Since $\gamma_\flat = \gamma_*|_{X_{B(X)}}$ and $\sigma \circ \gamma_b \circ \eta_X = e$, we have
\begin{align*}
(\sigma\circ \gamma_*)^*(f)(\eta_X(x)) &= (f\circ\sigma\circ\gamma_*)(\eta_X(x)) =  (f\circ\sigma\circ\gamma_\flat)(\eta_X(x)) \\
&= f\circ (\sigma\circ\gamma_\flat\circ\eta_X)(x) = (f\circ e)(x) \\
&= f(e(x)).
\end{align*}
On the other hand, $(\zeta_{B(X)}\circ e^\flat)(f) = \zeta_{B(X)}(e^\flat(f)) = \zeta_{B(X)}(f \circ e)$. Since $\zeta_{B(X)}(f \circ e)(\eta_X(x))$ is the real number $\lambda$ satisfying $(f\circ e) + \eta_X(x) = \lambda+\eta_X(x)$ and $\eta_X(x) = \{ g \in B(X) \mid g(x)  = 0\}$, we see that $\lambda = f(e(x))$. Consequently, $\zeta_{B(X)}(f \circ e)(\eta_X(x)) = f(e(x))$. This shows that $\zeta_{B(X)}(f\circ e)$ and $(\sigma\circ \gamma_*)^*(f)$ agree on $\eta_X(X)$. Since $X_{B(X)}$ is dense in $Y_{B(X)}$, we conclude that $(\zeta_{B(X)}\circ e^\flat)(f) = \zeta_{B(X)}(f\circ e) = (\sigma\circ \gamma_*)^*(f)$. This yields the claim that $ \alpha = e^\flat = \widehat{\gamma} \circ \sigma^*$. 
\end{proofclaim}

Since $C \in \ubal$,
we have $\zeta_C : C \to C(Y_C)$ is an isomorphism. Set $\delta = \zeta_C^{-1}\circ \sigma^*$. Then $\delta : C(Y) \to C$ is a morphism
in $\bal$. By the definition of $\widehat{\gamma}$ and  Claim~\ref{claim}, $\alpha = \widehat{\gamma} \circ \sigma^* =  \gamma \circ \zeta_C^{-1}  \circ \sigma^* = \gamma \circ \delta$.
Thus, $\alpha$ is minimal.
\[
\begin{tikzcd}[column sep=5pc]
C(Y) \arrow[dd, bend right = 45, "\delta"'] \arrow[rd, "\alpha"] \arrow[d, "\sigma^*"] & \\
C(Y_C) \arrow[d, "\zeta_C^{-1}"] \arrow[r, "\widehat{\gamma}"] & B(X)\\
C \arrow[ur, "\gamma"'] &
\end{tikzcd}
\]

(2) $\Rightarrow$ (3). Suppose that $X$ is locally compact and $e$ is equivalent to the one-point compactification of $X$. Then $\alpha$
is locally compact by Theorem~\ref{thm: local compactness}. By Lemma~\ref{lem: remainder}, $Z(I_\alpha) = \cl_Y(Y \setminus e[X])$. Since
$X$ is locally compact, $e[X]$ is open in $Y$, so $Z(I_\alpha) = Y \setminus e[X]$. Because $e$ is the one-point compactification,
$Y \setminus e[X]$ is a single point, so $J_\ell(I_\alpha)$ is a maximal $\ell$-ideal.

(3) $\Rightarrow$ (2). Suppose $\alpha$ is locally compact and $J_\ell(I_\alpha)$ is a maximal $\ell$-ideal. Since $\alpha$ is locally compact, $X$ is locally compact by
Theorem~\ref{thm: local compactness}.  Because $J_\ell(I_\alpha)$ is a maximal $\ell$-ideal, $Z_\ell(I_\alpha)$ is a singleton. By Remark~\ref{rem: Z versus Zl}, $Z(I_\alpha)$ is a singleton. Therefore, $Y \setminus e[X]$ is a single point by Lemma~\ref{lem: remainder}.
Thus, $e$ is (equivalent to) the one-point compactification of $X$.
\end{proof}

\begin{corollary}
Let $\alpha : A \to B$ be a non-compact basic extension. The following are equivalent.
\begin{enumerate}
\item $\alpha$ is minimal.
\item $X_B$ is locally compact and $\alpha_\flat$ is equivalent to the one-point compactification of $X_B$.
\item $\alpha$ is locally compact and $J_\ell(I_\alpha)$ is a maximal $\ell$-ideal.
\end{enumerate}
\end{corollary}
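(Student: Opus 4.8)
The plan is to reduce the corollary to Theorem~\ref{thm: 1-point} by replacing $\alpha$ with its reflection $\widehat{\alpha} : \widehat{A} \to B$. The key observation is that $\widehat{\alpha}$ is, up to isomorphism, a compactification-induced basic extension. Indeed, $\widehat{A} = C(Y_A)$ and $\widehat{\alpha} = \zeta_B^{-1}\circ(\alpha_*)^*$, so composing with the isomorphism $\vartheta_B : B \to B(X_B)$ gives $\vartheta_B\circ\widehat{\alpha} = \kappa_B\circ(\alpha_*)^*$, which sends $f \in C(Y_A)$ to $(f\circ\alpha_*)|_{X_B} = f\circ\alpha_\flat$; that is, $\widehat{\alpha}$ is isomorphic to $(\alpha_\flat)^\flat = {\sf E}(\alpha_\flat)$, the extension attached to the compactification $e := \alpha_\flat : X_B \to Y_A$. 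Taking $X = X_B$ and $Y = Y_A$, Theorem~\ref{thm: 1-point} (applicable because, $\alpha$ being non-compact, $X_B$ is non-compact) yields the equivalence of: (i) $\widehat{\alpha}$ is minimal; (ii) $X_B$ is locally compact and $\alpha_\flat$ is equivalent to the one-point compactification of $X_B$; (iii) $\widehat{\alpha}$ is locally compact and $J_\ell(I_{\widehat{\alpha}})$ is a maximal $\ell$-ideal. Since (ii) is verbatim condition (2), the proof reduces to matching (1) with (i) and (3) with (iii).

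For (1) $\Leftrightarrow$ (i) I would use that $\alpha$ and $\widehat{\alpha}$ are compatible, so they admit exactly the same compatible extensions $\gamma : C \to B$ with $C \in \ubal$. If $\alpha$ is minimal, then for such a $\gamma$ minimality gives $\delta : A \to C$ with $\gamma\circ\delta = \alpha$; since $C$ is uniformly complete, $\delta$ extends uniquely to $\widehat{\delta} : \widehat{A} \to C$, and $\gamma\circ\widehat{\delta}$ agrees with $\widehat{\alpha}$ on the norm-dense subalgebra $\zeta_A[A]$, hence everywhere (morphisms in $\bal$ are norm-continuous), so $\widehat{\alpha}$ is minimal. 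Conversely, if $\widehat{\alpha}$ is minimal and $\gamma$ is compatible with $\alpha$, the factoring map $\delta : \widehat{A} \to C$ precomposed with $\zeta_A$ factors $\alpha$. This step is routine given the universal property of the uniform completion.

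For (3) $\Leftrightarrow$ (iii), the equivalence ``$\alpha$ locally compact $\Leftrightarrow$ $\widehat{\alpha}$ locally compact'' is Lemma~\ref{lem: alpha lc iff alpha hat lc}, so it suffices to prove that, assuming local compactness, $J_\ell(I_\alpha)$ is maximal iff $J_\ell(I_{\widehat{\alpha}})$ is. Using that $J_\ell(I)$ is maximal iff $Z_\ell(I)$ is a singleton, and the homeomorphism $Y_{\widehat{A}} \cong Y_A$ induced by $\zeta_A$ together with Remark~\ref{rem: Z versus Zl}, I would reduce this to comparing the zero sets $Z(\zeta_A[I_\alpha])$ and $Z(I_{\widehat{\alpha}})$ inside $Y_A$. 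From $I_\alpha = \zeta_A^{-1}(I_{\widehat{\alpha}})$ (Lemma~\ref{lem: alpha lc iff alpha hat lc}) one has $\zeta_A[I_\alpha] \subseteq I_{\widehat{\alpha}}$, which gives the inclusion $Z(I_{\widehat{\alpha}}) \subseteq Z(\zeta_A[I_\alpha])$ at once.

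The hard part will be the reverse inclusion $Z(\zeta_A[I_\alpha]) \subseteq Z(I_{\widehat{\alpha}})$: one must produce, for each $y \in \alpha_\flat[X_B]$, an element $a \in I_\alpha$ with $\zeta_A(a)(y) \neq 0$, and the subalgebra $\zeta_A[A]$ may be far smaller than $C(Y_A)$, so Urysohn-type constructions are unavailable. Here local compactness is essential: it gives $\alpha(1) = \bigvee\{\alpha(a) \mid a \in I_\alpha,\ a \le 1\}$, and since joins in $B \cong B(X_B)$ are computed pointwise on the isolated points $X_B$, evaluating at $x \in X_B$ with $\alpha_\flat(x) = y$ forces $\zeta_A(a)(y) = \alpha(a)(x) > 0$ for some $a \in I_\alpha$. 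This yields $Z(\zeta_A[I_\alpha]) = Z(I_{\widehat{\alpha}})$, so the two zero sets are singletons together, completing (3) $\Leftrightarrow$ (iii) and hence the corollary.
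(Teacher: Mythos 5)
Your proposal is correct, and its core is the same as the paper's: reduce to Theorem~\ref{thm: 1-point} applied to the compactification $\alpha_\flat : X_B \to Y_A$, whose associated extension is $\widehat{\alpha}$ up to the isomorphism $\vartheta_B$ (an identification the paper does not even spell out), and then show that minimality transfers between $\alpha$ and $\widehat{\alpha}$; your second paragraph is essentially verbatim the paper's proof, which uses compatibility of $\alpha$ with $\widehat{\alpha}$ and reflectivity of $\ubal$ in $\bal$ exactly as you do. Where you genuinely add something is your step (3) $\Leftrightarrow$ (iii): the paper asserts that, by Theorems~\ref{CR duality} and~\ref{thm: 1-point}, only the minimality transfer needs proof, and it never addresses the fact that $J_\ell(I_\alpha)$ is maximal iff $J_\ell(I_{\widehat{\alpha}})$ is. Your zero-set argument supplies this correctly: $Z(I_{\widehat{\alpha}}) \subseteq Z(\zeta_A[I_\alpha])$ since $\zeta_A[I_\alpha] \subseteq I_{\widehat{\alpha}}$, and conversely local compactness gives $1 = \bigvee\{\alpha(a) \mid a \in I_\alpha,\ a \le 1\}$, so evaluating pointwise on $X_B$ (joins in $B \cong B(X_B)$ are pointwise) shows $Z(\zeta_A[I_\alpha])$ misses $\alpha_\flat[X_B]$ and hence lies in $\cl_{Y_A}(Y_A \setminus \alpha_\flat[X_B]) = Z(I_{\widehat{\alpha}})$ by Lemma~\ref{lem: remainder}. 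Incidentally, this transfer holds even without local compactness: every $0 \le c \in I_{\widehat{\alpha}}$ is a norm limit of an increasing sequence $\zeta_A(a_n)$ with $0 \le \zeta_A(a_n) \le c$, whence $a_n \in I_\alpha$ because $I_{\widehat{\alpha}}$ is an $\ell$-ideal (Lemma~\ref{lem: Ialpha}) and $\zeta_A^{-1}(I_{\widehat{\alpha}}) = I_\alpha$; since maximal $\ell$-ideals are norm-closed, every maximal $\ell$-ideal containing $\zeta_A[I_\alpha]$ then contains $I_{\widehat{\alpha}}$, so $Z_\ell(I_\alpha)$ and $Z_\ell(I_{\widehat{\alpha}})$ always correspond under $Y_{\widehat{A}} \cong Y_A$. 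The one caveat, which you share with the paper, is your parenthetical claim that non-compactness of $\alpha$ forces $X_B$ to be non-compact: under the paper's definition this is justified only when $A \in \ubal$ (via condition (C) and Theorem~\ref{thm: compact}), since for $A \notin \ubal$ the extension is automatically ``non-compact'' even if $X_B$ is compact; both your proof and the paper's tacitly read the hypothesis as ``$X_B$ non-compact.''
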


\begin{proof}
By 
Theorems~\ref{CR duality} and~\ref{thm: 1-point}, it is sufficient to show that $\alpha$ is minimal iff $\widehat{\alpha}$ is minimal.
For this first suppose that $\alpha$ is minimal. Let $\gamma : C \to B$ be a basic extension such that $C \in \ubal$ and $\gamma$ is compatible
with $\widehat{\alpha}$. Since $\alpha$ is compatible with $\widehat{\alpha}$, we have that $\gamma$ is also compatible with $\alpha$. So there
is a morphism $\delta : A \to C$ in $\bal$ with $\gamma \circ \delta = \alpha$. Because $C \in \ubal$ and $\ubal$ is a reflective subcategory of $\bal$ (see, e.g., \cite[p.~447]{BMO13a}), there is a
morphism $\widehat{\delta} : \widehat{A} \to C$ in $\bal$ with $\delta = \widehat{\delta} \circ \zeta_A$. \[
\begin{tikzcd}[column sep=5pc]
A \arrow[dr, "\zeta_A"] \arrow[rr, "\alpha"] \arrow[ddr, bend right = 10, "\delta"'] && B \\
&\widehat{A} \arrow[d, "\widehat{\delta}"'] \arrow[ur, "\widehat{\alpha}"] & \\
& C \arrow[uur, bend right = 10, "\gamma"'] &
\end{tikzcd}
\]
Therefore, $\gamma \circ \widehat{\delta} = \widehat{\alpha}$. Thus, $\widehat{\alpha}$ is minimal.

Conversely, suppose that $\widehat{\alpha}$ is minimal and let $\gamma : C \to B$ be a basic extension such that $C \in \ubal$ and
$\gamma$ is compatible with $\alpha$. Then $\gamma$ is compatible with $\widehat{\alpha}$, so there is a morphism
$\delta : \widehat{A} \to C$ in $\bal$ with $\gamma \circ \delta = \widehat{\alpha}$. Therefore,
$\gamma \circ (\delta \circ \zeta_A) = \widehat{\alpha} \circ \zeta_A = \alpha$. Thus, $\alpha$ is minimal.
\end{proof}

\def\cprime{$'$}
\providecommand{\bysame}{\leavevmode\hbox to3em{\hrulefill}\thinspace}
\providecommand{\MR}{\relax\ifhmode\unskip\space\fi MR }
\providecommand{\MRhref}[2]{%
  \href{http://www.ams.org/mathscinet-getitem?mr=#1}{#2}
}
\providecommand{\href}[2]{#2}

\bigskip

Department of Mathematical Sciences, New Mexico State University, Las Cruces NM 88003,
guram@nmsu.edu, pmorandi@nmsu.edu, bruce@nmsu.edu

\end{document}